
\documentclass[12pt]{amsart}
\usepackage{amssymb, latexsym, amsthm}
\usepackage{color}

\newcommand\dist{{\operatorname{dist}}}
\newcommand\G{{\mathcal{G}}}
\newcommand\suchthat{{\,:\ \,}}

\newcommand{\card}[1]{{\left|{#1}\right|}}
\newcommand{\norm}[1]{{\vert\vert {#1} \vert \vert}}
\newcommand\isom{{\,\cong\,}}

\DeclareMathOperator{\Ker}{Ker} %
\newcommand\ideal[1]{{\left<{#1}\right>}}
\newcommand\sg[1]{{\ideal{#1}}}
\def\sub{\subseteq}

\def\F{{\mathbb{F}}}

\def\co{{\,{:}\,}}
\def\s{{\sigma}}
\newcommand{\set}[1]{{\left\{#1\right\}}}

\def\ra{{\rightarrow}}
\def\hra{{\hookrightarrow}}
\def\lra{{\,\longrightarrow\,}}

\def\minusset{{-}}

\renewcommand{\Im}{\operatorname{Im}} %
\renewcommand{\Ker}{\operatorname{Ker}} %
\newcommand\dimcol[2]{{[{#1}\!:\!{#2}]}} %
\newcommand\PGL[1][d]{{\operatorname{PGL}_{#1}}} %
\def\B{{\mathcal B}}

\newcommand\cat[1]{{\mathbf{#1}}}
\newcommand\obj[1][]{{\operatorname{obj}\if!#1!\else(\cat{#1})\fi}}
\newcommand\Obj[1][]{{\operatorname{obj}\if!#1!\else(\cat{#1})\fi}}

\renewcommand\th[1]{{${#1}^{\rm{th}}$}}

\newcommand\eq[1]{{(\ref{#1})}}%

\long\def\second#1\seconded{#1}
\newcommand\Cvec{{C^1(\vec{X})}}
\newcommand\CvecI{{C^1(\vec{X})'}}
\newcommand\CvecII{{C^1(\vec{X})''}}
\newcommand\Fun[2][X]{{F^{#2}(#1)}}
\newcommand\cells[2][X]{{{#1}_{#2}}} 
\newcommand\frst{{$1^{\rm{st}}$}}
\newcommand\scnd{{$2^{\rm{nd}}$}}

\renewcommand\th[1]{{${#1}^{\rm{th}}$}}

\newtheorem{thm}{Theorem}[section] %

\newtheorem{cor}[thm]{Corollary}
\newtheorem{defn}[thm]{Definition}

\newtheorem{exmpl}[thm]{Example}
\newtheorem{lem}[thm]{Lemma}
\newtheorem{prop}[thm]{Proposition}

\newtheorem{rem}[thm]{Remark}

\newcommand\Cref[1]{{Corollary~\ref{#1}}}
\newcommand\Dref[1]{{Definition~\ref{#1}}}

\newcommand\Eref[1]{{Example~\ref{#1}}}
\newcommand\Lref[1]{{Lemma~\ref{#1}}}
\newcommand\Pref[1]{{Proposition~\ref{#1}}}
\newcommand\Rref[1]{{Remark~\ref{#1}}}
\newcommand\Tref[1]{{Theorem~\ref{#1}}}
\newcommand\Trefs[2]{{Theorems~\ref{#1} and~\ref{#2}}}
\newcommand\Crefs[2]{{Corollaries~\ref{#1} and~\ref{#2}}}
\newcommand\Fref[1]{{Figure~\ref{#1}}}
\newcommand\Sref[1]{{Section~\ref{#1}}}
\newcommand\Srefs[2]{{Sections~\ref{#1} and~\ref{#2}}}
\newcommand\Ssref[1]{{Subsection~\ref{#1}}}

\newcommand\one[1][]{{{\bf{1}}_{#1}}}
\renewcommand\Pr[1]{{\operatorname{Pr}\set{#1}}}
\newcommand\vdel{{\vec{\delta}}}

\newcommand\Cube[8]{
\xymatrix@C=8pt@R=8pt{
{#1} \ar@{-}[rrr] \ar@{-}[ddd] \ar@{-}[rd] & {} & {} & {#2}\ar@{-}[ddd] \ar@{-}[ld] \\
{} & {#5} \ar@{-}[r] \ar@{-}[d] & {#6} \ar@{-}[d]& {} \\
{} & {#8} \ar@{-}[r] \ar@{-}[ld] & {#7} \ar@{-}[rd]& {} \\
{#4} \ar@{-}[rrr] & {} & {} & {#3}
}}

\newcommand\CUBEb[9]{%
    \def\tempa{#1}%
    \def\tempb{#2}%
    \def\tempc{#3}%
    \def\tempd{#4}%
    \def\tempe{\boldsymbol{#5}}%
    \def\tempf{#6}%
    \def\tempg{\boldsymbol{#7}}%
    \def\temph{#8}%
    \CUBEcontinued#9
}
\newcommand\CUBE[9]{%
    \def\tempa{#1}%
    \def\tempb{#2}%
    \def\tempc{#3}%
    \def\tempd{#4}%
    \def\tempe{#5}%
    \def\tempf{#6}%
    \def\tempg{#7}%
    \def\temph{#8}%
    \CUBEcontinued#9
}
\newcommand\CUBEcontinued[4]{%
\xymatrix@C=-4pt@R=-4pt{
{\tempa} \ar@{-}[rrrrrr] \ar@{-}[dddddd] \ar@{-}[rrdd] & {} & {} & {} & {} & {} & {\tempb} \ar@{-}[dddddd] \ar@{-}[lldd] \\
{} & {} & {} & *+[o][F-]{#1} & {} & {} & {} \\
{} & {} & {\tempe} \ar@{-}[rr] \ar@{-}[dd] & {} &{\tempf} \ar@{-}[dd]& {} & {} \\
{} & *+[o][F-]{#4} & {} & {} & {} & *+[o][F-]{#2} & {} \\
{} & {} & {\temph} \ar@{-}[rr] \ar@{-}[lldd] & {} & {\tempg} \ar@{-}[rrdd] & {} & {} \\
{} & {} & {} & *+[o][F-]{#3} & {} & {} & {} \\
{\tempd} \ar@{-}[rrrrrr] & {} & {} & {} & {} & {} & {\tempc}
}
}

\usepackage{xy}
\xyoption{all}

\begin{document}

\title{Property testing and expansion in cubical complexes}

\def\UVemail{vishne@math.biu.ac.il}

\author{ David Garber}
\address{Department of Applied Mathematics, Holon Institute of Technology, Israel; and (sabbatical) Einstein Institute of Mathematics, Hebrew University of Jerusalem, Israel}
\email{garber@hit.ac.il}
\author{ Uzi Vishne
}
\address{Department of Mathematics, Bar Ilan University, Ramat Gan, Israel.}
\email{\UVemail}
\thanks{UV is partially supported by Israeli Science Foundation grant \#1623/16.}

\email{ }

\thanks{}

\renewcommand{\subjclassname}{%
      \textup{2000} Mathematics Subject Classification}


\date{\today}

\begin{abstract}
We consider expansion and property testing in the language of incidence geometry, covering both simplicial and cubical complexes in any dimension.
We develop a general method for the transition from an explicit description of the cohomology group, which need not be trivial, to a testability proof with linear ratio between errors. The method is demonstrated by testing functions on $2$-cells in cubical complexes to be induced from the edges.
\end{abstract}
\maketitle


\section{Introduction}

Property testing is a key concept in randomized algorithms and algorithms of sublinear complexity~\cite{OG}. The goal of the test is to distinguish members of a set (``property'') from those at positive fractional distance from it.

To demonstrate this notion, consider symmetric functions $f \co V \times V \ra \set{1, -1}$ where~$V$ is a finite set. Say that such a function is ``special'' if it has the form $f_{ij} = \alpha_i \alpha_j$ for $\alpha \co V \ra \set{1, -1}$.
To efficiently test~$f$ for being special, one verifies that $f_{ij}f_{jk}f_{ki} = 1$ for random indices $i,j,k$. A special function will always pass the test. It is also the case that
if the probability of success is close to~$1$, then $f$ can be well-approximated by some special function.

This example is given in \cite{LK}, where the authors made the significant observation that expansion in simplicial complexes (introduced in \cite{LM} and \cite{Gromov}) is a form of property testing. Indeed, the product along edges of the triangle $\set{i,j,k}$ is an entry of the differential $\delta^1\!f$ associated to the complete simplicial complex, and such entries are computed in constant time.

A somewhat weaker property, that a symmetric function has the form $f_{ij} = \pm \alpha_i \alpha_j$ for a fixed sign, is tested by the product along the square, $f_{ij}f_{jk}f_{k\ell}f_{\ell i} = 1$, see~\cite{Dinur}. Since this is an entry of the cubical differential~$\delta^1f$, one is led to study expansion in cubical complexes. 
We re-prove this result in \Sref{sec:testB1}, to help the reader follow our main application \Sref{sec:testB2}, which is an analogous result in higher dimension:  testing functions defined on squares for being approximated by functions defined on edges, by taking the product along the faces of a cube (see \Sref{sec:3}).

\smallskip

The main contributions of the paper are:
\begin{enumerate}
\item We cast property testing and expansion into the general framework of cohomology on incidence geometry. This covers expansion in simplicial or cubical complexes, in any dimension (Subsection~\ref{ss:34}).
\item Expansion is a form of property testing (\Tref{LK}).
\item Computing the first and second cohomology of the complete cubical complex (Sections~\ref{sec:4} and \ref{sec:5}), by a delicate analysis of non-symmetric functions on the edges.
\item Testing functions on squares to be defined by edges (Sections~\ref{sec:3} and~\ref{sec:testB2}).
\item A general technique to bound the expansion constant in a cohomological setting, which is necessary when the cohomology is nonvanishing.
(\Sref{sec:prel}).
\item Outline of a proof for testability which should deal with the analogous statements in any dimension (\Sref{sec:9}).
\end{enumerate}

Section~\ref{s:cc} briefly introduces cubical complexes, on which our two examples are based.

\medskip
We thank Roy Meshulam, Ilan Newman, %
and an anonymous referee for their helpful comments.

\section{Testing functions on $2$-cells}\label{sec:3}

This section describes our main application %
in simple terms.
Let~$V$ be a finite set. 
We consider functions from $V^4 = V\times V\times V\times V$ to $\mu_2 = \set{1, -1}$.
Can such a function~$g$ be written in the form
\begin{equation}\label{special2}
g_{ijk\ell} = \pm f_{ij}f_{kj}f_{k\ell}f_{i \ell},
\end{equation}
where $f \co V \times V \ra \mu_2$ (in this order of the indices)?

The symmetric group $S_4$ acts on the set of functions $V^4 \ra \mu_2$ by permuting the indices.
An obvious necessary condition for \eq{special2} is that~$g$ be symmetric under the subgroup $\sg{(13),(24)}$. A necessary condition for \eq{special2} to hold for a {\it{symmetric}} function~$f$ is that~$g$ is symmetric under the action of the dihedral group $D_4$ on the indices. Namely, such functions are defined on {\it{squares}} over~$V$.
It is not hard to see that if $g$ is defined on squares, and has the form~\eq{special2}, then the product of the values of $g$ over the faces of any cube is~$1$. We call this the {\bf{cube condition}}. In \Cref{Z2B2} we show that every function~$g$ satisfying the cube condition (for all cubes) is of the form \eq{special2} for some~$f$ (not necessarily symmetric).

\smallskip
A probabilistic analog follows, showing that the cube condition tests a function $g$ defined on squares for being of the form~\eq{special2}:
\begin{thm}\label{maincomb}
There is a constant $\omega > 0$ such that if a function $g$ on the squares fails the cube condition with probability at most~$p$, then~$g$ can be approximated by a function of the form~\eq{special2}, with an error rate of at most~$\omega^{-1}p$.
\end{thm}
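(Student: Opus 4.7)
The plan is to cast \Tref{maincomb} as a cohomological expansion statement on the complete cubical complex over~$V$ and deduce it from \Tref{LK}, which converts expansion into property testing.

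First, I would set up the cohomology. Regard $g$ as a $\mu_2$-valued $2$-cochain and (non-symmetric) functions $f$ as $1$-cochains; then the right-hand side of \eq{special2} is exactly $\delta^1 f$ times a global sign, so the functions of the form \eq{special2} form the set $B^2\cdot\set{\pm 1}$, where $B^2 = \Im(\delta^1)$. The cube condition reads $\delta^2 g = 1$, defining the cocycles $Z^2$, and \Cref{Z2B2} is the equality $Z^2 = B^2\cdot\set{\pm 1}$, equivalently $H^2 \isom \mu_2$ represented by the constant cochain $-1$. The distance from $g$ to a function of the form \eq{special2} therefore coincides with $\dist(g, Z^2)$.

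Second, I would invoke \Tref{LK}: a positive expansion constant $\omega$ for $\delta^2$, in the sense that
\[
\dist(g, Z^2) \leq \omega^{-1}\cdot\norm{\delta^2 g}
\]
for every $g \in C^2$, immediately yields the conclusion, since $\norm{\delta^2 g}$ is exactly the fraction of cubes on which $g$ fails the cube condition and is bounded by $p$. This reduces \Tref{maincomb} to producing a positive lower bound on~$\omega$.

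The third and main step is the lower bound on~$\omega$. Because $H^2$ is nontrivial, a near-cocycle $g$ can a priori resemble $\delta^1 f$ on some regions and $-\delta^1 f$ on others, so the naive strategy of reconstructing $f_{uv}$ from a single majority vote over the squares through an edge $(u,v)$ can fail. I would follow the method of \Sref{sec:prel}: use the explicit descriptions of $H^1$ and $H^2$ from \Srefs{sec:4}{sec:5} to argue that any near-cocycle commits globally to a single coset of $B^2$ in $Z^2$, thereby reducing to a near-coboundary problem where local majority arguments apply, and the high connectivity of the complete cubical complex concentrates reconstruction errors linearly in~$p$. This dichotomy step — ruling out fractional mixtures of the two cosets — is the principal technical hurdle, and it is exactly what the machinery of \Sref{sec:prel} is designed to supply.
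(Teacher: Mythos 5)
Your first two steps coincide with the paper's framing: the set of functions on squares of the form \eq{special2} is $\pm B^2(\vec{X})$, which by \Cref{Z2B2} equals $Z^2(X)$, and via \Dref{Testdef}/\Tref{LK} the theorem is equivalent to an expansion inequality $\min_{z \in Z^2(X)} \norm{gz} \leq \omega^{-1} \norm{\delta^2 g}$ for the complete $3$-dimensional cubical complex. (One notational caution: what you call ``$H^2 \isom \mu_2$'' is the quotient $Z^2(X)/B^2(\vec{X})$; the actual cohomology $H^2(X) = Z^2(X)/B^2(X)$ is $\mu_2 \times \mu_2$ by \Tref{H2=}, because allowing non-symmetric $f$ genuinely enlarges the boundary space.)

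The gap is your third step, which is the entire content of the theorem and is left as intent rather than argument; moreover the machinery of \Sref{sec:prel} does not by itself supply it. Two concrete ingredients are missing. First, the coset dichotomy: the paper pins down which coset of $B^2(\vec{X})$ (resp.\ $B^2(X)$) a near-cocycle commits to by means of the explicit detecting maps $\Delta'$ and $\Delta''$ of \Ssref{ss:Delta}, and the crucial point is that $\Delta' g \times \Delta' g$ and $\Delta'' g \times \Delta'' g$ are products of a bounded number of entries of $\delta^2 g$ (\Pref{13.9o} and \Pref{X3}, proved by cancellation over specific configurations of cubes); only then do the generic lemmas of \Sref{sec:prel} (\Lref{goon}, \Lref{half}) give $\Delta' g \sim_{3p} \theta$ and $\Delta'' g \sim_{6p} \pi$ (\Lref{wefindm}) and allow replacing $g$ by $\theta[\pi]g$. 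You exhibit no such formal functionals, and ``high connectivity'' alone does not produce them. Second, the reconstruction: it is not a local majority vote over the squares through an edge (errors can congregate, which is exactly why \Lref{manycond} exists). The paper fixes two good vertices $a_0, b_0$ by \Lref{manycond}, sets $h_{ij} = g_{a_0 i b_0 j}$, invokes the degree-one testing theorem (\Tref{MainC1}) to get $h \sim \delta^0 \beta$, defines $f'_{ij} = \beta_i g_{a_0 b_0 i j}$, shows $f'$ is approximately symmetric (using $\Delta'$ again), symmetrizes to $f \in C^1(X)$, and verifies $g \sim_{rp} \theta[\pi]\delta^1 f$ through a chain of identities reusing $\Delta'$ and $\Delta''$; this is \Tref{MainC2}, and it is the sole source of the constant ($r < 1504$, i.e.\ $\omega \geq 1/1504$). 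Without these two steps your proposal yields no lower bound on $\omega$, hence no proof of the statement.
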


Insisting on $f$ being symmetric poses a problem, because not every function on squares satisfying the cube condition is of the form~\eq{special2} with~$f$ symmetric. However, in~\Ssref{ss:[-1]} we define a function $[-1]$ (which equals $-1$ for exactly~$\frac{2}{3}$ of the squares), and then we have:
\begin{thm}\label{maincomb2}
There is a constant $\omega>0$ such that if a function $g$ on the squares fails the cube condition with probability at most~$p$, then~$g$ or $[-1]g$ can be approximated by a function of the form~\eq{special2} with a symmetric~$f$, with an error rate of at most~$\omega^{-1}p$.
\end{thm}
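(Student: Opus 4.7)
The plan is to derive \Tref{maincomb2} from \Tref{maincomb} by analyzing the obstruction to symmetrizing the function~$f$ that the latter supplies.

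First, I would apply \Tref{maincomb} to~$g$ to obtain a function $f\co V\times V\to\mu_2$, a priori non-symmetric, such that $\tilde g_{ijk\ell}:=f_{ij}f_{kj}f_{k\ell}f_{i\ell}$ agrees with~$g$ off a fraction $\e\le\omega^{-1}p$ of the squares. To measure the asymmetry of~$f$, set $h_{ij}:=f_{ij}f_{ji}\in\mu_2$. A short calculation shows that on any square for which $\tilde g$ is invariant under the rotation $(i,j,k,\ell)\mapsto(j,k,\ell,i)$, the identity $h_{ij}h_{jk}h_{k\ell}h_{i\ell}=1$ must hold; the other generators of $D_4$ yield only automatic identities. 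Since~$g$ is genuinely on squares and $\tilde g$ is close to~$g$, this cocycle condition is satisfied on all but a fraction $O(\e)$ of squares, so~$h$ is an approximate $1$-cocycle in the cochain complex of the complete cubical complex studied in \Sref{sec:4}.

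Next, I would invoke the $H^1$ computation of \Sref{sec:4} together with the linear-ratio expansion machinery of \Sref{sec:prel} to find a genuine cocycle~$h'$ at distance $O(\e)$ from~$h$, lying in one of the two cohomology classes of $H^1$. If $h'_{ij}=a_ia_j$ is a coboundary, then $f'_{ij}:=f_{ij}a_j$ is symmetric---because $f'_{ij}f'_{ji}=h'_{ij}a_ia_j=1$---and its coboundary via \eq{special2} equals~$\tilde g$, so the symmetric~$f'$ approximates~$g$. If instead~$h'$ represents the nontrivial class, I would fix once and for all a non-symmetric reference function~$e$ on $V\times V$ with $\delta^1 e=[-1]$ (which exists by~\Cref{Z2B2}, since~$[-1]$ from \Ssref{ss:[-1]} satisfies the cube condition) and whose asymmetry $e_{ij}e_{ji}$ necessarily represents the nontrivial $H^1$ class---otherwise one could symmetrize~$e$ and realize~$[-1]$ as a coboundary of a symmetric function, contradicting its nontriviality in the symmetric~$H^2$. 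Writing $h'_{ij}=e_{ij}e_{ji}\cdot a_ia_j$ and taking $f'_{ij}:=f_{ij}e_{ij}a_j$, a direct check shows~$f'$ is symmetric and $\delta^1 f'=\tilde g\cdot[-1]$, so now it is~$[-1]g$ that is approximated, again at error $O(\e)$.

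The main obstacle is the cohomological input: the explicit $H^1$ and $H^2$ computations of Sections~\ref{sec:4} and~\ref{sec:5}, the identification of $[-1]$ with the nontrivial $H^2$ generator via~\Ssref{ss:[-1]}, and the linear-ratio expansion bound for $1$-cochains supplied by~\Sref{sec:prel}. Once these are in place, the remainder is bookkeeping, since each correction step (from~$h$ to~$h'$, and from~$f$ to~$f'$) alters the error by at most a constant multiplicative factor, absorbed into the final~$\omega$.
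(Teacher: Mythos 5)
Your strategy---deducing \Tref{maincomb2} from \Tref{maincomb} by tracking the asymmetry $h=Nf$ of the approximating $f$, locating its class in $H^1$ of the complete $2$-dimensional complex, and twisting $f$ by a coboundary (resp.\ by a reference asymmetric function) to symmetrize it---is genuinely different from what the paper does. The paper never reduces one theorem to the other: both are restated as \Crefs{2.1forreal}{2.2forreal} and deduced from \Tref{MainC2} (via \Cref{finally1}), whose proof works directly on $g$ with the detecting maps $\Delta',\Delta''$, corrects $g$ by $\theta[\pi]$ at the outset, and then builds a symmetric $f$ outright by fixing good vertices $a_0,b_0$. Your reduction is more modular (it treats \Tref{maincomb} as a black box and only adds the $H^1$-structure of \Sref{sec:4} plus a symmetrization), while the paper's route yields the explicit constant and both statements simultaneously. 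Note also a simplification available to you: in its precise form (\Cref{2.1forreal}) the approximating function is $\pm\vdel^1 f$ with $f\in\CvecI$, so $h=Nf$ lies in $Z^1(X)$ \emph{exactly}; then \Tref{C1t}, \Pref{64}, \Pref{deltaC1''} and \Rref{thisone} finish the argument with no approximate-cocycle correction and no growth of the error at that stage. The machinery of \Sref{sec:prel} and \Tref{MainC1} is only needed if you insist on taking $f$ to be an arbitrary, not necessarily rotation-compatible, function on ordered pairs.

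Two local claims in your write-up need repair. First, since $Nf=h$ is only \emph{close} to the exact coboundary $h'=\delta^0 a$, your $f'_{ij}=f_{ij}a_j$ satisfies $f'_{ij}f'_{ji}=h_{ij}a_ia_j$, not $h'_{ij}a_ia_j$; so $f'$ is symmetric only outside an $O(\epsilon)$ fraction of edges, and its $\vdel^1$ equals $\vdel^1 f$ but is not yet of the form $\delta^1$ of a symmetric function. You must replace $f'$ by a genuinely symmetric function agreeing with it on the symmetric pairs (each square involves four uniformly distributed edges, so this costs only another constant factor); this is exactly the step performed inside the paper's proof of \Tref{MainC2}, and it should be stated rather than absorbed silently into ``bookkeeping.'' Second, there is no $e\in\CvecI$ with $\vdel^1 e=[-1]$: by \Tref{H2=} one has $[-1]\notin B^2(\vec{X})$, and \Cref{Z2B2} only gives $[-1]\in\pm B^2(\vec{X})$; the correct reference function is the order function $\psi$, with $\vdel^1\psi=-[-1]$ (\Rref{thisone}) and $N\psi=-\one$ representing the nontrivial class of $H^1$. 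Since \eq{special2} carries a free sign, this slip does not affect the conclusion, but your existence argument for $e$ as stated is incorrect.
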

Namely, the cube condition on $g$ tests for the property that $g$ or $[-1]g$ are of the
form~\eq{special2} with $f$ symmetric.

\smallskip

A more precise formulation of \Trefs{maincomb}{maincomb2} is given in \Crefs{2.1forreal}{2.2forreal}. The proof is based on the description of the space of ``directed boundaries''~$B^2(\vec{X})$ %
in \Sref{sec:5}, and follows from \Cref{finally1}.

\section{Expansion and property testing}\label{sec:2}

After a brief introduction to testing and to incidence geometry (see \cite{Ueberberg}), we phrase the notions of expansion and property testing in the language of incidence geometry. This will naturally lead to the observation that a lower bound on the expansion constant proves testability with linear ratio of errors.

\subsection{Testing as an algorithm}

We briefly recall the standard definition of a testable property, with minor adjustments. For a set~$X$, let~$C(X)$ denote the space of functions $X \ra \mu_2 = \set{1,-1}$.
The normalized Hamming distance is defined by $\dist(f,g) = \norm{fg}$ where
$$\norm{f} = \Pr{f(x) \neq 1}$$
for $x \in X$ chosen uniformly at random.

Let $X$ be a set. An {\bf{$\epsilon$-test}} for a subset $P \sub C(X)$ is a randomized algorithm with a constant number~$q$ of queries whose input is $f \in C(X)$ and whose output is YES with probability $\geq 2/3$ if $f \in P$, and NO with probability $\geq 2/3$ if $\dist(f,P) \geq \epsilon$. The set $P$ is {\bf{testable}} if it has an $\epsilon$-test for every $\epsilon>0$. Here $\dist(f,P) = \min_{p \in P}\dist(f,p)$.

We are more interested in one-sided tests. A {\bf{one-sided $(\epsilon,\eta)$-test}} is a randomized algorithm with~$q$ queries whose input is $f \in C(X)$ and whose output is YES with probability $1$ if $f \in P$, and NO with probability $\geq \eta$ if $\dist(f,P) \geq \epsilon$.

We consider one-sided tests obtained from the following scheme. Let~$Y$ be a set, and $\delta \co C(X)\ra C(Y)$ a function such that each entry of $\delta f$ is a product of a bounded number $q$ of entries of~$f$ (\Dref{firstformaldef} elaborates on this idea).
\begin{rem}\label{howtotest}
Let $P = \Ker(\delta)$. Suppose $\dist(f,P) \geq \epsilon$ implies $\norm{\delta f} \geq \eta$.
Then, verification that $(\delta f)_y = 1$ for a random $y \in Y$ is a one-sided $(\epsilon,\eta)$-test for~$P$.
\end{rem}
The expansion version of a test follows:
\begin{rem}
If $\norm{\delta f} \geq \omega \cdot \dist(f,P)$, then
verification that $(\delta f)_y = 1$ for a random $y \in Y$ is a one-sided $(\epsilon,\omega \epsilon)$-test for~$P$.
\end{rem}

\subsection{Incidence geometries}

The incidence geometry introduced here is used to place $X$, $Y$ and $\delta$ from the previous subsection in a unified framework.

A {\bf{pre-geometry}} is a set of elements with prescribed types, with an {\bf{incidence relation}} $\preceq$ which is a reflexive and symmetric ({\it{sic}}) binary relation such that distinct elements of the same type are not incident in each other. As usual, $x \prec y$ is a shorthand for $x \preceq y$ and $x \neq y$.
A set of elements incident in each other is a {\bf{flag}}. A {\bf{geometry}} is a pre-geometry in which every flag is contained in a flag with one element of every type.

Let $\G$ be a pre-geometry with three types, say $0$, $1$ and $2$. Write $\G = \G^0 \cup \G^1 \cup \G^2$, where $\G^i$ is the set of elements of type~$i$. We say that~$\G$ is {\bf{even}} if for every $x \in \G^0$ and $z \in \G^2$, the number of $y \in \G^1$ for which $x \prec y \prec z$ is even, and (somewhat diverging from standard terminology) {\bf{thin}} if this number is always~$0$ or $2$. Every thin pre-geometry is even.
\begin{exmpl}\label{simple}
Let $(\G,\prec)$ be a pre-geometry with three types. Let $\prec_{ij}$ denote the restriction of $\prec$ to $\G^i \times \G^j$. If $\prec_{01}$ and $\prec_{12}$ have full domain and range, and $\prec_{02}\,=\, \prec_{12} \circ \prec_{01}$,  then $\G$ is a geometry.
\end{exmpl}

We denote $\mu_2 = \set{1,-1}$. As usual, $C^i(\G,\mu_2) = C^i(\G)$ is the space of {\bf{cochains}}, namely functions $\G^i \ra \mu_2$, which is a group under pointwise multiplication. The constant functions, in any type, will be denoted~$\one$ and $-\one$.~For $i = 0,1$ we define the differentials $\delta^i \co C^i(\G) \ra C^{i+1}(\G)$ by
$$(\delta^i f)(y) = \prod_{x \prec y} f(x)$$ for every $y \in \G^{i+1}$, where the product is over $x \in \G^i$ such that $x \prec y$.

Let $Z^1(\G) \sub C^1(\G)$ be the kernel of $\delta^1$; elements of $Z^1(\G)$ are usually called {\bf{cocycles}}. Let $B^1(\G) \sub C^1(\G)$ be the image of~$\delta^0$; these are the {\bf{coboundaries}}. For $f \in C^0(\G)$ we have that $(\delta^1\delta^0f)(z) = \prod_{x \prec y \prec z} f(x)$, so if~$\G$ is even then~$\delta^1 \delta^0 = 0$, and than $B^1(\G) \sub Z^1(\G)$. The {\bf{cohomology group}} of an even geometry~$\G$ is the quotient group $H^1(\G) = Z^1(\G)/B^1(\G)$.

\begin{exmpl}\label{ig}
Let~$X$ be a simplicial complex.
For a fixed $d \geq 0$,
the {\bf{\th{d} incidence geometry of $X$}} is
the geometry $\G$ in which
$\G^i$ is the set of $(d-1+i)$-cells of $X$ ($i=0,1,2$), with (symmetrized) inclusion as the incidence relation. This is a thin geometry.
The cohomology~$H^1(\G)$ is then the simplicial cohomology group $H^d(X)$.

Taking~$X$ to be a cubical complex (see \Sref{s:cc} below) works just as well.
\end{exmpl}

Recall that for $f \in C^{i}(\G)$, we denote
$\norm{f} = \Pr{f(x) \neq 1}$ where $x \in \G^i$ is chosen uniformly at random.
\begin{defn}\label{simdef}

For $f \in C^1(\G)$, we denote the coset $[f] = f \cdot B^1(\G)$ and
$$\norm{[f]} = \min_{f' \in [f]}{\norm{f'}}.$$
\end{defn}

The {\bf{degree}} of $z \in \G^2$ is the number of $y \in \G^1$ incident to~$z$. Most often, $\G$ represents an infinite series of geometries, and is not a fixed object. We say that $\G$ is {\bf{bounded}} if there is some fixed $q$ such that $\deg(z) \leq q$ for all $z \in \G^2$ (so, for example, ``the'' complete $2$-dimensional simplicial complex on $n$ vertices, for arbitrary~$n$, ``is'' bounded with $q = 3$ because every triangle has three edges). Under this assumption, the computation of each entry of $\delta^1 g$ requires a bounded number of queries on~$g$.

\smallskip
Since the coefficients are in a field, the short exact sequence
\begin{equation}\label{ses}
\xymatrix{1 \ar@{->}[r] & B^1(\G) \ar@{^(->}[r] & Z^1(\G) \ar@<0pt>@{->}[r]^{\theta} & H^1(\G) 
\ar@{->}[r] &1}
\end{equation}
splits, and $Z^1(\G) \isom B^1(\G) \times H^1(\G)$.  A subspace $H \leq Z^1(\G)$ will be called  {\bf{independent}} if $B^1(\G) \cap H = 0$, equivalently if the restriction $\theta|_{H} \co H \ra H^1(\G)$ is an injection.

\subsection{Testing}

Let $\G$ be a bounded even incidence geometry on the three types $0,1,2$. We specialize \Rref{howtotest} to $\delta^1 \co C^1(\G) \ra C^2(\G)$.
\begin{defn}\label{Testdef}
Let $H \leq Z^1(\G)$ be an independent subspace.
If for every $g \in C^1(\G)$ there is $\alpha \in H$ for which
$$\norm{[g \cdot \alpha]} \leq \omega^{-1}\norm{\delta^1 g}$$
for some (typically small) constant $\omega > 0$, then verification that $(\delta^1 f)_z = 1$ for a random $z \in \G^2$ is a one-sided $(\epsilon,\omega \epsilon)$-test for~$B^1(\G) \cdot H$ for every~$\epsilon$.
When this is the case, we say that
the differential  $\delta^1$ {\bf{tests $B^1(\G)\cdot H$}}, and $B^1(\G)\cdot H$ is {\bf{testable}} (with ratio $\omega$).
\end{defn}
(The condition depends on $H$ only through the product $B^1(\G)\cdot H$.)

In other words, $\delta^1$ tests the space $B^1(\G) \cdot H$ if whenever $\delta^1g$ is nearly~$\one$, the function~$g$ can be ``corrected'' by an element of $H$ so that it is nearly of the form $\delta^0 f$ for some $f \in C^0(\G)$. From an algorithmic perspective, this means that after testing the equality $(\delta^1g)_x = 1$ for a relatively small number of cells $x \in \G^2$, we may conclude that up to $H$, $g$ can be well-approximated in the form $\delta^0 f$, where the quality of the approximation improves as $\omega$ increases. The correction by an element of $H$ is necessary precisely because not every element of $Z^1(\G)$ is of the form $\delta^0f$. For this reason, $\delta^1$ can only test $B^1(\G) \cdot H$ when $H \isom H^1(\G)$.

\begin{rem}\label{trivial8}
Assume $H \isom H^1(\G)$. If $g \in Z^1(\G)$, then $\alpha = \psi \theta(g)$, where $\psi \co H^1(\G) \ra H \sub Z^1(\G)$ splits \eq{ses},  satisfies $\norm{[g \cdot \alpha]} = 0$, so
the requirement in \Dref{Testdef} holds trivially for such~$g$.
\end{rem}

In the case when $H^1(\G) = 0$, $B^1(\G)$ is testable
if
for every $g \in C^1(\G)$ we have that $\norm{[g]} \leq \omega^{-1}\norm{\delta^1 g}$. This is essentially the definition of membership testability in \cite[Defn.~3]{LK}, where we consider the number of errors in the function $\delta^1g$ rather than the probability of a $q$-query algorithm to fail to recognize that $g \not \in B^1(\G)$.

\subsection{The expansion constant}\label{ss:34}

Again let $\G$ be a bounded even incidence geometry on the three types $0,1,2$.
\begin{defn}\label{Expdef}
The {\bf{expansion constant}} of~$\G$ with respect to an independent subspace $H \leq Z^1(\G)$ is
$$\omega_H(\G) = \min_g \max_{\alpha \in H} \frac{\norm{\delta^1g}}{\norm{[g \cdot \alpha]}}$$
where the external minimum is taken over all functions $g \in C^1(\G)$ for which $g \not \in Z^1(\G)$.
\end{defn}
As with simplicial complexes, we say that a family of incidence geometries is a {\bf{family of expanders}} if their expansion constant is bounded away from zero.
Again, when $H^1(\G) = 0$,
$$\omega(\G) = \min_g \frac{\norm{\delta^1g}}{\norm{[g]}}$$
is the coboundary expansion constant as defined in \cite[Defn.~1]{LK} (and the references therein).
On the other hand when $H \isom H^1(\G)$, we obtain the cosystolic expansion constant appearing in \cite{Gromov} (called $\F_2$-cocycle expansion in \cite[Defn.~1.4]{LKK}).
We comment that in this case the expansion constant can also be viewed as the operator norm of the inverse map $(\delta^1)^{-1} \co B^2(\G) \ra C^1(\G)/Z^1(\G)$. 

\medskip

The following result, that expansion implies testability, generalizes \cite[Thm.~8]{LK} (where it is proved for $H = 0$).
\begin{thm}\label{LK}
Let $H \leq Z^1(\G)$ be an independent space as above. Let $\omega_H(\G)$ be the expansion constant of~$\G$ with respect to~$H$. Let $\omega > 0$ be a constant. Then~$\delta^1$ tests the space $B^1(\G) \cdot H$ with ratio~$\omega$, if and only if $\omega \leq \omega_H(\G)$.
\end{thm}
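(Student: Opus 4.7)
The plan is to prove both directions of the equivalence by unwinding the two definitions and matching them up: once the degenerate case $g \in Z^1(\G)$ is handled separately, the testability condition and the inequality $\omega \leq \omega_H(\G)$ become literally the same quantitative statement over the remaining $g$. The one subtle point --- and the main thing requiring care --- is the case $g \in Z^1(\G)$: since $\norm{\delta^1 g} = 0$ then, the testability definition forces the existence of $\alpha \in H$ with $g\alpha \in B^1(\G)$, i.e.\ $Z^1(\G) \subseteq B^1(\G)\cdot H$, which together with the independence of $H$ yields the implicit hypothesis $H \isom H^1(\G)$ already flagged before \Rref{trivial8}.

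For the forward implication, assume $\omega \leq \omega_H(\G)$ and fix $g \in C^1(\G)$; the goal is to find $\alpha \in H$ with $\norm{[g\alpha]} \leq \omega^{-1}\norm{\delta^1 g}$. If $g \in Z^1(\G)$, then both sides must vanish and \Rref{trivial8} supplies the desired $\alpha$. If $g \notin Z^1(\G)$, then $g \notin B^1(\G)\cdot H$, so $\norm{[g\alpha]} > 0$ for every $\alpha \in H$, and the definition of $\omega_H(\G)$ as a minimum over exactly such $g$ gives
\[
\max_{\alpha \in H} \frac{\norm{\delta^1 g}}{\norm{[g\alpha]}} \;\geq\; \omega_H(\G) \;\geq\; \omega,
\]
so a maximizing $\alpha$ satisfies the required inequality.

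For the converse, assume $\delta^1$ tests $B^1(\G)\cdot H$ with ratio $\omega$ and let $g \notin Z^1(\G)$. By testability there is $\alpha \in H$ with $\norm{[g\alpha]} \leq \omega^{-1}\norm{\delta^1 g}$; both $\norm{\delta^1 g}$ and $\norm{[g\alpha]}$ are strictly positive (as $g$ lies in neither $Z^1(\G)$ nor $B^1(\G)\cdot H$), so this rearranges to $\frac{\norm{\delta^1 g}}{\norm{[g\alpha]}} \geq \omega$. Taking the maximum over $\alpha \in H$ and then the minimum over such $g$ yields $\omega_H(\G) \geq \omega$, as required. No further machinery is needed: the min and max are attained because $C^1(\G)$ and $H$ are finite, and once the identification $H \isom H^1(\G)$ is in hand the whole proof is bookkeeping.
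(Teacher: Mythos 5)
Your proof is correct and takes essentially the same route as the paper's: the degenerate case $g \in Z^1(\G)$ is handled via \Rref{trivial8} (with the implicit hypothesis $H \isom H^1(\G)$, on which the paper's proof also relies), and for $g \notin Z^1(\G)$ the testability condition is unwound into the statement $\omega \leq \min_g \max_{\alpha \in H} \norm{\delta^1 g}/\norm{[g\cdot\alpha]} = \omega_H(\G)$. Your explicit attention to the strict positivity of $\norm{[g\cdot\alpha]}$ and to why testability forces $Z^1(\G) \sub B^1(\G)\cdot H$ is merely a more careful write-up of the same bookkeeping.
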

\begin{proof}
By \Rref{trivial8}, $\delta^1$ tests
the space $B^1(\G) \cdot H$ with ratio~$\omega$ if
for every $g \in C^1(\G) \minusset Z^1(\G)$ there is some $\alpha \in H$ for which $\omega \leq \frac{\norm{\delta^1 g}}{\norm{[g \cdot \alpha]}}$.
In other words,
for every $g \in C^1(\G) \minusset Z^1(\G)$, $\omega \leq \max_{\alpha \in H} \frac{\norm{\delta^1 g}}{\norm{[g \cdot \alpha]}}$. But this condition is precisely saying that $\omega \leq \min_g \max_{\alpha \in H} \frac{\norm{\delta^1g}}{\norm{[g \cdot \alpha]}} = \omega_H(\G)$.
\end{proof}

Let us demonstrate the language of incidence geometry by casting the classical linearity test \cite{Blum1993} in this form.
\begin{exmpl}\label{BLR}
Let~$V$ be a vector space over~$\F_2$, of finite dimension $\geq 2$. Blum, Luby and Rubinfeld \cite{Blum1993} showed in their 1993 foundational paper  that a single condition $f(x+y)=f(x)+f(y)$ tests a function $f \co V \ra \F_2$ for linearity:
the probability that a random condition fails is proportional to the distance of~$f$ from the space $\operatorname{Hom}(V,\F_2)$ of linear functions.

Let us construct an incidence geometry $\G$ for which $\delta^1$ realizes this test. As $\G^0$ we take a basis of the dual space of $V^*$. We take $\G^1 = V - \set{0}$, and $\G^2 = \set{\set{a,b,c} \sub V \suchthat a+b+c=0}$. Then $C^1(\G)$ can be identified with functions $f \co V \ra \F_2$ satisfying $f(0) = 0$.
With the notation of \Eref{simple}, we set
$\varphi\!\!\!\;\prec_{01}\!v$ if $\varphi(v) = 1$; $v\!\prec_{12}\!\ell$ if $v \in \ell$; and $\prec_{02}\, =\, \prec_{12} \circ \prec_{01}$.
It follows that $\G = \G^0\cup \G^1\cup \G^2$ is a thin geometry. For $\alpha \in C^0(\G)$ we have that $(\delta^0\alpha)_v = \sum_{\varphi \prec v} \alpha_\varphi = \sum_\varphi \alpha_\varphi\varphi(v)$ so that $\delta^0\alpha = \sum_\varphi \alpha_\varphi \varphi$, which shows that $B^1(\G)$ are precisely the linear functions. It also follows that $H^1(\G) = 0$. Since $(\delta^1f)_\set{a,b,c}  = f(a)+f(b)+f(c)$, $\delta^1$~is the Blum-Luby-Rubinfeld linearity test.
\end{exmpl}

As another example,
consider the $3$-dimensional Bruhat-Tits building $\B = \tilde{A}_3(F)$ associated with $\PGL[4](F)$, where $F$ is a local field. In \cite[Theorem~1.8]{LKK} the authors proved that the family of Ramanujan non-partite quotients of $\B$ is a family of expanders. As a corollary, we now have:
\begin{cor}
Let $X$ be a non-partite Ramanujan quotient of the building $\B$.
Then $\delta^2 \co C^2(X) \ra C^3(X)$ tests the space $Z^2(X)$.
\end{cor}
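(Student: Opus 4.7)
The plan is to cast the statement directly in the language of incidence geometry developed in \Sref{sec:2} and reduce it to an immediate invocation of \Tref{LK} together with the expansion theorem cited from \cite{LKK}.

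First I would set up the relevant geometry. Following \Eref{ig} with $d=2$, let $\G$ be the second incidence geometry of $X$, whose types $0,1,2$ consist respectively of the $1$-, $2$-, and $3$-cells of $X$, with (symmetrized) inclusion as the incidence relation. Since $X$ is a cubical complex and hence locally finite of bounded local structure once we pass to a quotient, $\G$ is a bounded thin geometry, and therefore even. Under this dictionary, the differential $\delta^0 \co C^0(\G) \ra C^1(\G)$ is precisely $\delta^1 \co C^1(X) \ra C^2(X)$ of $X$, and $\delta^1 \co C^1(\G) \ra C^2(\G)$ is precisely the map $\delta^2 \co C^2(X) \ra C^3(X)$ we want to analyze. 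In particular $B^1(\G) = B^2(X)$, $Z^1(\G) = Z^2(X)$, and $H^1(\G) = H^2(X)$.

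Next I would choose, using the splitting \eq{ses} of the short exact sequence over $\F_2$, an independent subspace $H \leq Z^1(\G) = Z^2(X)$ with $H \isom H^1(\G) = H^2(X)$. With this choice, $B^1(\G)\cdot H$ is all of $Z^1(\G) = Z^2(X)$. Thus testing the property $Z^2(X)$ by $\delta^2$ is exactly the same as $\delta^1$ (of $\G$) testing $B^1(\G)\cdot H$ in the sense of \Dref{Testdef}.

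It now suffices, by \Tref{LK}, to know that the expansion constant $\omega_H(\G)$ is bounded below by some positive constant $\omega > 0$ uniformly as $X$ ranges over the Ramanujan non-partite quotients of $\B$. But as noted immediately before \Tref{LK}, when $H \isom H^1(\G)$ the quantity $\omega_H(\G)$ is exactly the cosystolic ($\F_2$-cocycle) expansion constant in degree~$2$, which is precisely what Theorem~1.8 of \cite{LKK} supplies for non-partite Ramanujan quotients of the building $\tilde A_3(F)$. Plugging this lower bound into \Tref{LK} yields that $\delta^1$ of $\G$ tests $B^1(\G)\cdot H = Z^2(X)$, which is the assertion of the corollary.

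The only step that requires any care is the bookkeeping in the first paragraph: matching indices so that the cohomological degree $d=2$ in the dictionary of \Eref{ig} corresponds to testing $Z^2$ via $\delta^2$, and checking that the hypotheses of \Tref{LK} (bounded geometry and independence of $H$) are automatic here. Once those are verified the proof is a one-line reduction to \cite[Theorem~1.8]{LKK}.
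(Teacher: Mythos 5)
Your argument is exactly the paper's intended derivation: the corollary is presented as an immediate consequence of \Tref{LK} combined with the cosystolic ($\F_2$-cocycle) expansion of non-partite Ramanujan quotients supplied by \cite[Theorem~1.8]{LKK}, taking the incidence geometry of \Eref{ig} at $d=2$ and an independent $H \isom H^2(X)$ so that $B^1(\G)\cdot H = Z^2(X)$. One cosmetic slip: $X$ is a quotient of the simplicial building $\tilde{A}_3(F)$, not a cubical complex, but \Eref{ig} covers the simplicial case verbatim, so nothing in your reduction changes.
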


The expansion constant of the hypercube was computed by Gromov~\cite{Gromov}, also see \cite[Section~4]{KM}.

\section{Cubical complexes}\label{s:cc}

This section briefly presents cubical complexes.
Fix a vertex set~$V$. A~cubical cell of dimension~$d$, or a {\bf{$d$-cell}}, on~$V$ is a subset of $2^d$ vertices in~$V$\!, endowed with the graph structure of the $d$-dimensional cube $\set{0,1}^d$. A subgraph of a cell~$c$ which is itself a cell is called a {\bf{face}} of~$c$. A face of maximal dimension is a {\bf{wall}} of~$c$.
We let $c'\prec c$ denote that~$c'$ is a face of~$c$. A {\bf{cubical complex}} on~$V$ is a collection of cubical cells, of varying dimensions, which includes with a cell all of its faces, and such that every point $i \in V$ is a $0$-cell. The empty set is considered a $(-1)$-cell of the complex. We let $\cells[X]{d}$ denote the collection of $d$-cells in the complex~$X$. The {\bf{dimension}} of $X$ is the largest dimension of a cell.

The cohomology we consider on $X$ is with coefficients in the group $\mu_2 = \set{1,-1}$ of two elements. Let $C^d(X)$ be the functions from $\cells[X]{d}$ to~$\mu_2$. 
The differential map
$$\delta^d \co C^d(X) \ra C^{d+1}(X)$$
is defined by $(\delta^df)_c = \prod_{c'\prec c} f_{c'}$, ranging over the $2d$ walls of~$c$ (there are $4$ walls if $d = 2$, and so on). For example $(\delta^0\alpha)_{ij} = \alpha_i \alpha_j$ for $\alpha \in C^0(X)$.
A~face of co-dimension~$2$ is a wall in exactly two walls, and so $\delta^{d+1}\delta^d = 0$. \Eref{ig} connects this setup to incidence geometry in the obvious manner.

As usual, we set $Z^d(X) = \Ker(\delta^d)$ and $B^d(X) = \Im(\delta^{d-1})$, so that $B^d(X) \sub Z^d(X)$, and the cubical cohomology is the quotient $H^d(X) = Z^d(X)/B^d(X)$.
In any dimension $d \geq 0$, the constant function $-\one \in C^d(X)$ is in fact in $Z^d(X)$, because the $(d+1)$-cube has an even number of faces.

The {\bf{complete cubical complex}} of dimension~$d$ is the cubical complex in which every subset of $2^d$ vertices forms a $d$-cell in all the $(2^d)!/(2^dd!)$ possible ways. In dimension~$1$ this is the complete graph. The complete $2$-dimensional complex on $\set{1,2,3,4}$ has three $2$-cells, corresponding to the enumerations of the vertex set as vertices of a square. 
We compute the first and second cohomology groups of a complete cubical complex in \Srefs{sec:4}{sec:5}, respectively.

\medskip

Although functions on cells are most natural to consider, we will occasionally need functions on arbitrary tuples of vertices.
\begin{defn}\label{Xk}
We denote by $X^{[k]}$ the set of $k$-tuples with distinct entries in the vertex set of $X$, and by $\Fun{k}$ the set of functions $X^{[k]} \ra \mu_2$.
\end{defn}
For example $\Fun{1} = C^0(X)$ and $\Fun{2} = \Cvec$ (see \Ssref{ss:af}), whereas $C^1(X)$ are the symmetric functions $X^{[2]} \ra \mu_2$.
In general $C^d(X) \sub \Fun{2^d}$, with proper inclusion for $d > 0$ due to the symmetry of cells in the left-hand side.

\section{The first cohomology of the complete cubical complex}\label{sec:4}

Let $X$ be the complete cubical complex of dimension~$2$, on at least three vertices.
We define a function $\Delta \co C^1(X) \ra \Fun{3}$ by
\begin{equation}\label{Deltadefall}
(\Delta f)_{ijk} = f_{ij}f_{jk}f_{ki}.
\end{equation}

\begin{lem}\label{bT}
Let $f \in C^1(X)$. Then $f \in Z^1(X)$ if and only if $\Delta f$ is a constant function.
\end{lem}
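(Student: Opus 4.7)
The plan hinges on a single algebraic identity relating $\Delta f$ on two triples sharing an edge to $\delta^1 f$ on a square. For any four distinct vertices $i,j,k,\ell \in V$, multiplying the defining expressions and cancelling $f_{ki}f_{ik}=f_{ik}^{2}=1$ (using symmetry of $f$ on pairs) gives
$$
(\Delta f)_{ijk}\,(\Delta f)_{ik\ell}
\,=\,f_{ij}f_{jk}f_{ki}\cdot f_{ik}f_{k\ell}f_{\ell i}
\,=\,f_{ij}f_{jk}f_{k\ell}f_{\ell i}
\,=\,(\delta^{1}f)_{ijk\ell},
$$
which is exactly the cube condition for the square with cyclic order $i,j,k,\ell$ (a genuine $2$-cell of the complete cubical complex). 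Both directions of the lemma will fall out of this identity.

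For the direction $(\Leftarrow)$, if $\Delta f\equiv c\in\mu_{2}$, the identity immediately yields $(\delta^{1}f)_{ijk\ell}=c^{2}=1$ for every square, so $f\in Z^{1}(X)$.

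For $(\Rightarrow)$, assume $f\in Z^{1}(X)$. I would first note that $\Delta f$ is fully symmetric in its three indices, since $f$ is symmetric on pairs, and so depends only on the underlying $3$-set. The identity then reads $(\Delta f)_{ijk}=(\Delta f)_{ik\ell}$ whenever $i,j,k,\ell$ are distinct, i.e.\ replacing one vertex of a triple by any fresh fourth vertex preserves the value. Since the graph on $3$-subsets of $V$ whose edges join sets sharing two vertices (the Johnson graph $J(|V|,3)$) is connected for $|V|\geq 4$, the value of $\Delta f$ propagates to all triples. The boundary case $|V|=3$ is vacuous: $X$ then has no $2$-cells, so $Z^{1}(X)=C^{1}(X)$, while $\Delta f$ is supported on a single unordered triple and so is trivially constant.

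I do not expect any real obstacle here; the content of the argument is essentially the geometric observation that a diagonal $\{i,k\}$ of a square cuts it into two triangles whose $\Delta f$-values multiply to the square's cube product, together with the standard connectedness of the triple-adjacency graph.
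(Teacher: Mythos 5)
Your proof is correct and follows essentially the same route as the paper's: both directions rest on the factorization $(\Delta f)$ over two triangles sharing a diagonal giving an entry of $\delta^1 f$, and the constancy in the forward direction is obtained by the same ``change one vertex at a time'' connectedness argument (which you phrase via the Johnson graph). Your explicit treatment of the case $\card{V}=3$ is a harmless extra detail not present in the paper.
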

\begin{proof}
First assume $f_{ij}f_{jk}f_{ki}$ is independent of the triple. For any square $(ijk\ell)$ we have that $$(\delta^1f)_{ijk\ell } = f_{ij}f_{jk}f_{k\ell }f_{\ell i} = (f_{ij}f_{jk}f_{ki})(f_{ik}f_{k\ell }f_{\ell i}) = 1,$$
 so that $f \in Z^1(X)$.

On the other hand, let $f \in Z^1(X)$. Clearly, $(\Delta f)_{ijk}$ does not depend on the order of the indices. For distinct $i,j,k,\ell$ we have that $(\Delta f)_{ijk}(\Delta f)_{jk\ell} = (\delta^1f)_{ij\ell k} = 1$. It follows that if $\card{\set{i,j,k} \cap \set{i',j',k'}} = 2$ then $(\Delta f)_{ijk} = (\Delta f)_{i'j'k'}$; but one can get from a fixed triple to any triple by changing one entry at a time, proving that $\theta = (\Delta f)_{ijk}$ is a constant.
\end{proof}

We can now describe the functions in~$Z^1(X)$.

\begin{thm}\label{C1t}
Let $f \in C^1(X)$. Then $f \in Z^1(X)$ if and only if there are a constant $\theta \in \mu_2$ and a function $\alpha \in C^0(X)$ such that \begin{equation}\label{BE}
f_{ij} = \theta \alpha_i \alpha_j.
\end{equation}
\end{thm}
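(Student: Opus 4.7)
The plan is to prove the ``if'' direction by a one-line computation and the ``only if'' direction by reducing to \Lref{bT} and then constructing $\alpha$ explicitly by fixing a basepoint.

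First I would verify the easy direction. If $f_{ij} = \theta\alpha_i\alpha_j$, then for any square $(ijk\ell)$,
\[
(\delta^1 f)_{ijk\ell} = (\theta\alpha_i\alpha_j)(\theta\alpha_j\alpha_k)(\theta\alpha_k\alpha_\ell)(\theta\alpha_\ell\alpha_i) = \theta^4\alpha_i^2\alpha_j^2\alpha_k^2\alpha_\ell^2 = 1
\]
since all values lie in $\mu_2$, so $f \in Z^1(X)$.

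For the non-trivial direction, assume $f \in Z^1(X)$. By \Lref{bT}, the function $\Delta f$ is constant, so there exists $\theta \in \mu_2$ with
\[
f_{ij}f_{jk}f_{ki} = \theta
\]
for every triple of distinct vertices $i,j,k$. Fix a basepoint $v_0 \in V$ and define
\[
\alpha_{v_0} = \theta, \qquad \alpha_i = f_{v_0 i} \text{ for } i \neq v_0.
\]
Then I would verify \eq{BE} in two cases: when both indices differ from $v_0$, the triple identity applied to $\set{v_0, i, j}$ gives $f_{v_0 i}f_{ij}f_{jv_0} = \theta$, hence $f_{ij} = \theta f_{v_0 i}f_{v_0 j} = \theta\alpha_i\alpha_j$ (using symmetry $f_{jv_0} = f_{v_0 j}$). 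When one index equals $v_0$, say $j = v_0$, the definitions and the fact that $\theta^2 = 1$ give $\theta\alpha_i\alpha_{v_0} = \theta \cdot f_{v_0 i} \cdot \theta = f_{v_0 i} = f_{i v_0}$.

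There is no real obstacle here; the only subtle point is the correct choice of $\alpha_{v_0}$, which must equal $\theta$ (rather than~$1$) so that the boundary case works out. The argument also implicitly uses that $\card{V} \geq 3$, which guarantees the existence of a triple containing $v_0$ and any two other indices, so that the constant-triple identity from \Lref{bT} is available.
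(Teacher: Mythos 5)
Your proposal is correct and follows essentially the same route as the paper: the easy direction is the identical one-line computation, and the hard direction fixes a basepoint, invokes \Lref{bT} for the constant $\theta$, and defines $\alpha_i$ from $f_{v_0 i}$ (your choice $\alpha_{v_0}=\theta$ is exactly the paper's $\alpha_j = \theta\alpha_{i_0}f_{i_0 j}$ specialized to $\alpha_{i_0}=\theta$). The case analysis and the use of symmetry of $f$ match the paper's verification.
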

\begin{proof}
If $f_{ij} = \theta\alpha_i \alpha_j$, then $$(\delta^1f)_{ijk\ell} = f_{ij}f_{jk}f_{k\ell}f_{\ell i} = \theta^4 \alpha_i^2\alpha_j^2\alpha_k^2\alpha_{\ell}^2 = 1$$ for every distinct $i,j,k,\ell$, and so $f \in Z^1(X)$.

Now assume $f \in C^1(X)$ is in the kernel of $\delta^1$. By \Lref{bT}, $\theta = f_{ij}f_{jk}f_{ki}$ is a constant. Fix some vertex $i_0$. Choose $\alpha_{i_0} \in \mu_2$ arbitrarily, and let $\alpha_j = \theta \alpha_{i_0} f_{{i_0}j}$ for every $j \neq {i_0}$. This solves \eq{BE} if ${i_0} \in \set{i,j}$; otherwise, $\theta \alpha_i \alpha_j = \theta^3 \alpha_{i_0}^2 f_{{i_0}i}f_{{i_0}j} = \theta (\Delta f)_{{i_0}ij}f_{ij} = f_{ij}$, as claimed.
\end{proof}

Following \Lref{bT} we define 
\begin{equation}\label{Deltadef}\Delta \co Z^1(X) \ra \mu_2\end{equation}
by $\Delta f = f_{i_0 i_1} f_{i_1i_2} f_{i_2i_0}$, where $i_0,i_1,i_2$ is any triple of distinct vertices. This map is onto, because the constant function $(-\one)_{ij} = -1$ maps to~$-1$.

\begin{prop}\label{KerDelta}
$\Ker(\Delta) = B^1(X)$. More explicitly, in the presentation \eq{BE} we have that $\theta = \Delta f$.
\end{prop}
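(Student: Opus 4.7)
The plan is to chain two short computations: one establishing the inclusion $B^1(X) \subseteq \Ker(\Delta)$ directly from the definition of a coboundary, and a second computing the value of $\Delta f$ on the canonical form \eq{BE} provided by \Tref{C1t}.

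First I would handle the easy direction. If $f = \delta^0 \alpha \in B^1(X)$, then $f_{ij} = \alpha_i \alpha_j$, so for any triple $i_0, i_1, i_2$ of distinct vertices,
\[
\Delta f = \alpha_{i_0}\alpha_{i_1} \cdot \alpha_{i_1}\alpha_{i_2} \cdot \alpha_{i_2}\alpha_{i_0} = \alpha_{i_0}^2\alpha_{i_1}^2\alpha_{i_2}^2 = 1,
\]
since entries lie in $\mu_2$. Hence $B^1(X) \sub \Ker(\Delta)$.

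Next, to prove the ``more explicitly'' claim, I would take an arbitrary $f \in Z^1(X)$ and invoke \Tref{C1t} to write $f_{ij} = \theta \alpha_i \alpha_j$ for some $\theta \in \mu_2$ and $\alpha \in C^0(X)$. Then a direct substitution yields
\[
\Delta f = (\theta \alpha_{i_0}\alpha_{i_1})(\theta \alpha_{i_1}\alpha_{i_2})(\theta \alpha_{i_2}\alpha_{i_0}) = \theta^3 \alpha_{i_0}^2\alpha_{i_1}^2\alpha_{i_2}^2 = \theta^3 = \theta,
\]
confirming that the scalar $\theta$ appearing in \eq{BE} is none other than $\Delta f$. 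This is essentially the only calculation with content.

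The reverse inclusion $\Ker(\Delta) \sub B^1(X)$ is then immediate: if $f \in Z^1(X)$ and $\Delta f = 1$, the formula just derived forces $\theta = 1$, so $f_{ij} = \alpha_i \alpha_j = (\delta^0\alpha)_{ij}$, exhibiting $f$ as a coboundary. There is no real obstacle in this proof; the work has already been done in \Lref{bT} and \Tref{C1t}, and the proposition is simply the observation that the surjective map $\Delta$ tracks exactly the scalar ambiguity in the representation \eq{BE}, which is the obstruction to being a coboundary.
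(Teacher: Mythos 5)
Your proof is correct and follows essentially the same route as the paper: invoke \Tref{C1t} to write $f = \theta\cdot\delta^0\alpha$, compute $\Delta f = \theta^3 = \theta$, and read off both inclusions from that identity. The paper's proof is just a more compressed version of the same computation.
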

\begin{proof}
Let $f\in Z^1(X)$. By \Tref{C1t} we may write $f = \theta \cdot \delta^0 \alpha$ for $\alpha \in C^0(X)$. Now for distinct $i_0,i_1,i_2$, $\Delta f = f_{i_0i_1}f_{i_1i_2} f_{i_2i_0} = \theta^3 \alpha_{i_0}^2  \alpha_{i_1}^2\alpha_{i_2}^2= \theta$. Therefore $\Delta f = \one$ if and only if $f \in B^1(X)$.
\end{proof}

\begin{cor}\label{H1=mu2}
The first cohomology of~$X$ is $H^1(X) \isom \mu_2$.
\end{cor}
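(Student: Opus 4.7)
The corollary is essentially a packaging of the results that precede it, so my plan is to assemble them via the first isomorphism theorem applied to the map $\Delta$ defined in equation \eqref{Deltadef}.

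First I would note that, by definition, $H^1(X) = Z^1(X)/B^1(X)$. The map $\Delta \co Z^1(X) \ra \mu_2$ is a group homomorphism (since each factor $f_{ij}f_{jk}f_{ki}$ is multiplicative in $f$), and it is well-defined on $Z^1(X)$ by \Lref{bT}, which guarantees that the value $f_{i_0i_1}f_{i_1i_2}f_{i_2i_0}$ does not depend on the chosen triple.

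Next I would observe that $\Delta$ is surjective: the remark immediately following \eqref{Deltadef} shows that $\Delta(-\one) = -1$, while of course $\Delta(\one) = 1$. Combined with \Pref{KerDelta}, which identifies $\Ker(\Delta) = B^1(X)$, the first isomorphism theorem yields
\[
H^1(X) \;=\; Z^1(X)/B^1(X) \;=\; Z^1(X)/\Ker(\Delta) \;\isom\; \Im(\Delta) \;=\; \mu_2,
\]
as claimed.

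There is no real obstacle here — the corollary simply repackages \Tref{C1t} and \Pref{KerDelta}. The only mild care needed is to confirm that $X$ has at least three vertices, which is the standing assumption of this section and ensures that $\Delta$ is defined and nontrivial; for fewer than three vertices there are no triples on which to evaluate $\Delta$, and the statement would be vacuous or different.
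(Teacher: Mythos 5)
Your proposal is correct and follows exactly the paper's own argument: the paper proves \Cref{H1=mu2} by the same chain $H^1(X) = Z^1(X)/B^1(X) = Z^1(X)/\Ker(\Delta) \isom \Im(\Delta) = \mu_2$, relying on \Pref{KerDelta} and on the surjectivity of $\Delta$ noted right after \eq{Deltadef}. Your added remarks on well-definedness (via \Lref{bT}) and on the standing assumption of at least three vertices are consistent with the text and do not change the argument.
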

\begin{proof}
$H^1(X) = Z^1(X)/B^1(X) = Z^1(X)/\Ker(\Delta) \isom \Im(\Delta) = \mu_2$.
\end{proof}

\section{The second cohomology of the complete cubical complex}\label{sec:5}

In this section we consider the complete cubical complex~$X$ of dimension~$3$.
In \Tref{H2=} we prove that $H^2(X) = \mu_2 \times \mu_2$, obtaining along the way a detailed description of key subgroups of $Z^2(X)$.

The description of functions with vanishing $\delta^2$ requires extending $C^d(X)$ to functions which are not necessarily symmetric. Once developed, the same technique characterizes a somewhat more general set of functions, as we will see below.

\subsection{Generalized differentials}\label{ss:gd}

Let $X$ be a cubical complex. For every $d < d'$, let $\delta^{dd'} \co C^d(X) \ra C^{d'}(X)$ be the map defined for $f \in C^d(X)$ by letting $(\delta^{dd'}(f))_c$ be the product of $f(x)$ over the $d$-dimensional faces $x \prec c$.

In particular, $\delta^d = \delta^{d,d+1}$ is the ordinary $d$-dimensional differential.

\begin{rem}
Let $d<d'<d''$. The number of $d'$-cells which are faces of a given $d''$-cell and containing a given $d$-cell is $\binom{d''-d}{d'-d}$. Therefore,
$$\delta^{d'd''} \delta^{dd'} = \binom{d''-d}{d'-d} \delta^{dd''},$$
where $\binom{d''-d}{d'-d}$ is taken modulo $2$.
\end{rem}

In particular, since $\delta^{01} =\delta^0$ and $\delta^{23} = \delta^2$,
\begin{equation}\label{03use}
\delta^{03} = \delta^{13}\delta^0 = \delta^2 \delta^{02}.
\end{equation}

\subsection{Asymmetric functions}\label{ss:af}

This subsection, as well as Subsections~\ref{ss:af+} and \ref{ss:af++}, develop the relations exhibited in \Fref{fig1}.

Let $\Cvec$ denote the space of functions on the {\emph{directed}} underlying graph of $X$, with values in~$\mu_2$. There is a norm function $$N \co \Cvec \ra C^1(X)$$  defined by $(Nf)_{ij} = f_{ij}f_{ji}$. There is also an embedding $C^1(X) \hra \Cvec$, defined by inducing a function from the undirected graph $\cells[X]{1}$ to the directed graph $\cells[\vec{X}]{1}$ by forgetting directions.  Under this embedding, $$C^1(X) = \set{f \in \Cvec \suchthat Nf = \one}.$$
Similarly, we set
\begin{eqnarray*}
\CvecI & = & \set{f \in \Cvec \suchthat Nf \in Z^1(X)},\\
\CvecII & = & \set{f \in \Cvec \suchthat Nf \in B^1(X)};
\end{eqnarray*} 
so that $C^1(X) \sub \CvecII \sub \CvecI \sub \Cvec$.
\begin{figure}
$$\xymatrix@C=16pt@R=12pt{
{} & C^1(X) \ar@{-}[d] & \Cvec \ar@{->}[l]_{N} \ar@{-}[d] & Z^2(X) 
\ar@{-}[d]
\\
\mu_2 \ar@{-}[d] & \ar@{->}[l]_{\Delta} Z^1(X) & \ar@{->}[l]_{N} \CvecI \ar@{->}[r]^{\vdel^1} & B^2(\vec{X}) \ar@{-}[d]
\\
1 & B^1(X) \ar@{-}[u]  \ar@{->}[l]_{\Delta} & \ar@{->}[l]_{N} \CvecII \ar@{-}[u] \ar@{->}[r]^{\vdel^1} & B^2(X) \ar@{=}[d] \\
  & 1 \ar@{-}[u] & \ar@{->}[l]_{N} C^1(X) \ar@{-}[u] \ar@{->}[r]^{\delta^1} & B^2(X) \\
}$$
\caption{Subspaces of $\Cvec$ and $C^2(X)$. Solid lines represent bottom-to-top inclusion; arrows are maps; the double line is equality.}\label{fig1}
\end{figure}

\begin{rem}\label{62}
We may extend $\delta^1 \co C^1(X) \ra C^2(X)$ to a function $\vdel^1$ from $\Cvec$ (to functions on directed $2$-cells), by
\begin{equation}\label{del1def}
(\vdel^1f)_{ii'i''i'''} = f_{ii'}f_{i''i'}f_{i''i'''}f_{ii'''}
\end{equation}
(in this particular order of the arrows, depicting the directed graph~$K_{2,2}$). Under this definition, $\CvecI$ is the space of functions~$f$ for which $\vdel^1f \in C^2(X)$, namely
for which $\vdel^1f$ is symmetric under the action of the dihedral group~$D_4$. Indeed, $\vdel^1f$ is a-priory symmetric with respect to $\sg{(13),(24)} \sub S_4$, so full symmetry is attained when $(\vdel^1f)_{ii'i''i'''} = (\vdel^1f)_{i'i''i'''i}$, but this is equivalent to  $\delta^1(Nf) = \one$, namely $Nf \in Z^1(X)$.
\end{rem}

We thus define $B^2(\vec{X}) = \set{\vdel^1 f \suchthat f \in \CvecI}$.
\begin{prop}\label{B2vecinZ2}
$B^2(X) \sub B^2(\vec{X}) \sub Z^2(X)$.
\end{prop}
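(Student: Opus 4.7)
The statement combines two inclusions of very different nature: the first is formal, while the second carries the combinatorial substance. For $B^2(X) \sub B^2(\vec{X})$, the plan is to observe that the embedding $C^1(X) \hra \Cvec$ sends any symmetric $f$ to an element with $Nf = \one \in Z^1(X)$, so $f \in \CvecI$; moreover the formula \eq{del1def} collapses to $(\delta^1 f)_{ii'i''i'''}$ when $f_{ij} = f_{ji}$. Hence $B^2(X) = \delta^1(C^1(X)) = \vdel^1(C^1(X)) \sub \vdel^1(\CvecI) = B^2(\vec{X})$.

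For the substantive inclusion $B^2(\vec{X}) \sub Z^2(X)$, I would fix $f \in \CvecI$ and an arbitrary $3$-cube $c$ of $X$, and verify directly that $(\delta^2 \vdel^1 f)_c = 1$. The key structural fact to exploit is that the underlying graph of $c$ is bipartite: its $8$ vertices split into two colour classes $A$ and $B$ of size $4$, and this bipartition restricts to each of the six $2$-faces $F$ of $c$ as the two antipodal pairs $A \cap F$ and $B \cap F$ of the square $F$. By \Rref{62}, $\vdel^1 f$ already lies in $C^2(X)$, so $(\vdel^1 f)_F$ can be evaluated using any $D_4$-representative of $F$; I would choose $ii'i''i'''$ with $\set{i,i''} = A \cap F$ and $\set{i', i'''} = B \cap F$, which makes all four arrows in \eq{del1def} point from the $A$-side to the $B$-side of the square.

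Multiplying $(\vdel^1 f)_F$ over the six faces of $c$ then yields $24$ factors, each of the form $f_{ab}$ with $a \in A$ and $b \in B$. Since each of the $12$ edges of $c$ lies in exactly two $2$-faces and, by the uniform choice of orientation, contributes the same factor $f_{ab}$ in both, the product factors edge-by-edge as $\prod_{(a,b)} f_{ab}^2 = 1$, giving $(\delta^2 \vdel^1 f)_c = 1$. The main step requiring care is the compatibility between the bipartition of each $2$-face and the ambient $2$-colouring of the $3$-cube — equivalently, that the $K_{2,2}$ ordering built into \eq{del1def} can be consistently aligned with a global bipartition of $c$; once this geometric point is pinned down, the rest is edge-counting. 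An algebraic alternative via \eq{03use} would need a directed analogue of $\delta^{02}$, which seems less transparent than the bipartition argument.
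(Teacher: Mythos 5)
Your proposal is correct and follows essentially the same route as the paper: the first inclusion is the observation that $\vdel^1$ restricts to $\delta^1$ on $C^1(X)$, and the second is exactly the paper's argument, choosing the even--odd (bipartition) classes of the cube, representing each wall so that all four arrows in \eq{del1def} go from one class to the other, and cancelling each edge's contribution across the two faces containing it. The compatibility point you flag (the cube's $2$-colouring restricts to the antipodal pairs of each square face) is precisely what the paper's choice ``$s=(ii'i''i''')$ with $i$ even'' relies on, so nothing further is needed.
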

\begin{proof}
The left inclusion is obvious because the restriction of $\vdel^1$ to $C^1(X)$ is~$\delta^1$. Let $f \in \CvecI$. In order to prove the right inclusion, we need to verify that $\delta^2\vdel^1f = \one$. Let $c$ be a $3$-cell, namely a cube, whose $1$-skeleton is bipartite. Choose an even-odd partition of the vertices of~$c$, induced by the $1$-skeleton of the cell (000,011,101,110 vs.\,001,010,100 and 111). Direct the edges of $c$ to go from the even to the odd vertices, and present each wall~$s$ of~$c$ as $s=(ii'i''i''')$ where $i$ is even. Now every edge appears in the two faces of $c$ twice in the same direction, so that $(\delta^2\vdel^1f)_c = 1$ by cancelation, regardless of~$f$.
\end{proof}

\subsection{The functions in $\CvecII$}\label{ss:af+}

Define the head and tail functions $\eta_h, \eta_t \co C^0(X) \ra \Cvec$ by $(\eta_h\alpha)_{ij} = \alpha_i$ and $(\eta_t\alpha)_{ij} = \alpha_j$. Note that
\begin{equation}
\label{inB1}
N(\eta_h \alpha) = \eta_h(\alpha) \eta_t(\alpha) = \delta^0\alpha \in B^1(X).
\end{equation}

\begin{prop}\label{64}
We have that $\CvecII = C^1(X)\Im(\eta_h)$.
\end{prop}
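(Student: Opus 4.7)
The plan is to verify both inclusions directly, exploiting two elementary facts: the norm map $N \co \Cvec \ra C^1(X)$ is multiplicative (a group homomorphism when everything is regarded multiplicatively with values in $\mu_2$), and by \eq{inB1} the composition $N \circ \eta_h = \delta^0$ carries $C^0(X)$ onto $B^1(X)$. So $\eta_h$ serves as a section of $N$ over $B^1(X)$, which is exactly what is needed.

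For the inclusion $C^1(X)\Im(\eta_h) \subseteq \CvecII$, I would take an arbitrary product $g \cdot \eta_h\alpha$ with $g \in C^1(X)$ and $\alpha \in C^0(X)$, and use multiplicativity of $N$ together with $Ng = \one$ and $N(\eta_h\alpha) = \delta^0\alpha$ to conclude that $N(g \cdot \eta_h\alpha) = \delta^0\alpha \in B^1(X)$.

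For the reverse inclusion $\CvecII \subseteq C^1(X)\Im(\eta_h)$, the key step is the following. Given $f \in \CvecII$, by definition $Nf \in B^1(X)$, so there is $\alpha \in C^0(X)$ with $Nf = \delta^0\alpha$. Setting $h = \eta_h\alpha$, we get $Nh = \delta^0\alpha = Nf$. Since values are in $\mu_2$, every element is its own inverse, so $N(fh) = (Nf)(Nh) = (Nf)^2 = \one$. This means $fh \in C^1(X)$, and therefore $f = (fh) \cdot h$ exhibits $f$ as an element of $C^1(X) \cdot \Im(\eta_h)$.

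There is no real obstacle here; the proposition is a straightforward consequence of the fact that $N$ is a surjective homomorphism onto $B^1(X)$ when restricted to $\CvecII$, and that $\eta_h$ provides a splitting. The content of the statement is really that $\CvecII/C^1(X) \isom B^1(X)$ via $N$, with $\eta_h$ choosing explicit coset representatives lying in $\Im(\eta_h)$.
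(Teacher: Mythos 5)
Your proof is correct and follows essentially the same route as the paper: both directions rest on the multiplicativity of $N$ and the identity $N(\eta_h\alpha)=\delta^0\alpha$ from \eq{inB1}, with the reverse inclusion obtained by multiplying $f$ by $\eta_h\alpha$ to land in $C^1(X)$. Your closing remark that $\eta_h$ splits $N$ over $B^1(X)$ is a fair summary of what the paper's argument implicitly uses.
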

\begin{proof}
Let $\alpha \in C^0(X)$. By \eq{inB1} and the definition, $\eta_h \alpha \in \CvecII$. This proves the inclusion~$\supseteq$. On the other hand, if $f \in \CvecII$ then  $Nf = \delta^0\alpha$ for some $\alpha \in C^0(X)$, and then $N(f \cdot \eta_h \alpha) = Nf \cdot \delta^0\alpha = \one$, so that $f \cdot \eta_h \alpha \in C^1(X)$ and $f \in C^1(X)\Im(\eta_h)$.
\end{proof}

Let $Z^1(\vec{X}) = \Ker(\vdel^1) \cap \CvecI = \set{f \in \CvecI \suchthat \vdel^1f = \one}$.
\begin{prop}\label{Z1vec}
We have that $Z^1(\vec{X}) \sub \CvecII$.
\end{prop}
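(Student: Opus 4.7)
The plan is to reduce the claim to a $\Delta$-computation on $Nf$, using the analysis of \Sref{sec:4}. Since any $f \in \CvecI$ satisfies $Nf \in Z^1(X)$ by definition, \Lref{bT} already guarantees that $\Delta(Nf)$ is a constant, and by \Pref{KerDelta} the membership $f \in \CvecII$ is equivalent to this constant being~$1$. It therefore suffices to exhibit a single triple $\set{j,k,\ell}$ at which $\Delta(Nf)_{jk\ell} = 1$.

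To produce one, I would fix a fourth vertex $i$ distinct from $j,k,\ell$ and apply the hypothesis $\vdel^1 f = \one$ to the three directed $2$-cells on $\set{i,j,k,\ell}$ in which~$i$ is one of the two ``sources'' in the sense of \Rref{62}---one cell for each of the three bipartitions of $\set{i,j,k,\ell}$ into pairs. Concretely, these are the tuples $(i,k,j,\ell)$, $(i,j,k,\ell)$ and $(i,j,\ell,k)$, yielding
\begin{equation*}
f_{ik}f_{jk}f_{j\ell}f_{i\ell} \;=\; f_{ij}f_{kj}f_{k\ell}f_{i\ell} \;=\; f_{ij}f_{\ell j}f_{\ell k}f_{ik} \;=\; 1.
\end{equation*}

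Multiplying the three left-hand sides, each of the three directed edges leaving~$i$ appears exactly twice and hence cancels (values are in~$\mu_2$), while each directed edge among $\set{j,k,\ell}$ appears exactly once, and pairs with its reverse. The product reorganizes as
\begin{equation*}
(f_{jk}f_{kj})(f_{j\ell}f_{\ell j})(f_{k\ell}f_{\ell k}) \;=\; (Nf)_{jk}(Nf)_{j\ell}(Nf)_{k\ell} \;=\; \Delta(Nf)_{jk\ell},
\end{equation*}
which therefore equals~$1$. Combined with \Lref{bT} this forces $\Delta(Nf) = \one$, so $Nf \in B^1(X)$ by \Pref{KerDelta} and hence $f \in \CvecII$. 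The only mildly delicate point is the edge-bookkeeping across the three chosen squares; once the source convention is made uniformly (with $i$ a source in all three tuples), the count is routine.
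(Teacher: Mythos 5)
Your proof is correct and is essentially the paper's own argument: both multiply $\vdel^1 f=\one$ over three squares through a common auxiliary vertex so that the edges meeting that vertex cancel, leaving $\Delta(Nf)=1$ at one triple, and then conclude $Nf\in B^1(X)$ via \Lref{bT} and \Pref{KerDelta}. The only (cosmetic) difference is that you place the auxiliary vertex as a common source, while the paper places it as a common sink.
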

\begin{proof}
Let
$f \in \CvecI$ be such that $\vdel^1f = \one$.
Let $i,j,k$ be distinct vertices. Since $Nf \in Z^1(X)$, $\Delta(Nf) = (Nf)_{ij}(Nf)_{jk}(Nf)_{ki}$. Let $a \neq i,j,k$ be a fourth vertex. We have that
$$
\Delta(Nf) =
(\vdel^1f)_{iajk}(\vdel^1f)_{jaki}(\vdel^1f)_{kaij} = 1,$$
since the edges from $i,j,k$ to $a$ cancel. (This computation is formalized in \Rref{T8}).
By \Pref{KerDelta}, $Nf \in B^1(X)$, and thus $f \in \CvecII$.
\end{proof}

\begin{prop}\label{deltaC1''}
Let $f \in \CvecI$. Then $\vdel^1f \in B^2(X)$ if and only if $f \in \CvecII$.
\end{prop}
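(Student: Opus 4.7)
The plan is to prove both directions directly using the multiplicativity of $\vdel^1$ and $N$ (as maps into the group $\mu_2$), together with the structural results \Pref{64} (description of $\CvecII$) and \Pref{Z1vec} ($Z^1(\vec{X}) \sub \CvecII$) already available.

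For the direction $(\Leftarrow)$: assume $f \in \CvecII$. By \Pref{64}, I can write $f = g \cdot \eta_h \alpha$ with $g \in C^1(X)$ and $\alpha \in C^0(X)$. The key computation is to plug $\eta_h\alpha$ into the definition \eq{del1def}:
\[
(\vdel^1(\eta_h\alpha))_{ii'i''i'''} = \alpha_i \cdot \alpha_{i''} \cdot \alpha_{i''} \cdot \alpha_i = 1,
\]
so $\vdel^1(\eta_h\alpha) = \one$. Since $\vdel^1$ is a pointwise-product homomorphism on $\mu_2$-valued functions, $\vdel^1 f = \vdel^1 g \cdot \vdel^1(\eta_h\alpha) = \delta^1 g \in B^2(X)$, where we use that $\vdel^1$ restricted to $C^1(X)$ is $\delta^1$ (this is what makes $B^2(X)$ the image of the undirected differential).

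For the direction $(\Rightarrow)$: assume $f \in \CvecI$ with $\vdel^1 f \in B^2(X)$, so $\vdel^1 f = \delta^1 g$ for some $g \in C^1(X) \sub \Cvec$. Since values lie in $\mu_2$, forming $fg \in \Cvec$ gives $\vdel^1(fg) = \vdel^1 f \cdot \delta^1 g = \one$. Next I check that $fg \in \CvecI$: the norm $N$ is also a multiplicative map, and $g \in C^1(X)$ means $Ng = \one$, so $N(fg) = Nf \cdot Ng = Nf \in Z^1(X)$. Hence $fg \in Z^1(\vec{X})$. Applying \Pref{Z1vec} yields $fg \in \CvecII$, i.e., $N(fg) = Nf \in B^1(X)$, so $f \in \CvecII$ as required.

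The proof is largely a bookkeeping argument once one notices two facts: that $\eta_h\alpha$ is annihilated by $\vdel^1$ (two cancelling pairs in each directed $2$-cell), and that multiplying a candidate $f$ by a symmetric $g \in C^1(X)$ does not change $Nf$. The only genuinely substantive step hidden behind the proof is \Pref{Z1vec} itself (which traces back to the description of $Z^1(X)$ via $\Delta$ in \Pref{KerDelta}); conditional on that, there is no real obstacle. I expect the trickiest point to describe carefully is just keeping track that $\vdel^1$ and $N$ are both group homomorphisms into the pointwise-$\mu_2$ target, so that multiplication-by-$g$ transfers the problem from $f$ to an element of $Z^1(\vec X)$ without leaving $\CvecI$.
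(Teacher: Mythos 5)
Your proof is correct and follows essentially the same route as the paper: the forward direction uses the decomposition $f = g\cdot\eta_h\alpha$ from \Pref{64} and the computation $\vdel^1(\eta_h\alpha)=\one$, and the reverse direction multiplies by the symmetric $g$ with $\vdel^1 f = \delta^1 g$ and invokes \Pref{Z1vec}. The only difference is cosmetic bookkeeping (you conclude via $N(fg)=Nf\in B^1(X)$ where the paper writes $f\in C^1(X)Z^1(\vec X)\sub \CvecII$).
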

\begin{proof}
First assume $f \in \CvecII$. We apply \Pref{64}: Up to an element of $C^1(X)$, whose image under $\delta^1$ is clearly in $B^2(X)$, we may assume $f = \eta_h\alpha$ for $\alpha \in C^0(X)$. Now
 $$(\vdel^1(\eta_h\alpha))_{ijk\ell} = (\eta_h\alpha)_{ij}
(\eta_h\alpha)_{kj}
(\eta_h\alpha)_{k\ell}
(\eta_h\alpha)_{i\ell} = \alpha_i^2 \alpha_k^2 = 1,$$
so that $\vdel^1 f \in B^2(X)$.

Now, if $\vdel^1f \in B^2(X) = \delta^1(C^1(X))$, then by definition there is $g \in C^1(X)$ such that $\vdel^1(fg) = \one$, and $f \in C^1(X) Z^1(\vec{X}) \sub \CvecII$ by \Pref{Z1vec}.
\end{proof}

\subsection{The second differential}\label{ss:af++}

Our goal here is to describe $Z^2(X)$, namely those functions $g \in C^2(X)$ for which $\delta^2 g = \one$. Slightly more generally, we consider functions $g \in C^2(X)$ for which there is $\alpha \in C^0(X)$ such that $\delta^2g = \delta^{03}\alpha$. Explicitly, this condition holds if for every cube, denoting the vertices in a disjoint pair of faces by $[ijk\ell]$ and $[i'j'k'\ell']$, we have that
$$g_{ijj'i'}\ g_{jkk'j'}\ g_{i'j'k'\ell'}\  g_{kk'\ell'\ell}\
g_{ii'\ell'\ell}\ g_{ijk\ell} = \alpha_i \alpha_j \alpha_k \alpha_{\ell} \alpha_{i'} \alpha_{j'} \alpha_{k'} \alpha_{\ell'}.$$
We assume $\card{\cells[X]{0}}\geq 10$, so there are sufficiently many $3$-cells to play with.
\begin{prop}\label{X3}
Let $g \in C^2(X)$. Assume $\delta^2g \in \Im(\delta^{03})$. Then for every distinct $a,b,i,i',j,j'$ we have that
\begin{equation}\label{X3=}
g_{aibj} g_{ajbj'} g_{aj'bi'} g_{ai'bi} = 1.
\end{equation}
\end{prop}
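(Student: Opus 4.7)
The plan is to first reduce to $g\in Z^2(X)$, and then to use a cube-cancellation argument to force the identity.

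For the reduction, \eqref{03use} gives $\delta^{03}=\delta^2\delta^{02}$, so the hypothesis $\delta^2 g\in\Im(\delta^{03})$ implies that $g':=g\cdot\delta^{02}\alpha$ lies in $Z^2(X)$ for some $\alpha\in C^0(X)$. In the target product each of $a,b$ appears four times (once in every target square) while each of $i,j,j',i'$ appears twice, so the contribution of the correction factor is $\alpha_a^4\alpha_b^4\alpha_i^2\alpha_j^2\alpha_{j'}^2\alpha_{i'}^2=1$ in $\mu_2$; the identity for $g$ is therefore equivalent to the identity for $g'$, and I may assume $g\in Z^2(X)$.

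Next, I fix two auxiliary vertices $c_1,c_3$ distinct from $a,b,i,j,j',i'$ and form four $3$-cubes on the $8$-vertex set $\{a,b,i,j,j',i',c_1,c_3\}$: cube~$k$ has the $k$-th target square as its top face, with matching chosen so that $a\leftrightarrow c_1$ and $b\leftrightarrow c_3$ along the vertical direction, while the four cycle vertices $i,j,j',i'$ pair among themselves. Concretely, cube~1 will have bottom $(c_1,j',c_3,i')$, cube~2 bottom $(c_1,i',c_3,i)$, cube~3 bottom $(c_1,i,c_3,j)$, and cube~4 bottom $(c_1,j,c_3,j')$. The key combinatorial check is that each of the $16$ side faces is shared between two cyclically-adjacent cubes in this ring, so they cancel in pairs; multiplying the four identities $(\delta^2 g)_{c_k}=1$ then yields
\[ T(a,b)\cdot T(c_1,c_3)=1, \]
where $T(x,y):=g_{xiyj}\,g_{xjyj'}\,g_{xj'yi'}\,g_{xi'yi}$. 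Thus $T$ is independent of its pair argument, depending only on the 4-cycle $C$ traced out by $i,j,j',i'$; denote this constant $\tau_C$.

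The main obstacle will be to show that the constants $\tau_{C_1},\tau_{C_2},\tau_{C_3}$ attached to the three distinct 4-cycles on any four vertices all coincide. For this, I would use a variant of the construction with a larger (12-vertex) auxiliary set, in which the matchings are chosen so that the bottom cycle structure differs from that of the top; the same side-face cancellation produces an equation of the form $\tau_{C_i}\cdot\tau_{C_j}=1$, hence $\tau_{C_i}=\tau_{C_j}$ in $\mu_2$. Combined with the local identity $\tau_{C_1}\tau_{C_2}\tau_{C_3}=1$, which holds because in this product each of the six possible other-diagonal squares appears exactly twice (each edge of $K_4$ lying in exactly two of the three Hamiltonian 4-cycles), one obtains $\tau^3=1$, so $\tau=1$ and the identity \eqref{X3=} follows.
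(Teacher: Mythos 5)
Your reduction to $g\in Z^2(X)$ via $g'=g\cdot\delta^{02}\alpha$ is valid (and the parity count showing the correction drops out of the target product is right), your ring of four cubes does have all sixteen side walls cancelling in pairs, so $T(a,b)\,T(c_1,c_3)=1$ is correct, and the identity $T_{C_1}(a,b)T_{C_2}(a,b)T_{C_3}(a,b)=1$ holds for any $g$. But the step you leave as a sketch --- that the constants attached to the three distinct $4$-cycles on $\{i,j,j',i'\}$ coincide --- is where the real content lies, and the mechanism you describe cannot deliver it. In any four-cube configuration whose top walls are the squares of $T_{C_m}(a,b)$ and whose bottom walls are the squares of $T_{C_{m'}}(c,d)$, cancellation of the side walls forces a single well-defined matching $\sigma$ on the cycle vertices (the two cubes sharing a cycle vertex $w$ must give $w$ the same bottom partner), and then each bottom other-diagonal is the $\sigma$-image of the top one, so the bottom squares realize the cycle $\sigma(C_m)$. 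If the bottoms are to live on the same four vertices $i,j,j',i'$ --- which an equation $\tau_{C_m}\tau_{C_{m'}}=1$ requires --- then disjointness of the two opposite walls inside each cube forces $\sigma(e)\cap e=\emptyset$ for every edge $e$ of $C_m$, and the only permutation of four points with this property is the antipodal involution of $C_m$, which preserves $C_m$. So a single ``variant'' ring can only reproduce the same cycle, never relate $C_m$ to $C_{m'}\neq C_m$. The claim is repairable --- run one ring down to a cycle on a disjoint auxiliary quadruple, a second ring back via a bijection whose composite does not preserve $C_m$, and glue with your pair-independence --- but that is exactly the part you have not written, and it consumes about twelve vertices, more than the standing hypothesis $\card{\cells[X]{0}}\geq 10$ under which the proposition is stated.

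For comparison, the paper's proof needs none of this: the four target squares are taken as the inner walls of four cubes which all share one auxiliary square $[sts't']$ on four extra vertices; the sixteen remaining walls pair off between the cubes and the common square occurs four times, so the product of the four corresponding entries of $\delta^2g$ is precisely the left-hand side of \eq{X3=}. Moreover the hypothesis $\delta^2g\in\Im(\delta^{03})$ is used directly, with no reduction to $Z^2(X)$: every vertex occurs an even number of times among the four cubes, so the product of the matching entries of $\delta^{03}\alpha$ is $1$. That gives \eq{X3=} in one step with exactly ten vertices; your first ring already contains the cancellation idea, and the missing move is to let the four cubes share a single auxiliary square rather than chaining cycles through matchings and comparing cycle types afterwards.
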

\begin{proof}
Let $s,t,s',t'$ be distinct vertices, disjoint from $a,b,i,i',j,j'$ (this is possible because $\card{X_0}\geq 10$).
Consider the following four $3$-cells, in which identical faces are denoted by the same circled number: 
$$
\CUBEb{s}{t}{s'}{t'}{i}{a}{j}{b}{{1}{3}{2}{4}}
\quad
\CUBEb{s}{t}{s'}{t'}{i}{a}{j'}{b}{{1}{7}{6}{4}}
\quad
\CUBEb{s}{t}{s'}{t'}{i'}{a}{j}{b}{{5}{3}{2}{8}}
\quad
\CUBEb{s}{t}{s'}{t'}{i'}{a}{j'}{b}{{5}{7}{6}{8}}
$$
The product of $(\delta^2g)_{c}$ ranging over the four $3$-cells is~$1$ by assumption, because each vertex appears an even number of times. But this product is the left-hand side of \eq{X3=}, because all the other faces, including $[sts't']$, cancel.
\end{proof}

For a subgroup $A \sub C^2(X)$, we let $\pm A$ denote the subgroup $\sg{-\one,A}$ generated by $A$ and the constant function $-\one$.
\begin{thm}\label{goodC2}
We have that
$$Z^2(X)\Im(\delta^{02}) = \pm B^2(\vec{X}) \Im(\delta^{02}).$$
\end{thm}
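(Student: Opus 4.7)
The inclusion $\pm B^2(\vec{X}) \Im(\delta^{02}) \sub Z^2(X) \Im(\delta^{02})$ is immediate: by \Pref{B2vecinZ2} we have $B^2(\vec{X}) \sub Z^2(X)$, and the constant function $-\one$ lies in $Z^2(X)$ because every $3$-cube has an even number of walls (as noted in \Sref{s:cc}). The substance of the theorem is the reverse inclusion, on which the remainder of the plan focuses.

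Let $g \in Z^2(X)\Im(\delta^{02})$. By \eq{03use} this amounts to $\delta^2 g \in \Im(\delta^{03})$, which is exactly the hypothesis of \Pref{X3}. Since the conclusion we are after is invariant under multiplying $g$ by an element of $\Im(\delta^{02})$, I may pick $\alpha \in C^0(X)$ with $\delta^2 g = \delta^{03}\alpha$ and replace $g$ by $g \cdot \delta^{02}\alpha$; from now on $g \in Z^2(X)$. For each pair of distinct vertices $a, b$ set
\[
h^{(a,b)}_{ij} := g_{aibj}, \qquad i,j \in \cells[X]{0} \setminus \set{a,b},\ i \neq j.
\]
This is well-defined on $2$-cells of the complete $2$-dimensional cubical complex $X^{(a,b)}$ on $\cells[X]{0} \setminus \set{a,b}$, because the $2$-cell with opposite pairs $\set{a,b}$ and $\set{i,j}$ does not depend on the ordering within the pairs. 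The identity \eq{X3=} is precisely $\delta^1 h^{(a,b)} = \one$, so \Tref{C1t} yields a factorization
\[
g_{aibj} = \theta_{ab}\,\alpha^{(a,b)}_i\,\alpha^{(a,b)}_j
\]
with $\theta_{ab} \in \mu_2$ and $\alpha^{(a,b)} : \cells[X]{0} \setminus \set{a,b} \to \mu_2$, unique up to the replacement $\alpha^{(a,b)} \mapsto -\alpha^{(a,b)}$.

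The remaining step is to assemble this data into a single $f \in \CvecI$, a correction $\beta \in C^0(X)$, and an overall sign $\epsilon \in \mu_2$ so that $g = \epsilon \cdot \vdel^1 f \cdot \delta^{02}\beta$. My proposal is to fix a reference vertex $b_0$, and to set $\beta_{b_0} = 1$, $\beta_c = \theta_{c, b_0}$ for $c \neq b_0$, $f_{b_0 x} = f_{x b_0} = 1$, and
\[
f_{xy} = \alpha^{(x, b_0)}_y \beta_y
\]
for $x, y \neq b_0$ with $x \neq y$, after pinning down a canonical sign for each $\alpha^{(x, b_0)}$. A direct check confirms the required identity on every $2$-cell through $b_0$ (the two sides reduce to the factorization of $h^{(\cdot, b_0)}$, with the $\theta_{c, b_0}$'s absorbed into $\beta$). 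For a cell $\set{i,j,k,\ell}$ avoiding $b_0$ (opposite pairs $\set{i,k}$, $\set{j,\ell}$), the same identity reduces, after collecting factors, to a cross-relation of the form
\[
\alpha^{(i,k)}_x \cdot \alpha^{(i, b_0)}_x \alpha^{(k, b_0)}_x \theta_{x, b_0} = c(i,k) \qquad \text{(independent of $x$)},
\]
which I would derive by applying \Pref{X3} with an auxiliary sixth vertex to splice $h^{(i,k)}$ together with $h^{(i, b_0)}$ and $h^{(k, b_0)}$, then invoking the uniqueness clause of \Tref{C1t}. The conclusion $f \in \CvecI$ (i.e.~$Nf \in Z^1(X)$) then follows from \Rref{62}: $\vdel^1 f$ is $D_4$-symmetric because $g$ and $\delta^{02}\beta$ are.

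The main obstacle is the sign bookkeeping in this final step. Each $\alpha^{(a,b)}$ is intrinsically defined only up to a global sign, and matching these signs consistently across all pairs $(i,k)$ different from the special pairs $(\cdot, b_0)$ need not yield a universal ``$+$''. It is precisely this potential obstruction that produces the sign $\epsilon$ in the final identity, and so explains the $\pm$ on the right-hand side of the theorem.
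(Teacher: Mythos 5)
Your opening matches the paper's proof exactly: the reduction to $g \in Z^2(X)$, the fiber functions $h^{(a,b)}_{ij} = g_{aibj}$, and the factorization $g_{aibj} = \theta_{ab}\,\alpha^{(a,b)}_i\alpha^{(a,b)}_j$ via \Pref{X3} and \Tref{C1t}. The gap is in the assembly step, and it is genuine. First, \Pref{X3} constrains a single fiber only --- all four entries in \eq{X3=} share the diagonal pair $\set{a,b}$ --- so ``applying \Pref{X3} with an auxiliary sixth vertex'' cannot splice $h^{(i,k)}$ with $h^{(i,b_0)}$ and $h^{(k,b_0)}$. The cross-fiber information comes from the symmetry $g_{aibj}=g_{ibja}$, i.e.\ $h^{(a,b)}_{ij}=h^{(i,j)}_{ab}$, which your plan never invokes and which is the engine of the paper's argument. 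Second, even granting your cross-relation, the recipe does not close. Expanding $\epsilon\,\vdel^1 f\cdot\delta^{02}\beta$ on a cell $(aibj)$ avoiding $b_0$, the sign ambiguities of the $\alpha^{(x,b_0)}$ cancel in pairs (each occurs twice), and the requirement becomes $\theta_{ab}\theta_{ab_0}\theta_{bb_0}=\epsilon$ for all $a,b$, whereas cells through $b_0$ force $\epsilon=+1$ on the nose. So what is really needed is that $(\Delta\theta)_{abb_0}$ is constant, i.e.\ that $\theta$, viewed as an element of $C^1(X)$, lies in $Z^1(X)$ (\Lref{bT}); this is precisely the paper's key intermediate computation, done via the diagonal-swap symmetry and three further applications of \Pref{X3}, after which \Tref{C1t} splits $\theta_{ab}=\theta'\beta_a\beta_b$ and the constant $\theta'$ becomes the global sign. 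Your proposal omits this step entirely.

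Concretely, the construction as written fails for $g=-\one$: there $\theta_{ab}\equiv -1$ and $\alpha^{(a,b)}\equiv \one$, so with $\beta_c=\theta_{cb_0}$ the product $\vdel^1 f\cdot\delta^{02}\beta$ equals $g$ on every cell through $b_0$ (forcing $\epsilon=+1$) but equals $+1\neq g$ on every cell avoiding $b_0$ (forcing $\epsilon=-1$), and no pinning of signs of the $\alpha^{(x,b_0)}$ can repair this, since those signs are invisible in all the relevant products. This also shows that your closing diagnosis mislocates the source of the $\pm$: it is not an obstruction to matching signs of the $\alpha$'s, but the value of the invariant $\theta'=\Delta\theta$ attached to the edge function $\theta$ --- the cohomological constant that must be extracted before the coboundary part of $\theta$ is absorbed into $\delta^{02}\beta$.
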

\begin{proof}
Following \Pref{B2vecinZ2} the inclusion $\supseteq$ is clear because  $-\one \in Z^2(X)$. For vertices $a,b$, let $X^{ab}$ denote the cubical complex obtained from $X$ by removing the vertices $a,b$ and every cell passing through either of them. Let~$g \in Z^2(X)$. Abusing notation, we define $f^{ab} \in C^{1}(X^{ab})$ and $f_{ij} \in C^1(X^{ij})$ by $f^{ab}_{ij} = g_{aibj}$. By \Pref{X3} we have that $\delta^1(f^{ab}) = \one$.
Therefore, by \Tref{C1t}, there are $\theta^{ab} \in \mu_2$ and $\alpha^{ab} \in C^0(X^{ab})$ such that
\begin{equation}\label{Z5}
f^{ab}_{ij} = \theta^{ab} \alpha^{ab}_{i} \alpha^{ab}_j
\end{equation}
for every $i,j$. Since $f^{ab} = f^{ba}$, we may assume $\theta^{ba} = \theta^{ab}$ and $\alpha^{ba} = \alpha^{ab}$ as well. In particular we may view $\theta$ as an element of~$C^1(X)$.
By \Pref{KerDelta}, $\theta^{ab} = \Delta(f^{ab})$, which we may calculate by fixing distinct $i,j,k$ as $f^{ab}_{ij}f^{ab}_{jk}f^{ab}_{ki}$. Since $$f^{ij}_{ab}  = g_{ibja} =
g_{aibj} = f^{ab}_{ij},$$
we have that \begin{eqnarray*}
(\delta^1\theta)_{abcd} & = & \theta^{ab}\theta^{bc}\theta^{cd}\theta^{da} \\
& = & \Delta(f^{ab})\Delta(f^{bc})\Delta(f^{cd})\Delta(f^{da}) \\
& = & (f^{ab}_{ij}f^{ab}_{jk}f^{ab}_{ki})
(f^{bc}_{ij}f^{bc}_{jk}f^{bc}_{ki})
(f^{cd}_{ij}f^{cd}_{jk}f^{cd}_{ki})
(f^{da}_{ij}f^{da}_{jk}f^{da}_{ki})\\
& = & \delta^1(f_{ij})_{abcd}\delta^1(f_{jk})_{abcd}\delta^1(f_{ki})_{abcd}
\end{eqnarray*} which by applying \Pref{X3} thrice is equal to $1$. So $\theta \in Z^1(X)$.
Therefore, by \Tref{C1t}, there are $\theta' \in \mu_2$ and $\beta \in C^0(X)$ such that
\begin{equation}\label{ttt}
\theta^{ab} = \theta' \beta_a \beta_b
\end{equation}
for all $a$ and $b$.
Substituting \eq{ttt} in \eq{Z5} we again have
$$\theta' \beta_i \beta_j \alpha^{ij}_a\alpha^{ij}_b = \theta^{ij} \alpha^{ij}_a\alpha^{ij}_b= f_{ab}^{ij} = g_{ibja} = g_{aibj} = f^{ab}_{ij} =
\theta^{ab} \alpha^{ab}_i\alpha^{ab}_j = \theta' \beta_a \beta_b \alpha^{ab}_i\alpha^{ab}_j,$$
so fixing $i = i_0$ we get that
$$\alpha^{ab}_j = \beta_{i_0} \beta_j \beta_a \beta_b \alpha^{ab}_{i_0}\alpha^{i_0j}_a\alpha^{i_0j}_b;$$
substituting this and \eq{ttt} back in \eq{Z5}, we get that
\begin{eqnarray*}
g_{aibj} & = & \theta^{ab} \alpha^{ab}_{i} \alpha^{ab}_j \\
& = & (\theta' \beta_a \beta_b) (\beta_{i_0} \beta_i \beta_a \beta_b \alpha^{ab}_{i_0}\alpha^{i_0i}_a\alpha^{i_0i}_b) (\beta_{i_0} \beta_j \beta_a \beta_b \alpha^{ab}_{i_0}\alpha^{i_0j}_a\alpha^{i_0j}_b) \\
& = & \theta' \cdot \beta_a \beta_b \beta_i \beta_j \cdot
\alpha^{i_0i}_a\alpha^{i_0i}_b
\alpha^{i_0j}_a\alpha^{i_0j}_b.
\end{eqnarray*}
Namely, defining $p \in \Cvec$ by $p_{ck} = \alpha_{c}^{i_0k}$,
\begin{equation}\label{gggg}
g = \theta' \cdot \delta^{02}(\beta) \cdot \vdel^1p.
\end{equation}
This shows that in fact $\vdel^1p$ is a well-defined element of $C^2(X)$, proving by \Rref{62} that~$p \in \CvecI$ and $g \in \pm \Im(\delta^{02})B^2(\vec{X})$.
\end{proof}

\begin{prop}\label{pm7}
$Z^2(X) \cap \Im(\delta^{02}) \sub B^2(\vec{X})$.
\end{prop}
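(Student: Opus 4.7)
The plan is to use the relation $\delta^2 \delta^{02} = \delta^{03}$ from \eq{03use} to translate membership in $Z^2(X) \cap \Im(\delta^{02})$ into a combinatorial condition on the underlying $0$-cochain, and then exploit the completeness of $X$.

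First, I would take any $g \in Z^2(X) \cap \Im(\delta^{02})$ and write $g = \delta^{02}\beta$ for some $\beta \in C^0(X)$. Then \eq{03use} gives $\one = \delta^2 g = \delta^2\delta^{02}\beta = \delta^{03}\beta$. Since $X$ is the complete cubical complex of dimension $3$, any $8$ distinct vertices form the vertex set of a $3$-cell, so the condition $\delta^{03}\beta = \one$ is exactly the statement
$$\prod_{v \in S}\beta_v = 1 \qquad \text{for every $8$-subset } S \sub \cells[X]{0}.$$

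Next, I would argue that this forces $\beta$ to be constant. Fix any two distinct vertices $u,v$. Because $\card{\cells[X]{0}} \geq 10 \geq 9$, the complement of $\set{u,v}$ contains a $7$-subset $T$. Applying the constraint to the two $8$-subsets $T \cup \set{u}$ and $T \cup \set{v}$ and taking the ratio, every factor in $T$ cancels and we obtain $\beta_u = \beta_v$. Since $u,v$ were arbitrary, $\beta \equiv \epsilon$ for some $\epsilon \in \mu_2$.

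Finally, with $\beta$ constant, $(\delta^{02}\beta)_{ijk\ell} = \epsilon^4 = 1$, so $g = \one$. But $\one = \vdel^1 \one$ with $\one \in \CvecI$, so $g \in B^2(\vec{X})$ as required. There is no real obstacle here; the only subtlety is recognizing that the completeness of $X$, combined with the hypothesis $\card{\cells[X]{0}} \geq 10$, provides enough room to vary a single vertex in an $8$-subset, which is what collapses the kernel of $\delta^{03}$ down to the constant functions.
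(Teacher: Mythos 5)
Your proof is correct and follows the same route as the paper: use $\delta^{03}=\delta^2\delta^{02}$ from \eq{03use}, compare two $3$-cells sharing $7$ vertices (which $\card{\cells[X]{0}}\geq 10$ permits) to force the $0$-cochain to be constant, and conclude $\delta^{02}\beta=\one \in B^2(\vec{X})$. You merely spell out the vertex-varying step and the final inclusion of $\one$ more explicitly than the paper does.
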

\begin{proof}
Let $\alpha \in C^0(X)$, and assume $\delta^{02}\alpha \in Z^2(X)$.
By \eq{03use}, $\delta^{03}\alpha = \delta^2\delta^{02}\alpha = \one$.
Applying this equality to arbitrary pairs of $3$-cells with $7$ joint vertices, we conclude that $\alpha$ is a constant, and then $\delta^{02}\alpha = \alpha^4 = \one$.
 \end{proof}

\begin{cor}\label{Z2B2}
$Z^2(X) = \pm B^2(\vec{X})$ and $Z^2(X)/B^2(\vec{X}) 
\isom \mu_2$.
\end{cor}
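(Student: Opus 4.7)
The plan is to split the corollary into two steps: the equality $Z^2(X) = \pm B^2(\vec{X})$, which will follow from \Tref{goodC2} and \Pref{pm7}, and the isomorphism $Z^2(X)/B^2(\vec{X}) \isom \mu_2$, which reduces to showing $-\one \notin B^2(\vec{X})$.

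For the first step, given $g \in Z^2(X)$, I would apply \Tref{goodC2} to write $g = \theta' \cdot h \cdot \delta^{02}\beta$ with $\theta' \in \mu_2$, $h \in B^2(\vec{X})$, and $\beta \in C^0(X)$. Since $-\one \in Z^2(X)$ (every $3$-cube has six faces) and $h \in Z^2(X)$ by \Pref{B2vecinZ2}, the product $\delta^{02}\beta = \theta' g h$ lies in $Z^2(X) \cap \Im(\delta^{02})$, which by \Pref{pm7} is contained in $B^2(\vec{X})$. Hence $g \in \pm B^2(\vec{X})$. The reverse inclusion is immediate from \Pref{B2vecinZ2} together with $-\one \in Z^2(X)$.

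The main obstacle is showing $-\one \notin B^2(\vec{X})$. Suppose for contradiction that $\vdel^1 f = -\one$ for some $f \in \CvecI$. The computation from the proof of \Pref{Z1vec} gives $\Delta(Nf) = (\vdel^1 f)_{iajk}(\vdel^1 f)_{jaki}(\vdel^1 f)_{kaij} = (-1)^3 = -1$, so by \Pref{KerDelta} and \Tref{C1t} there is $\alpha \in C^0(X)$ with $Nf = -\delta^0\alpha$. Multiplying $f$ by $\eta_h\alpha$, which contributes trivially to $\vdel^1$ (the computation in the proof of \Pref{deltaC1''} gives $\vdel^1(\eta_h\alpha) = \one$), I may assume $f$ is antisymmetric, $f_{ij} = -f_{ji}$.

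Under antisymmetry, the equation $\vdel^1 f = -\one$, originally $f_{ab}f_{cb}f_{cd}f_{ad} = -1$, rewrites as the directed $4$-cycle identity $f_{ab}f_{bc}f_{cd}f_{da} = -1$. Setting $p(a,b,c) = f_{ab}f_{bc}f_{ca} \in \mu_2$, cancellation of $f_{ab}^2$ together with the cycle identity applied to $(b,c,a,d)$ will yield
$$p(a,b,c) \cdot p(a,b,d) = f_{bc}f_{bd}f_{ca}f_{da} = -1$$
for any four distinct vertices. Applying this with $a=1$, $b=2$ to the three pairs $\{3,4\}$, $\{3,5\}$, $\{4,5\}$ gives three identities whose product has left-hand side $(p(1,2,3)p(1,2,4)p(1,2,5))^2 = 1$ but right-hand side $(-1)^3 = -1$, the desired contradiction. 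This will finish the proof, since $Z^2(X)/B^2(\vec{X})$, generated by the class of $-\one$, is then isomorphic to $\mu_2$.
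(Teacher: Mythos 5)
Your argument is correct, and its overall architecture matches the paper's: both halves rest on \Tref{goodC2}, \Pref{pm7} and \Pref{B2vecinZ2}, plus the key point that $-\one \not\in B^2(\vec{X})$. Two differences are worth noting. For the equality $Z^2(X) = \pm B^2(\vec{X})$, the paper argues at the level of subgroups, invoking modularity of the subgroup lattice in a chain of equalities; you instead unpack the same content element-wise, writing $g = \theta' h\, \delta^{02}\beta$ and observing that $\delta^{02}\beta = \theta' h g \in Z^2(X) \cap \Im(\delta^{02}) \sub B^2(\vec{X})$, which is a perfectly valid (and arguably more transparent) substitute. For $-\one \not\in B^2(\vec{X})$, the paper has a much shorter argument that needs none of your machinery: if $\vdel^1 f = -\one$, then evaluating on the three $2$-cells $(aibj)$, $(aibk)$, $(ajbk)$ gives $-1 = f_{ai}f_{aj}f_{bi}f_{bj} = f_{ai}f_{ak}f_{bi}f_{bk} = f_{aj}f_{ak}f_{bj}f_{bk}$, and multiplying the three equations yields $+1 = -1$ by squaring, with no need to compute $\Delta(Nf)$, normalize by $\eta_h\alpha$, or pass to an antisymmetric representative. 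Your detour through the antisymmetric reduction and the directed $4$-cycle identity is sound (and your verification that $\vdel^1(\eta_h\alpha)=\one$ and that multiplying by $\eta_h\alpha$ makes $Nf = -\one$ is right), but it buys nothing here; the direct cancellation over five vertices is the same parity phenomenon with less overhead. The concluding step, that the quotient is generated by the class of $-\one$ and hence isomorphic to $\mu_2$, agrees with the paper.
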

\begin{proof}
Recall that the lattice of subgroups in an abelian group is modular. Notice that $-\one \in Z^2(X)$. Now
\begin{eqnarray*}
Z^2(X) & = & Z^2(X) \cap (Z^2(X)\Im(\delta^{02})) \\
& \stackrel{\rm{Thm}~\ref{goodC2}}{=} & Z^2(X) \cap (\pm B^2(\vec{X})\Im(\delta^{02})) \\
& = & \pm [Z^2(X) \cap (B^2(\vec{X})\Im(\delta^{02}))] \\
& \stackrel{\rm{modularity}}{=} & \pm [(Z^2(X) \cap \Im(\delta^{02}))B^2(\vec{X})] \\
& \stackrel{{\rm{Prop}}~\ref{pm7}}{=} & \pm B^2(\vec{X}).
\end{eqnarray*}
It remains to show that $-\one \not \in B^2(\vec{X})$. Otherwise, $-\one = \vdel^1f$ for some $f \in \CvecI$. Let $a,b,i,j,k$ be distinct vertices, and consider the three $2$-cells $(aibj), (aibk), (ajbk)$: by assumption we have that
$$-1 = f_{ai}f_{aj}f_{bi}f_{bj} = f_{ai}f_{ak}f_{bi}f_{bk} = f_{aj}f_{ak}f_{bj}f_{bk},$$
but multiplication results in a contradiction.
\end{proof}

\subsection{$B^2(\vec{X})$ and $B^2(X)$}\label{ss:[-1]}
Fix a linear ordering $<$ of the vertices. Let $[-1] \in C^2(X)$ be the function defined for a $2$-cell~$c$ by
$$\begin{cases}
[-1]_c = +1 & \mbox{if the 
vertices of $c$ can be read in increasing order},
\\
[-1]_c = -1 & \mbox{otherwise}.
\end{cases}$$
We also tautologically set $[+1]_c = +1$.

Let $\psi \in \CvecI$ be the {\bf{order function}} associated to~$<$, defined by $\psi_{ij}= +1$ if $i<j$ and $\psi_{ij} = -1$ otherwise. Clearly $N\psi = -\one$.

\begin{rem}\label{thisone}
$\vdel^1\psi = -[-1]$. The diagram below depicts the three possible orderings of the vertices of a $2$-cell, with the values of $\psi$ denoted on the edges and the value of $\delta^1\psi$ circled in the center, indeed being opposite to the respective value of $[-1]$.
$$\xymatrix{
1 \ar@{->}[r]^+ \ar@{->}[d]_+ \ar@{}[rd]|*+[o][F-]{-} & 2 \ar@{<-}[d]^{-}\\
4 \ar@{<-}[r]_{+} &  3  \\
} \qquad \xymatrix{
1 \ar@{->}[r]^+ \ar@{->}[d]_+ \ar@{}[rd]|*+[o][F-]{+} & 3 \ar@{<-}[d]^{-}\\
2 \ar@{<-}[r]_{-} & 4 \\
} \qquad \xymatrix{
1 \ar@{->}[r]^+ \ar@{->}[d]_+ \ar@{}[rd]|*+[o][F-]{+} & 4 \ar@{<-}[d]^{+}\\
3 \ar@{<-}[r]_{+} & 2 \\
}$$
\end{rem}

\begin{thm}\label{H2=}
The second cohomology of $X$ is  $H^2(X) \isom \mu_2 \times \mu_2$.
Explicitly, $Z^2(X) = \sg{-\one, [-1], B^2(X)}$ and
$B^2(\vec{X}) = \sg{-[-1],B^2(X)}$.
\end{thm}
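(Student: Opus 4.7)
The plan is to leverage Corollary~\ref{Z2B2}, which already gives $Z^2(X) = \pm B^2(\vec X)$, and reduce everything to an analysis of $B^2(\vec X)/B^2(X)$. The bridge between the ``vector'' boundaries and ordinary boundaries is supplied by Remark~\ref{thisone}: the order function $\psi\in\CvecI$ satisfies $\vdel^1\psi=-[-1]$ and $N\psi=-\one$, so $-[-1]\in B^2(\vec X)$, while Proposition~\ref{B2vecinZ2} already gives $B^2(X)\subseteq B^2(\vec X)$. Thus $\sg{-[-1],B^2(X)}\subseteq B^2(\vec X)$ for free.

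For the reverse inclusion I would take an arbitrary $f\in\CvecI$ and split on whether $Nf\in B^1(X)$. By Theorem~\ref{C1t} combined with Proposition~\ref{KerDelta}, $Nf=\theta\cdot\delta^0\alpha$ for a well-defined $\theta\in\mu_2$. If $\theta=+1$ then $f\in\CvecII$ and Proposition~\ref{deltaC1''} puts $\vdel^1f\in B^2(X)$. If $\theta=-1$, replace $f$ by $f\psi$: then $N(f\psi)=\theta\cdot(-\one)\cdot\delta^0\alpha\in B^1(X)$, so $f\psi\in\CvecII$ and Proposition~\ref{deltaC1''} gives $\vdel^1(f\psi)\in B^2(X)$. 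Since $\vdel^1$ is multiplicative, $\vdel^1 f=\vdel^1(f\psi)\cdot\vdel^1\psi\in B^2(X)\cdot(-[-1])$. This proves $B^2(\vec X)=\sg{-[-1],B^2(X)}$, and combined with Corollary~\ref{Z2B2} one gets $Z^2(X)=\pm B^2(\vec X)=\sg{-\one,[-1],B^2(X)}$ immediately.

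It remains to identify the quotient $H^2(X)=Z^2(X)/B^2(X)$. It is generated by the two involutions $a=[-\one]$ and $b=[[-1]]$, so it is a quotient of $\mu_2\times\mu_2$; I need to rule out every nontrivial relation. The class $a$ is nonzero because $-\one\notin B^2(\vec X)\supseteq B^2(X)$ was already established inside the proof of Corollary~\ref{Z2B2}. The class $b$ is nonzero because if $[-1]\in B^2(X)$ then $-\one=(-[-1])\cdot[-1]$ would lie in $B^2(\vec X)\cdot B^2(X)=B^2(\vec X)$, a contradiction. The main obstacle is ruling out $a=b$, i.e.\ showing $-[-1]\notin B^2(X)$; the idea is that if $-[-1]=\delta^1g$ for some $g\in C^1(X)$, then $\vdel^1(\psi g)=\vdel^1\psi\cdot\delta^1 g=\one$, so $\psi g\in Z^1(\vec X)\subseteq\CvecII$ by Proposition~\ref{Z1vec}, forcing $N(\psi g)=-\one$ to lie in $B^1(X)=\Ker(\Delta)$; but $\Delta(-\one)=-1$, a contradiction. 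Hence $a$, $b$ and $ab$ are all nontrivial, so $H^2(X)\isom\mu_2\times\mu_2$, finishing the proof.
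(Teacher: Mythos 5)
Your proposal is correct and follows essentially the same route as the paper: it rests on the same ingredients (the order function $\psi$ with $N\psi=-\one$ and $\vdel^1\psi=-[-1]$, Proposition~\ref{deltaC1''}, the computation $H^1(X)\isom\mu_2$ via $\Delta$, and Corollary~\ref{Z2B2} together with $-\one\notin B^2(\vec{X})$). The only difference is presentational: you argue element-wise and rule out the three possible collapses of $\sg{[-\one],[[-1]]}$ one by one (your direct check that $-[-1]\notin B^2(X)$ is in effect a re-proof of the backward direction of Proposition~\ref{deltaC1''}), where the paper passes to the induced maps on $\CvecI/\CvecII$ and finishes by an index count.
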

\begin{proof}
We first show that $B^2(\vec{X})/B^2(X) \isom \mu_2$. The argument will be easier to follow using Figure~\ref{fig1}.
By definition of $\CvecII$, the induced norm map $$N \,\co\, \CvecI/\CvecII \,\lra\, Z^1(X)/B^1(X)$$ is a well-defined embedding into $H^1(X) = \mu_2$ (\Cref{H1=mu2}).
Similarly, by the definition of $B^2(\vec{X})$ and \Pref{deltaC1''}, $\vdel^1$ induces an isomorphism from $\CvecI/\CvecII$ to $B^2(\vec{X})/B^2(X)$. To conclude the proof, we will show that
$\CvecI \neq \CvecII$. We have that $-\one \not \in B^1(X)$ because $\Delta(-\one) = -1$; so since $\psi \in \CvecI$ chosen above satisfies $N\psi = -\one$, we conclude that $\psi \not \in \CvecII$. It follows that $\vdel^1\psi = -[-1]$ generates $B^2(\vec{X})/B^2(X) \isom \mu_2$.

By \Cref{Z2B2}, $Z^2(X) = \pm B^2(\vec{X}) = (\pm \one) [\pm 1]B^2(X)$, and the index $\dimcol{Z^2(X)}{B^2(X)}$ is equal to~$4$, but the quotient is a group of exponent~$2$, so it equals $\mu_2 \times \mu_2$.
\end{proof}

\begin{cor}\label{finZ2}
Let $g \in Z^2(X)$. Then there are unique $\theta,\pi \in \mu_2$, and some $f \in B^2(X)$, such that
\begin{equation}\label{finZ2:}
g = \theta \cdot [\pi] \cdot \delta^1 f.
\end{equation}
\end{cor}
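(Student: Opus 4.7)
The plan is to read \Cref{finZ2} as a direct repackaging of \Tref{H2=}; no new ideas are needed beyond naming the four representatives of $H^2(X)$ by a pair of binary labels.

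For \textbf{existence}, I would apply the identity $Z^2(X) = \sg{-\one,[-1],B^2(X)}$ from \Tref{H2=}. Both generators $-\one$ and $[-1]$ have order dividing~$2$ modulo $B^2(X)$ (the former trivially, the latter because $[-1]^2 = \one$), so every $g \in Z^2(X)$ admits a decomposition $g = (-\one)^a \cdot [-1]^b \cdot h$ with $a,b \in \set{0,1}$ and $h \in B^2(X) = \Im(\delta^1)$. Setting $\theta = (-1)^a$, $\pi = (-1)^b$, and $h = \delta^1 f$, and using the convention $[+1] = \one$, yields the required form $g = \theta \cdot [\pi] \cdot \delta^1 f$.

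For \textbf{uniqueness}, I would suppose two such decompositions coincide: $\theta_1 \cdot [\pi_1] \cdot \delta^1 f_1 = \theta_2 \cdot [\pi_2] \cdot \delta^1 f_2$. This rearranges to $\theta_1\theta_2 \cdot [\pi_1\pi_2] \in B^2(X)$, using $[\pi_1][\pi_2] = [\pi_1\pi_2]$ (again because $[-1]^2 = \one$). If $\pi_1 = \pi_2$, this reads $\theta_1\theta_2 \in B^2(X)$; since $-\one \notin B^2(\vec{X}) \supseteq B^2(X)$, as established at the end of the proof of \Cref{Z2B2}, I conclude $\theta_1 = \theta_2$. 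If $\pi_1 \neq \pi_2$, this reads $\pm [-1] \in B^2(X)$; but \Tref{H2=} shows $\vdel^1\psi = -[-1]$ generates the nontrivial quotient $B^2(\vec{X})/B^2(X) \isom \mu_2$, so $-[-1] \notin B^2(X)$, and then $[-1] \notin B^2(X)$ as well (otherwise their product $-\one$ would lie in $B^2(X)$). Either subcase contradicts the hypothesis, forcing $\pi_1 = \pi_2$ and hence $\theta_1 = \theta_2$.

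I do not anticipate any substantive obstacle: the heavy lifting---exhibiting the four distinct cosets $B^2(X)$, $-B^2(X)$, $[-1]B^2(X)$, $-[-1]B^2(X)$ of $H^2(X) \isom \mu_2 \times \mu_2$, and in particular verifying that $-\one$, $[-1]$, $-[-1]$ all avoid $B^2(X)$---is already done in \Tref{H2=} and \Cref{Z2B2}. The corollary merely assigns each coset its pair of labels in $\mu_2 \times \mu_2$.
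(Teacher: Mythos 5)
Your overall reading is the paper's own: \Cref{finZ2} is stated with no separate proof precisely because it is an immediate repackaging of \Tref{H2=}, and your existence argument (write $g=(-\one)^a[-1]^b\cdot\delta^1 f$ using $Z^2(X)=\sg{-\one,[-1],B^2(X)}$) together with the case $\pi_1=\pi_2$ of your uniqueness argument (using $-\one\notin B^2(\vec{X})\supseteq B^2(X)$ from \Cref{Z2B2}) is exactly what is intended.

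One parenthetical step does not work as written. To rule out $[-1]\in B^2(X)$ you say ``otherwise their product $-\one$ would lie in $B^2(X)$''; but to place $-\one=[-1]\cdot(-[-1])$ in $B^2(X)$ you would need \emph{both} factors in $B^2(X)$, and you had just established $-[-1]\notin B^2(X)$, so no contradiction is produced this way. (Indeed, the two facts $-\one\notin B^2(X)$ and $-[-1]\notin B^2(X)$ alone are consistent with $[-1]\in B^2(X)$, since then the classes of $-\one$ and $-[-1]$ would simply coincide.) The needed fact is true and available in two ways. Either cite \Tref{H2=} directly: it asserts that the index of $B^2(X)$ in $Z^2(X)$ is $4$ while the quotient is generated by the classes of $-\one$ and $[-1]$, so these two classes are independent and the four cosets $\theta[\pi]B^2(X)$ are pairwise distinct --- which is the entire content of uniqueness. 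Or argue inside $B^2(\vec{X})$: if $[-1]\in B^2(X)\sub B^2(\vec{X})$, then $-\one=[-1]\cdot(-[-1])\in B^2(\vec{X})$ because $-[-1]=\vdel^1\psi\in B^2(\vec{X})$, contradicting $-\one\notin B^2(\vec{X})$ as proved at the end of \Cref{Z2B2}. With that one repair your proof coincides with the paper's implicit one.
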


\begin{rem}
Let $<$ and $<'$ be two linear orders on the set of vertices. Let $[-1]$ and $[-1]'$ be the corresponding functions as defined above. By \Tref{H2=},
$[-1][-1]' \in B^2(X)$, which we now demonstrate explicitly. Let $\psi$ and $\psi'$ be the order functions associated to the order relations. Since $N\psi = N\psi' = -\one$, $N(\psi\psi') = \one$, so that $\psi\psi' \in C^1(X)$, and as computed in \Rref{thisone}, $\delta^1(\psi\psi') = [-1][-1]'$.
\end{rem}

\subsection{Detecting maps}\label{ss:Delta}

We define $\Delta', \Delta'' \co Z^2(X) \ra \mu_2$ by setting
$$\Delta'(g) = g_{ijk\ell}\, g_{ikj\ell}\, g_{ij\ell k}, \qquad \mbox{and} \qquad \Delta''(g) = g_{aibj}\,g_{ajbk}\,g_{akbi};$$
where the vertices are arbitrary.

\begin{prop}\label{const'''}
The maps $\Delta'$ and $\Delta''$ are well-defined on $Z^2(X)$.
\end{prop}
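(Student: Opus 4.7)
The plan is to exploit the decomposition $Z^2(X) = \pm B^2(\vec{X})$ from \Cref{Z2B2}: every $g \in Z^2(X)$ can be written as $g = \epsilon \cdot \vdel^1 f$ for some $\epsilon \in \set{\one,-\one}$ and some $f \in \CvecI$. Since the formulas defining $\Delta'(g)$ and $\Delta''(g)$ are pointwise multiplicative in~$g$, it suffices to show each gives a vertex-independent answer on each factor. The contribution of $\epsilon$ to both products is just $\epsilon^3 = \epsilon$, tautologically vertex-independent, so the real work is to compute $\Delta'(\vdel^1 f)$ and $\Delta''(\vdel^1 f)$ and to check that they depend only on~$f$.

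For $\Delta'$, I would expand the product $(\vdel^1 f)_{ijk\ell}\cdot(\vdel^1 f)_{ikj\ell}\cdot(\vdel^1 f)_{ij\ell k}$ using \eq{del1def}, obtaining twelve directed-edge factors. In each of the three squares~$i$ is a source, so the three edges $f_{ij}, f_{ik}, f_{i\ell}$ each appear in exactly two of the three squares and thus cancel. The remaining six factors pair as $f_{kj}f_{jk} = (Nf)_{jk}$, $f_{k\ell}f_{\ell k} = (Nf)_{k\ell}$, and $f_{j\ell}f_{\ell j} = (Nf)_{j\ell}$. Hence $\Delta'(\vdel^1 f) = (Nf)_{jk}(Nf)_{k\ell}(Nf)_{\ell j}$, which in the notation of \eq{Deltadefall} is $\Delta(Nf)$. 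Since $f \in \CvecI$ implies $Nf \in Z^1(X)$, \Lref{bT} guarantees that this product is a constant in $\mu_2$ independent of the triple $\set{j,k,\ell}$; it is also manifestly independent of~$i$. Therefore $\Delta'(g) = \epsilon \cdot \Delta(Nf)$ is vertex-independent.

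For $\Delta''$, I would expand $(\vdel^1 f)_{aibj}\cdot(\vdel^1 f)_{ajbk}\cdot(\vdel^1 f)_{akbi}$ in the same way. Here the symmetry is tighter: in every one of the three squares, $a$ and $b$ are the sources and the sinks are two of $\set{i,j,k}$. Since each of $i,j,k$ serves as a sink in exactly two of the three squares, each of the six directed edges $f_{ai}, f_{bi}, f_{aj}, f_{bj}, f_{ak}, f_{bk}$ appears exactly twice and cancels. Thus $\Delta''(\vdel^1 f) = 1$ and $\Delta''(g) = \epsilon$ is vertex-independent.

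The only real obstacle is the edge-bookkeeping for $\Delta'$. It becomes conceptually transparent once one observes that $i$ is common to all three squares and plays the role of a common source in each $\vdel^1$-term, so after cancellation the surviving product runs exactly over the triangle on the complementary triple $\set{j,k,\ell}$; this is precisely the setup of \eq{Deltadefall}, which lets \Lref{bT} furnish the constancy. The corresponding count for $\Delta''$ is easier because the two distinguished vertices $a,b$ are simultaneously the sources of every square, so every edge automatically cancels without needing any cocycle hypothesis.
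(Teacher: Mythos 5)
Your proof is correct, and it takes a genuinely different route from the paper's. The paper proves well-definedness via \Cref{finZ2}, i.e.\ the decomposition $Z^2(X)=\sg{-\one,[-1],B^2(X)}$, and checks the three generators separately: the constant $-\one$ (three odd entries), coboundaries $\delta^1 f$ with $f\in C^1(X)$ (each pair of the relevant graph $K_4$ or $K_{2,3}$ is covered twice), and the order-dependent function $[-1]$, the last requiring a small combinatorial case analysis of arcs on $K_{2,3}$ to get $\Delta''([-1])=-1$. You instead use the coarser decomposition $Z^2(X)=\pm B^2(\vec X)$ from \Cref{Z2B2} and compute directly on $\vdel^1 f$ with $f\in\CvecI$: your cancellation argument for $\Delta'$ is exactly the identity $\Delta'\circ\vdel^1=\Delta\circ N$ (which the paper records separately as \Rref{T8}), after which \Lref{bT} gives constancy since $Nf\in Z^1(X)$; and for $\Delta''$ every directed edge appears twice, so $\Delta''(\vdel^1f)=1$ unconditionally. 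There is no circularity, since \Cref{Z2B2} is established before the proposition and independently of $\Delta',\Delta''$. Your route buys a cleaner, uniform proof of well-definedness that avoids the $[-1]$ case analysis; the paper's route buys, as a by-product, the explicit value table $\Delta'(-\one)=-1$, $\Delta'([-1])=+1$, $\Delta''([-1])=-1$, $\Delta'(\delta^1f)=\Delta''(\delta^1f)=+1$, which is what feeds \Cref{abouthere}. (Your approach recovers these values too, with a touch more work: multiplicativity plus $\Delta'(-\one)=\Delta''(-\one)=-1$ and $\Delta'(\vdel^1\psi)=\Delta(N\psi)=\Delta(-\one)=-1$, $\Delta''(\vdel^1\psi)=1$ for the order function $\psi$, using $\vdel^1\psi=-[-1]$ from \Rref{thisone}.)
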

\begin{proof}
Applying \Cref{finZ2}, we need to verify the claim for three types of functions.
\begin{enumerate}
\item Since each formula involves three entries of the function, $\Delta'(-\one) = \Delta''(-\one) = -1$ are well-defined.
\item The consecutive indices involved in $\Delta'$ and $\Delta''$ cover each pair in the graphs ($K_4$ and $K_{2,3}$, respectively) twice, hence $\Delta'(\delta^1f) = \Delta''(\delta^1f) = 1$ are well-defined.
\item Now consider $g = [-1]$. Since $\Delta'(g)$ is the product of the three possible orderings of the vertices of a square, we have that $\Delta'([-1]) = (+1)(-1)(-1) = 1$. Now consider $\Delta''([-1])$. There are $5!$ ways to order the indices $a,b,i,j,k$, but only $\binom{5}{2} = 10$ up to symmetry of the graph $K_{2,3}$. We note that $[-1]_{aibj} = +1$ if and only if the arcs connecting $i$ with~$j$ and $a$ with~$b$ through the upper half plane intersect. It is now easy to see that $[-1]_{aibj}$, $[-1]_{ajbk}$ and $[-1]_{akbi}$ are all $-1$ if $a,b$ are consecutive or if $i,j,k$ are consecutive; and that they are equal to $+1,+1,-1$ otherwise. In both cases the product is $-1$, so $\Delta''([-1]) = -1$ is well-defined.
\end{enumerate}
\end{proof}

More explicitly, we have
$$\Delta'(-\one) = -1, \quad \Delta'([-1]) = +1, \quad \Delta'(\delta^1f) = +1;$$
$$\Delta''(-\one) = -1, \quad \Delta''([-1]) = -1, \quad \Delta''(\delta^1f) = +1$$
for every $f \in C^1(X)$.

\begin{cor}\label{abouthere}
For $g \in Z^2(X)$,
\begin{enumerate}
\item $g \in B^2(\vec{X})$ if and only if $\Delta''(g) = 1$, and
\item $g \in B^2(X)$ if and only if $\Delta'(g) = \Delta''(g) = 1$.
\end{enumerate}
Moreover, in the presentation \eq{finZ2:}, $\theta = \Delta'(g)$ and $\pi = \Delta'(g)\Delta''(g)$.
\end{cor}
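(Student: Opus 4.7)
The plan is to combine the unique decomposition from \Cref{finZ2} with the table of values of $\Delta'$ and $\Delta''$ on the generators $-\one$, $[-1]$, and $\delta^1 f$ listed just before the statement. The first step is to note that $\Delta'$ and $\Delta''$ are group homomorphisms $Z^2(X) \ra \mu_2$: each evaluates its input at three prescribed tuples and multiplies the resulting values, so pointwise multiplication of $2$-cochains corresponds to multiplication in $\mu_2$. This homomorphism property is what lets me evaluate $\Delta'$ and $\Delta''$ on $g = \theta \cdot [\pi] \cdot \delta^1 f$ factor by factor.

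Next I would carry out the computation. On the factor $\theta \cdot \one$, both maps return $\theta$ (trivially when $\theta = +1$, by the table when $\theta = -1$). On $[\pi]$, the map $\Delta'$ returns $+1$ and $\Delta''$ returns $\pi$. On $\delta^1 f$ both return $+1$. Multiplying, $\Delta'(g) = \theta$ and $\Delta''(g) = \theta \pi$, so $\theta = \Delta'(g)$ and $\pi = \Delta'(g)\Delta''(g)$, which establishes the ``moreover'' clause.

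For the two membership characterizations I would appeal to \Tref{H2=}: the class of $g$ in $Z^2(X)/B^2(X) \isom \mu_2 \times \mu_2$ is parametrized by $(\theta,\pi)$, and $B^2(\vec{X})/B^2(X)$ is the cyclic subgroup generated by the image of $-[-1]$, consisting exactly of the pairs with $\theta = \pi$. Hence $g \in B^2(X)$ is equivalent to $\theta = \pi = 1$, i.e., $\Delta'(g) = \Delta''(g) = 1$; and $g \in B^2(\vec{X})$ is equivalent to $\theta \pi = 1$, i.e., $\Delta''(g) = 1$ (since $\theta \pi = \Delta'(g) \cdot \Delta'(g)\Delta''(g) = \Delta''(g)$ in $\mu_2$).

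I expect no serious obstacle: the heavy computational lifting was done in \Pref{const'''}, and what remains is bookkeeping. The one subtlety worth flagging is the homomorphism property of $\Delta'$ and $\Delta''$ on $Z^2(X)$, which is what justifies reading off $\theta$ and $\pi$ directly from the tabulated values of the generators rather than having to re-examine the structure of the decomposition.
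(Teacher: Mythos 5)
Your proposal is correct and follows essentially the same route the paper intends: combine the unique decomposition of \Cref{finZ2} with the tabulated values of $\Delta'$ and $\Delta''$ on $-\one$, $[-1]$ and $\delta^1f$, using that both maps are homomorphisms on $Z^2(X)$, and then read off the membership criteria from the description of $Z^2(X)/B^2(X)$ and $B^2(\vec{X})/B^2(X)$ in \Tref{H2=}. The bookkeeping you carry out (in particular $\Delta'(g)=\theta$, $\Delta''(g)=\theta\pi$) is exactly what the corollary's ``moreover'' clause records.
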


Although unnecessary in this section, we record an explicit proof for the fact that $\Delta'$ is well-defined on $Z^2(X)$.
\begin{prop}\label{13.9o}
Let $g \in C^2(X)$. Let $i,j,k,\ell$ and $i',j',k',\ell'$ be eight distinct vertices. Then
\begin{equation}\label{Thet}
(\Delta'g)_{ijk\ell}(\Delta'g)_{i'j'k'\ell'}
\end{equation}
is a product of three entries of $\delta^2g$.
\end{prop}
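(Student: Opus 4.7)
My plan is to exhibit the left-hand side of \eq{Thet} as the product of $\delta^2g$ over three carefully chosen $3$-cubes on the eight given vertices. Recall that for a cube whose top $2$-face has vertices $(a,b,c,d)$ in cyclic square order and whose bottom face is $(a',b',c',d')$ with each $\alpha$ joined by an edge of the cube to $\alpha'$, the entry $(\delta^2 g)$ at that cube is a product of six $g$-values: the top, the bottom, and the four side squares $\{a,b,a',b'\}$, $\{b,c,b',c'\}$, $\{c,d,c',d'\}$, $\{d,a,d',a'\}$ (viewed as unordered $2$-cells, since $g\in C^2(X)$).

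The three cubes I would use correspond to the three distinct square-orderings of $\{i,j,k,\ell\}$ that appear in $(\Delta'g)_{ijk\ell}$, paired with the matching orderings of $\{i',j',k',\ell'\}$ under the bijection $\alpha\mapsto\alpha'$. Explicitly, take cubes with top/bottom pairs $(ijk\ell)/(i'j'k'\ell')$, $(ikj\ell)/(i'k'j'\ell')$, and $(ij\ell k)/(i'j'\ell'k')$. By construction, the product of the three top faces is $(\Delta'g)_{ijk\ell}$ and the product of the three bottom faces is $(\Delta'g)_{i'j'k'\ell'}$, so together they reproduce \eq{Thet}.

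The crux of the argument is that the twelve side faces cancel. The side faces of these three cubes are precisely the ``vertical'' squares $\{\alpha,\beta,\alpha',\beta'\}$, one for each pair $\{\alpha,\beta\}\subseteq\{i,j,k,\ell\}$, so the point is to check that each such pair arises as a cyclic edge of exactly two of the three chosen square-orderings. This follows from a simple counting fact about $K_4$: the three squares on a $4$-set together use $3\cdot 4 = 12$ edge-slots, distributed among the $\binom{4}{2}=6$ edges, and by symmetry each edge is used exactly twice. Since $g$ is symmetric on $2$-cells, the two occurrences of each side face contribute $g^2=1$, and the twelve side entries cancel in six pairs.

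The only step requiring care is the bookkeeping that the multiset of side faces across the three cubes consists of exactly two copies of each of the six vertical squares; this is a finite combinatorial verification rather than an analytic obstacle. Once it is in place, multiplying the three $(\delta^2 g)$-entries leaves only the six top/bottom terms, which equals $(\Delta'g)_{ijk\ell}(\Delta'g)_{i'j'k'\ell'}$, proving the proposition.
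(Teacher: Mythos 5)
Your proposal is correct and follows essentially the same route as the paper: the paper's proof also multiplies $\delta^2g$ over the three cubes with top/bottom pairs $(ijk\ell)/(i'j'k'\ell')$, $(ikj\ell)/(i'k'j'\ell')$, $(ij\ell k)/(i'j'\ell'k')$ and observes that the side faces cancel in pairs, leaving the six top and bottom squares. Your explicit bookkeeping that each of the six vertical squares $\{\alpha,\beta,\alpha',\beta'\}$ occurs in exactly two of the three cubes is precisely what the paper encodes by labelling the coinciding side faces in its figure.
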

\begin{proof}
Take the product of $\delta^2g$ over the three cubes on the vertices $i,j,k,\ell,i',j',k',\ell'$ depicted below. The ``side'' faces cancel, and only the product of the top and bottom faces remain, which is equal to $+1$ by assumption.
$$
\CUBE{i}{j}{k}{\ell}{i'}{j'}{k'}{\ell'}{{1}{3}{4}{6}}
\qquad
\CUBE{i}{k}{j}{\ell}{i'}{k'}{j'}{\ell'}{{2}{3}{5}{6}}
\qquad
\CUBE{i}{j}{\ell}{k}{i'}{j'}{\ell'}{k'}{{1}{5}{4}{2}}
$$
\end{proof}

\begin{rem}\label{T8}
$\Delta' \circ \vdel^1 = \Delta \circ N$ on $\CvecI$. Indeed, for $f \in \CvecI$ and arbitrary vertices $i,j,k,\ell$,
\begin{eqnarray*}
\Delta'(\vdel^1f) & =&  (f_{ij}f_{i\ell}f_{kj}f_{k\ell})(f_{ik}f_{i\ell}f_{jk}f_{j\ell})(f_{ij}f_{ik}f_{\ell j}f_{\ell k}) \\
& = & (f_{kj}f_{jk})(f_{k\ell}f_{\ell k})(f_{j\ell}f_{\ell j}) = \Delta(Nf).
\end{eqnarray*}
\end{rem}

\section{Similarity of functions}\label{sec:prel}

The elementary observations of this section will be repeatedly used in the testability proofs in the coming sections.  
We adopt the following notation, motivated by topological uniformity. Recall \Dref{Xk} for~$X^{[k]}$ and $\Fun{k}$.
\begin{defn}\label{simnot}
Let $f,f' \in \Fun{k}$. We write $f \sim_p f'$ if $$\norm{ff'} = \Pr{f_x \neq f'_{x}} \leq p,$$
where the probability is taken by letting the vector $x \in X^{[k]}$ be uniformly random.  The same notation is used for 
functions in $C^d(X)$ and function $X^{[k]} \times X^{[k]} \ra \mu_2$.
\end{defn}
We freely use the facts that $f\sim_p f'$ if and only if $ff' \sim_p \one$, and that $f\sim_p f' \sim_{p'} f''$ implies $f \sim_{p+p'} f''$.

\begin{lem}\label{half}
Let $f \co X^{[k]} \ra \mu_2$. Let $f\times f$ be the function $X^{[k]}\times X^{[k]} \ra \mu_2$ defined by $(f\times f)_{\alpha,\beta} = f_\alpha f_\beta$.
\begin{enumerate}
\item\label{halfii} If $f \sim_{p} \theta$ for a constant $\theta \in \mu_2$, then $f \times f\sim_{2p} \one$.
\item\label{halfipre} If $f\times f \sim_{p'} \one$, then $f \sim_{(\frac{1}{2}+p')p'} \theta$ for some constant $\theta \in \mu_2$.
\item\label{halfi} If $f\times f \sim_{p'} \one$, then $f \sim_{p'} \theta$ for some
constant $\theta \in \mu_2$.
\end{enumerate}
\end{lem}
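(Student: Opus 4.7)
The plan is to relate all three parts via the identity
$$\Pr{(f\times f)_{\alpha,\beta} = -1} \;=\; 2q_+ q_-,$$
where $q_\epsilon = \Pr{f_x = -\epsilon}$ for $\epsilon \in \mu_2$, and $\alpha,\beta \in X^{[k]}$ are chosen independently and uniformly. Everything reduces to bookkeeping around this basic identity, combined with the observation that one can choose $\theta$ to be a ``majority'' value of $f$.

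For part~(1), I would argue by union bound. The event $(f\times f)_{\alpha,\beta}\neq 1$ occurs only when exactly one of $f_\alpha, f_\beta$ disagrees with $\theta$, since $f_\alpha f_\beta = \theta^2 = 1$ whenever both equal $\theta$, and $f_\alpha f_\beta = (-\theta)^2 = 1$ whenever both equal $-\theta$. Thus this event is contained in $\{f_\alpha\neq\theta\}\cup\{f_\beta\neq\theta\}$, which has probability at most $2p$.

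For part~(3), I would choose $\theta \in \mu_2$ so that $q := \Pr{f_x \neq \theta} \leq \tfrac{1}{2}$, i.e.\ $\theta$ is the majority value of~$f$ (if $f$ is balanced, either choice works). Then the hypothesis $f\times f \sim_{p'} \one$ reads $2q(1-q) \leq p'$. Since $1-q \geq \tfrac{1}{2}$, we deduce
$$q \;\leq\; 2q(1-q) \;\leq\; p',$$
proving~(3). For part~(2), I would bootstrap the bound from~(3): the same inequality rewrites as $q \leq p'/2 + q^2$, and substituting $q \leq p'$ into the right-hand side yields
$$q \;\leq\; \tfrac{p'}{2} + p'^2 \;=\; \bigl(\tfrac{1}{2}+p'\bigr)p',$$
as required.

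The only conceptual step is the choice of $\theta$ as a majority value; without the bound $q\leq \tfrac12$ the inequality $q \leq 2q(1-q)$ fails, and both (2) and (3) become false. The rest is straightforward arithmetic and I anticipate no real obstacle.
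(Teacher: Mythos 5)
Your proposal is correct and follows essentially the same route as the paper: pick $\theta$ as the majority value, use the identity $\Pr{(f\times f)_{\alpha,\beta}\neq 1}=2q(1-q)$ (the paper writes $p'=2p(1-p)$), deduce $q\leq p'\leq 2q$, and bootstrap $q\leq p'$ into $q\leq \tfrac{p'}{2}+q^2$ to get the sharper bound of part~(2). The union-bound phrasing of part~(1) is just a cosmetic variant of the paper's inequality $2p(1-p)\leq 2p$.
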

\begin{proof}
Let $p' = \Pr{(f\times f)_{x,y} \neq 1}$ and $p = \Pr{f_x \neq \theta}$ where $\theta$ is the majority vote on the values of $f$, so that $p \leq \frac{1}{2}$. Since $p' = 2p(1-p)$, we have that $p \leq 
(\frac{1}{2}+p')p' \leq p' \leq 2p$. The fact that $\Pr{f_x \neq \theta} = p \leq (\frac{1}{2}+p')p'$ implies \eq{halfipre}. Since $(\frac{1}{2}+p')p' \leq p'$, $\mbox{\eq{halfipre}} \Rightarrow \mbox{\eq{halfi}}$. Finally $p'\leq 2p$ implies \eq{halfii}.
\end{proof}

Every formula of the form (say) $g_{ijk} = (\delta^0 \alpha)_{ij} (\delta^0 \alpha)_{jk}$ proves that if $\delta^0\alpha = \one$ then $g = \one$. This formula also shows that if $\delta^0 \alpha \sim_p \one$ then $g \sim_{2p} \one$, which is the kind of argument we will repeatedly need below. Indeed, when $(ijk) \in X^{[3]}$ is uniformly distributed, so are $(ij),(jk) \in X^{[2]}$.

However, if $a$ is a fixed vertex and $g_{ijk} = (\delta^0 \alpha)_{ia} (\delta^0 \alpha)_{ja}$, the first implication remains, but the probabilistic one breaks down, for $(\delta^0 \alpha)_{*a}$ need not be close to $\one \in C^0(X)$ even when $\delta^0 \alpha \sim \one \in C^1(X)$, since errors may congregate around~$a$ (the star notation is explained below). We thus need a way to describe connections of the former type. The name ``formal'' is alluding to both ``formulaic'' (for the defining formula \eq{formu}) and ``formational'' (for the formation $u_1,\dots,u_\ell$ on the given vertices.)

\begin{defn}\label{firstformaldef}
Let $X$ be a (simplicial or cubical) complex.
\begin{enumerate}
\item A function $f \in \Fun{k}$ is {\bf{formal in $g \in \Fun{k'}$}}, of length~$\ell$, if there are vectors $u_1,\dots,u_\ell \in X^{[k']}$ whose vertices are contained in $\set{v_1,\dots,v_k}$, such that
\begin{equation}\label{formu}
f_{\s(v_1),\dots,\s(v_k)} = \prod_{i=1}^{\ell} g_{\s(u_i)}
\end{equation}
for the permutations~$\s$ of the vertex set~$\cells[X]{0}$, extended in the obvious manner to act on all vectors.
\item An operator $\phi \co C^d(X) \ra \Fun{k}$ is {\bf{formal in
$\phi' \co C^d(X) \ra \Fun{k'}$}}, of length~$\ell$, if $\phi f$ is formal in $\phi' f$ via the same formula of length~$\ell$.
\end{enumerate}
\end{defn}

\begin{lem}\label{goon}
Suppose~$f$ is formal of length~$\ell$ in~$g$. If $g \sim_p \one$ then $f \sim_{\ell p} \one$.
\end{lem}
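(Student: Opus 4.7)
The plan is a direct union-bound argument, where the only subtlety is matching the distribution of the sampled argument of $f$ with that of each factor $g_{\s(u_i)}$.

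First I would write down what the formality formula \eq{formu} gives us pointwise: if every factor $g_{\s(u_i)}$ equals $1$ for $i=1,\dots,\ell$, then $f_{\s(v_1),\dots,\s(v_k)} = 1$. Contrapositively,
\[
f_{\s(v_1),\dots,\s(v_k)} \neq 1 \;\Longrightarrow\; \exists\, i \in \{1,\dots,\ell\} \text{ with } g_{\s(u_i)} \neq 1.
\]
By the union bound, for a uniformly random permutation $\s$ of $\cells[X]{0}$,
\[
\Pr_{\s}\bigl\{f_{\s(v_1),\dots,\s(v_k)} \neq 1\bigr\} \;\leq\; \sum_{i=1}^{\ell} \Pr_{\s}\bigl\{g_{\s(u_i)} \neq 1\bigr\}.
\]

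The key observation is that if $\s$ is drawn uniformly from the symmetric group on $\cells[X]{0}$, then for any fixed tuple $w \in X^{[m]}$ (in particular for $w=(v_1,\dots,v_k)$ and for each $w=u_i$) the tuple $\s(w)$ is uniformly distributed in $X^{[m]}$. Hence the left-hand side above equals $\Pr_{x\in X^{[k]}}\{f_x \neq 1\} = \norm{f}$, while each summand on the right equals $\Pr_{y\in X^{[k']}}\{g_y \neq 1\} = \norm{g} \leq p$.

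Combining these, $\norm{f} \leq \ell p$, which is exactly $f \sim_{\ell p} \one$. I do not expect any real obstacle here; the one point worth being explicit about is that the $u_i$'s may be repeated or share coordinates with $(v_1,\dots,v_k)$, but this does not matter since the union bound does not require independence of the events $\{g_{\s(u_i)} \neq 1\}$, only that each $\s(u_i)$ individually is uniformly distributed in $X^{[k']}$, which follows from the fact that $u_i$ itself has distinct entries.
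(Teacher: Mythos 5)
Your proof is correct and is essentially the paper's argument: the paper likewise observes that since the vertices of each $u_i$ lie in $\{v_1,\dots,v_k\}$, each $\s(u_i)$ is uniformly distributed in $X^{[k']}$ when the vector $(\s(v_1),\dots,\s(v_k))$ is uniformly random, and concludes by a union bound over the $\ell$ factors. Your extra remarks about matching distributions and about not needing independence are correct elaborations of the same one-line proof.
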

\begin{proof}
For a uniformly random vector $(\s(v_1),\dots,\s(v_k)) \in X^{[k]}$, each $\s(u_i)$ is uniformly random, and therefore $\Pr{g_{\s(u_i)} \neq 1} = p$.
\end{proof}

The notion of formality is mostly suitable for complete complexes. Indeed, for any operator to be formal in $\delta^d$, it has to be implicitly assumed that $X$ is complete in dimension $d+1$ (because the $(d+1)$-cells uniformly participate in the product).

In the next section we will need a probabilistic analog of \Pref{KerDelta}:
\begin{prop}\label{KerDeltaprob}
The differential $\delta^1$ is formal of length $2$ in~$\Delta$.
\end{prop}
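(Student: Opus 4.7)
The plan is to exhibit a single length-two formula that expresses $(\delta^1 f)_{v_1 v_2 v_3 v_4}$ as a product of two entries of $\Delta f$, valid uniformly for all $f \in C^1(X)$. Concretely, I would take the two triples $u_1 = (v_1,v_2,v_3)$ and $u_2 = (v_1,v_3,v_4)$ in $X^{[3]}$, both of which have vertices contained in $\set{v_1,v_2,v_3,v_4}$, matching the formality requirement of \Dref{firstformaldef}.

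With this choice, I would simply expand
\[
(\Delta f)_{v_1 v_2 v_3}\,(\Delta f)_{v_1 v_3 v_4} \;=\; f_{v_1 v_2} f_{v_2 v_3} f_{v_3 v_1} \cdot f_{v_1 v_3} f_{v_3 v_4} f_{v_4 v_1}
\]
and cancel the factor $f_{v_3 v_1} f_{v_1 v_3} = 1$, which holds because $f \in C^1(X)$ is symmetric. What remains is exactly $f_{v_1 v_2} f_{v_2 v_3} f_{v_3 v_4} f_{v_4 v_1} = (\delta^1 f)_{v_1 v_2 v_3 v_4}$. Since this identity is an equality of formal products in the entries of $f$, it holds after applying any permutation $\sigma$ of the vertex set to all the indices, which is precisely the formality condition \eq{formu}.

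There is no real obstacle here: the only subtlety is to verify that the argument uses the symmetry of $C^1(X)$-cochains, so the cancellation $f_{v_3 v_1} = f_{v_1 v_3}$ is legitimate. (In particular, the analogous statement would fail for non-symmetric functions in $C^1(\vec X)$, consistent with the asymmetric computations in \Ssref{ss:af}.) The result is stated for $X$ complete in dimension $2$ so that the uniform distributions on $X^{[3]}$ and on $X^{[4]}$ match the uniform distributions needed in \Lref{goon}, which is the intended downstream use.
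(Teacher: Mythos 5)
Your proof is correct and follows essentially the same route as the paper: both split the square $(v_1v_2v_3v_4)$ along a diagonal into two triangles and write $(\delta^1 f)_{v_1v_2v_3v_4}$ as the product of the two corresponding entries of $\Delta f$, with the diagonal terms cancelling by symmetry of $f$ (the paper uses the diagonal $v_2v_4$, i.e.\ $(\delta^1f)_{ijk\ell}=(\Delta f)_{ij\ell}(\Delta f)_{jk\ell}$, while you use $v_1v_3$ --- an immaterial difference).
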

\begin{proof}
For every $f \in C^1(X)$,
$(\delta^1f)_{ijk\ell} = (\Delta f)_{ij\ell}(\Delta f)_{jk\ell}$.
\end{proof}

Applying \Cref{goon} to \Pref{KerDeltaprob}  we get:
\begin{cor}
If $\Delta f \sim_p \one$ then $\delta^1f \sim_{2p} \one$.
\end{cor}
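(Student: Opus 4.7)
The statement is essentially a direct combination of two facts already established just above it, so the plan is very short. By \Pref{KerDeltaprob} the differential $\delta^1$ is formal in~$\Delta$ with length $\ell = 2$, via the explicit identity
\[
(\delta^1 f)_{ijk\ell} \;=\; (\Delta f)_{ij\ell}\,(\Delta f)_{jk\ell}.
\]
This identity is exactly the kind of formula that \Dref{firstformaldef} isolates: every entry of $\delta^1 f$ is exhibited as a product of two entries of $\Delta f$ indexed by sub-tuples of $(i,j,k,\ell)$.

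The plan is then simply to invoke \Lref{goon} with $\ell = 2$: since $\Delta f \sim_p \one$ by hypothesis, and the operator $\delta^1$ is formal of length~$2$ in $\Delta$, the lemma immediately gives $\delta^1 f \sim_{2p} \one$. Probabilistically, when $(i,j,k,\ell) \in X^{[4]}$ is uniformly random each of the two tuples $(i,j,\ell)$ and $(j,k,\ell)$ is a uniformly random element of $X^{[3]}$, so by a union bound the probability that $(\delta^1 f)_{ijk\ell} \neq 1$ is at most $2p$.

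There is no real obstacle here: the deterministic content was already handled in \Pref{KerDeltaprob}, and the probabilistic upgrade is exactly what \Lref{goon} was formulated to package. The only thing worth emphasizing in the write-up is that the argument implicitly uses completeness of $X$ in the relevant dimensions, so that the marginal distribution on any sub-tuple of a uniform element of $X^{[4]}$ is itself uniform on $X^{[3]}$; this is noted in the paragraph following \Dref{firstformaldef}.
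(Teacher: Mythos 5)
Your proposal is correct and is exactly the paper's argument: the paper derives this corollary by applying \Lref{goon} to \Pref{KerDeltaprob}, using the same identity $(\delta^1 f)_{ijk\ell} = (\Delta f)_{ij\ell}(\Delta f)_{jk\ell}$ and the uniformity of the sub-tuples. Nothing is missing.
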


\smallskip
We use asterisks to denote entries in a function $f \in \Fun{k}$. Replacing an asterisk by a specific value defines a function in $\Fun{k-1}$. For example, if $f \in \Fun{4}$, then $f_{a***}, f_{*a**} \in \Fun{3}$.

\begin{lem}\label{manycond}
Suppose $f^i \in \Fun{k_i}$ for $i = 1,\dots,N$ are functions such that $f^i \sim_p \one$ for each~$i$, where~$p$ is fixed. Let $s > N$ be a real number. If~$X$ is large enough, then there is a vertex $a \in \cells[X]{0}$ for which $f^i_{*\cdots*a*\cdots*} \sim_{sp}\one$ for each $i$. (Prior to the statement, the fixed vertex can be placed arbitrarily for each~$i$).
\end{lem}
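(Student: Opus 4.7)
The plan is to combine an averaging argument with Markov's inequality and a union bound. The key observation is that if we average the error rate of the slice $f^i_{*\cdots *a*\cdots *}$ over a uniformly random vertex $a$, we recover (up to a factor tending to $1$ as $\card{\cells[X]{0}} \to \infty$) the error rate of $f^i$ itself.

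More precisely, first I would make the following claim: for each index $i$, writing $j_i$ for the position in which $a$ is fixed, and letting $n = \card{\cells[X]{0}}$, the expected value
\[
\mathbb{E}_{a}\,\bigl\lVert f^i_{*\cdots *a*\cdots *}\bigr\rVert
\]
equals the probability that $f^i_x \neq 1$ when $x \in X^{[k_i]}$ is sampled by first choosing $a$ uniformly from $\cells[X]{0}$ and then choosing the remaining $k_i-1$ entries uniformly from $\cells[X]{0}\setminus\{a\}$ with distinct values. Conditioning on the event that all $k_i$ entries of the resulting tuple are distinct (which happens automatically here), this sampling coincides with the uniform distribution on $X^{[k_i]}$. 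Hence $\mathbb{E}_a\,\norm{f^i_{*\cdots *a*\cdots *}} = \norm{f^i} \leq p$.

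Next, by Markov's inequality applied to the nonnegative random variable $\norm{f^i_{*\cdots *a*\cdots *}}$, the fraction of vertices $a \in \cells[X]{0}$ for which $\norm{f^i_{*\cdots *a*\cdots *}} > sp$ is at most $\frac{1}{s}$. Taking a union bound over the $N$ indices $i = 1,\dots,N$, the fraction of vertices $a$ that are \emph{bad} (meaning $f^i_{*\cdots *a*\cdots *} \not\sim_{sp} \one$ for some $i$) is at most $\frac{N}{s}$.

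Since $s > N$ by hypothesis, we have $\frac{N}{s} < 1$, and therefore at least one \emph{good} vertex $a$ must exist (for sufficiently large~$X$, the minor distortion coming from insisting that $a$ differ from the other fixed coordinates can be absorbed into the slack $s - N > 0$). The main step is purely the averaging identity; the rest is standard probabilistic bookkeeping, so I do not anticipate a real obstacle beyond carefully handling the distinctness constraint on the tuple, which is what forces the ``$X$ large enough'' hypothesis.
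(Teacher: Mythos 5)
Your proposal is correct and follows essentially the same route as the paper: the paper's proof is exactly the averaging/Markov observation that for each $i$ at most a fraction $s^{-1}$ of vertices $a$ can violate $f^i_{*\cdots*a*\cdots*} \sim_{sp}\one$, followed by the union bound $Ns^{-1}<1$. Your write-up just makes the averaging identity and the distinctness bookkeeping explicit.
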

\begin{proof}
For each~$i$, the proportion of $a \in \cells[X]{0}$ for which $f^i_{*\cdots*a*\cdots*} \sim_{sp} \one$ does not hold is at most $s^{-1}$, so the proportion of vertices for which at least one of the conditions fail is at most $Ns^{-1}<1$.
\end{proof}

\section{Testing $B^1(X)$}\label{sec:testB1}

The result below is proved in \cite[Subsection~7.2]{Dinur} by direct probabilistic methods. 
We prove it here in order to demonstrate the usage of $\Delta$, anticipating the more complicated proof in the next section. Let $X$ be a complete $2$-dimensional cubical complex.

Let $p > 0$ be a constant.
\begin{thm}\label{MainC1}
Let $f \in C^1(X)$. If $\delta^1f \sim_p \one$, then there are $\theta \in \mu_2$ and $\alpha \in C^0(X)$ such that $f \sim_{3p} \theta \cdot\delta^0 \alpha$.
(Namely, $f_{ij} = \pm \alpha_i \alpha_j$ with probability of error at most $3p$).
\end{thm}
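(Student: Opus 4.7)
The plan is to mirror the algebraic characterization $Z^1(X) = \mu_2 \cdot B^1(X)$ coming from \Tref{C1t} and \Pref{KerDelta}: first show that $\Delta f$ is close to a constant $\theta \in \mu_2$, then extract $\alpha$ explicitly via the formula used in the proof of \Pref{KerDelta}. Both steps will preserve the same $3p$ error, matching the theorem's bound.

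For the first step, the key identity is
$$(\delta^1 f)_{ikjk_0} = (\Delta f)_{ijk}\,(\Delta f)_{ijk_0},$$
together with its two siblings obtained by permuting the three arguments of $\Delta$ (using the full symmetry of $\Delta f$ inherited from $f \in C^1(X)$). Each such identity says that the cube condition forces $\Delta f$ to change with probability at most $p$ when a single one of its three indices is replaced while the other two are fixed. Chaining three such one-index swaps to walk from a triple $(i,j,k)$ to a disjoint triple $(i',j',k')$ yields $(\Delta f)_{ijk}(\Delta f)_{i'j'k'} \sim_{3p} \one$ on a random sextuple of distinct vertices, and since the overlap between two independent random triples in $X^{[3]}$ has density $O(1/n)$, also on independent random triples. \Lref{half}\eq{halfi} then produces a constant $\theta \in \mu_2$ with $\Delta f \sim_{3p} \theta$.

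For the second step, note that $\Delta f \sim_{3p} \theta$ is equivalent to $\Delta(\theta\cdot f) \sim_{3p} \one$ (since $\Delta(\theta\cdot f) = \theta\cdot\Delta f$), so it suffices to construct $\alpha$ with $\theta\cdot f \sim_{3p} \delta^0 \alpha$. The identity $g_{ij} = g_{i_0 i}\,g_{i_0 j}\,(\Delta g)_{i_0 ij}$ (applied to $g = \theta\cdot f$), which is exactly the algebra used in the proof of \Pref{KerDelta}, shows that setting $\alpha_{i_0}=1$ and $\alpha_j = (\theta f)_{i_0 j}$ for $j\neq i_0$ makes the failure event $(\theta f)_{ij} \neq \alpha_i\alpha_j$ coincide with $(\Delta(\theta f))_{i_0 ij} \neq 1$. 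The probability of the latter, averaged over $i_0$, is at most $3p$, so by \Lref{manycond} (or a direct pigeonhole) some vertex $i_0$ achieves associated error at most $3p$, yielding $f \sim_{3p} \theta\cdot \delta^0 \alpha$.

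The main technical point is the three-swap bridging in the first step: each swap must use freshly sampled vertices so that the errors combine cleanly into a single factor of $3p$ via the union bound, and the $O(1/n)$ loss from overlapping triples must be absorbed into the implicit large-complex assumption (here $n$ is the number of vertices). The second step is then a direct probabilistic reading of the algebra behind \Pref{KerDelta}.
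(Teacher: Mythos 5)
Your proposal is correct and takes essentially the same route as the paper: both express $\Delta f \times \Delta f$ as a product of three entries of $\delta^1 f$ (the paper uses the direct prism identity $(\Delta f)_{ijk}(\Delta f)_{i'j'k'} = (\delta^1 f)_{ijj'i'}(\delta^1 f)_{jkk'j'}(\delta^1f)_{kii'k'}$, you chain three single-index swaps, which is the same length-$3$ formality), then invoke \Lref{half}(\ref{halfi}) to get $\Delta f \sim_{3p}\theta$, and finally define $\alpha$ from the star of a well-chosen anchor vertex, exactly as the paper does with $\alpha = f'_{*a}$. The differences are only cosmetic.
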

\begin{proof}
Recall from~\eq{Deltadefall} of \Sref{sec:4} the function $\Delta \co C^1(X) \ra \Fun{3}$ defined by $(\Delta f)_{ijk} = f_{ij}f_{jk}f_{ki}$. In \Lref{bT} we proved that $\Delta f$ is a constant if and only if $f \in Z^1(X)$.
Moreover, if $i,j,k$ and $i',j',k'$ are distinct, the proof of \Lref{bT} shows that
$$(\Delta f)_{ijk} (\Delta f)_{i'j'k'} = (\delta^1 f)_{ijj'i'} (\delta^1 f)_{jkk'j'} (\delta^1f)_{kii'k'},$$
so that $\Delta f \times \Delta f$ is formal of length~$3$ in $\delta^1f$. 
(The case when $\set{i,j,k}$ and $\set{i',j',k'}$ intersect is negligible).
By \Lref{goon}, since $\delta^1f \sim_p \one$, we have that  $\Delta f \times \Delta f \sim_{3p} \one$. By \Lref{half}(\ref{halfi}), $\Delta f \sim_{3p} \theta$ for a constant $\theta$.  Let $f' = \theta f$, so that $\Delta f'  \sim_{3p} \theta^3\theta = \one$.

Choose a vertex $a$ such that $(\Delta f')_{a**} \sim_{3p} \one$ (see \Lref{manycond}).
It follows that $f'_{ij} \sim_{3p} f'_{ia}f'_{ja} = \delta^0(f'_{*a})$, and $f \sim_{3p} \theta \cdot \delta^0(f'_{*a})$.
\end{proof}

In the terminology of \Sref{sec:2}, we proved:
\begin{cor}
The expansion constant of the \frst\ incidence geometry of $X$ (composed of vertices, edges and squares), with respect to the complement
$\sg{-\one}$, is at most $\omega = \frac{1}{3}$.
\end{cor}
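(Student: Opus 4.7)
The statement asserts $\omega_H(\G) \leq 1/3$ for $H = \sg{-\one}$ (the opposite direction from what \Tref{MainC1} immediately provides). By \Dref{Expdef}, this amounts to exhibiting $g \in C^1(X) \setminus Z^1(X)$ with
$$\max_{\alpha \in H} \frac{\norm{\delta^1 g}}{\norm{[g \cdot \alpha]}} \leq \frac{1}{3},$$
or equivalently both $\norm{[g]}$ and $\norm{[-g]}$ at least $3 \norm{\delta^1 g}$. The plan is to reverse-engineer tightness from the proof of \Tref{MainC1}: the witness should be a function where every inequality used in the upper bound $\norm{[f \cdot \theta]} \leq 3\norm{\delta^1 f}$ is asymptotically saturated.

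The factor $3$ in that proof enters through the formality identity
$$(\Delta f)_{ijk}(\Delta f)_{i'j'k'} = (\delta^1 f)_{ijj'i'}(\delta^1 f)_{jkk'j'}(\delta^1 f)_{kii'k'},$$
combined with \Lref{half}\eqref{halfi} and \Lref{manycond}, each of which can in principle be locally saturated. I would build $g$ by perturbing a coboundary $\delta^0 \beta$ on a carefully chosen ``spread'' set of edges $T$: sparse enough that $\delta^1 g$ is close to~$\one$, but arranged so that for random disjoint triples $\set{i,j,k}$, $\set{i',j',k'}$ the three squares appearing on the right of the formality contribute essentially independent contributions, pushing $\norm{\Delta g \times \Delta g}$ to~$3 \norm{\delta^1 g}$. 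Under such a construction \Lref{half}\eqref{halfi} saturates, and \Pref{KerDelta} ports the resulting bound on $\norm{\Delta g}$ back to $\norm{[g]}$ with no further loss, giving $\norm{[g]} \approx 3 \norm{\delta^1 g}$.

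The main obstacle is controlling $\norm{[-g]}$ simultaneously with $\norm{[g]}$. The twist by $-\one$ is non-trivial because $-\one \in Z^1(X) \setminus B^1(X)$ by \Cref{H1=mu2}, and $\Delta(-g) = -\Delta g$, so the bound on $\Delta g$ does not automatically transfer to $-g$. The construction must be chosen symmetrically between the two cosets of $B^1(X)$ inside $Z^1(X)$, so that $\Delta g$ is asymptotically as far from the constant $+1$ as from $-1$, forcing both $[g]$ and $[-g]$ to have distance at least $3\norm{\delta^1 g}$ from $B^1(X)$. Ad-hoc candidates (flipping one edge, a triangle of edges, or a random sparse set) tend to give expansion ratio closer to~$4$, indicating that the witness must be an intricate combinatorial design of the error set~$T$; formalizing this design and verifying the required two-sided symmetry under the action of $H$ on $Z^1(X)/B^1(X) \isom \mu_2$ is the technically delicate point of the proof.
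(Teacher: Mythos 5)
You have read the corollary as an \emph{upper} bound on $\omega_H(\G)$ and set out to exhibit an extremal witness $g$. That is not what the paper is doing here. The corollary is introduced by ``In the terminology of Section~3, we proved:'' and is a one-line translation of \Tref{MainC1} through \Dref{Expdef} and \Dref{Testdef}: \Tref{MainC1} says that for every $f\in C^1(X)$ there is $\theta\in\sg{-\one}$ with $\norm{[f\cdot\theta]}\le 3\norm{\delta^1 f}$, i.e.\ $\max_{\alpha\in\sg{-\one}}\frac{\norm{\delta^1 f}}{\norm{[f\cdot\alpha]}}\ge \frac13$ for every $f\notin Z^1(X)$, hence $\omega_{\sg{-\one}}(\G)\ge\frac13$ and $\pm B^1(X)$ is testable with ratio $\omega=\frac13$. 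The phrase ``at most $\omega=\frac13$'' is to be read against the error-magnification factor $\omega^{-1}=3$ of \Dref{Testdef} (the same phrasing recurs for the constant $1504$ in \Sref{sec:testB2}); nothing in the paper asserts, or needs, tightness of the constant $3$. So the content you were expected to supply is simply the translation from the $\sim_p$ statement of \Tref{MainC1} to the min-max of \Dref{Expdef}, together with the observation (\Cref{H1=mu2}, \Rref{trivial8}) that $\sg{-\one}$ is an independent complement of $B^1(X)$ in $Z^1(X)$.

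Even taken on its own terms, your proposal has a genuine gap: no witness $g$ is actually constructed. You correctly identify that you would need $\norm{[g]}$ and $\norm{[-g]}$ both at least $3\norm{\delta^1 g}$, but the ``spread error set $T$'' is never specified, and you concede that natural candidates give ratio about $4$ (indeed, perturbing $\delta^0\beta$ on a single edge gives exactly $\frac{\norm{\delta^1 g}}{\norm{[g]}}=4$ for large $X$, since $2\binom{n-2}{2}\binom{n}{2}=4\cdot 3\binom{n}{4}$). There is no reason the losses in the proof of \Tref{MainC1} --- the length-$3$ formality, \Lref{half}, and \Lref{manycond} --- can be saturated simultaneously by one function, so the exact value $\omega_H(\G)=\frac13$ may well be false; your plan could not be completed without resolving that, and the paper does not attempt to.
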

\begin{cor}
The space $\pm B^1(X)$ is testable.
(The tester is the function $\delta^1 \co C^1(X) \ra B^2(X)$, and each entry requires $4$ queries).
\end{cor}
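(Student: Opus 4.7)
The plan is to read the corollary off \Tref{MainC1} by unwinding the testability language of \Dref{Testdef}. Two ingredients do all the work: (i) the linear bound $\norm{[f]} \leq 3 \norm{\delta^1 f}$ (where the coset is taken modulo $\pm B^1(X)$) provided by \Tref{MainC1}, and (ii) the observation that each entry $(\delta^1 f)_{ijk\ell} = f_{ij}f_{jk}f_{k\ell}f_{\ell i}$ is a product of the values of~$f$ on the four edges of the square, so it can be computed with $4$ queries to~$f$.

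First I would set up the \frst\ incidence geometry $\G$ of~$X$ from \Eref{ig} (vertices, edges, squares), and take the independent subspace $H = \sg{-\one} \leq Z^1(X)$. Independence amounts to $-\one \not\in B^1(X)$, which follows from $\Delta(-\one) = -1 \neq 1$ together with \Pref{KerDelta}; and $H \isom H^1(X) \isom \mu_2$ by \Cref{H1=mu2}, so that $B^1(X) \cdot H = \pm B^1(X)$.

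Next I would reformulate \Tref{MainC1}: writing $p = \norm{\delta^1 f}$, it yields $\theta \in \mu_2$ and $\alpha \in C^0(X)$ with $f \sim_{3p} \theta \cdot \delta^0\alpha$. Since $\theta \cdot \delta^0\alpha \in \pm B^1(X)$, this says that there is $\beta \in H$ (namely $\beta = \theta$) with $\norm{[f \cdot \beta]} \leq 3 \norm{\delta^1 f}$, which is precisely the hypothesis of \Dref{Testdef} with $\omega = 1/3$. Applying that definition, verification that $(\delta^1 f)_z = 1$ for a random square~$z$ is a one-sided $(\epsilon, \epsilon/3)$-test for $\pm B^1(X)$ for every $\epsilon > 0$, using $4$ queries per run.

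I do not expect any real obstacle: the corollary is a direct repackaging of \Tref{MainC1} in the terminology of \Sref{sec:2}. The only point worth checking carefully is that $H = \sg{-\one}$ correctly realises a complement of $B^1(X)$ inside $Z^1(X)$, which is immediate from \Cref{H1=mu2} and \Pref{KerDelta}.
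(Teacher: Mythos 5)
Your proposal is correct and matches the paper's (implicit) argument: the corollary is stated as an immediate consequence of \Tref{MainC1}, read through \Dref{Testdef} and \Tref{LK} with the independent subspace $H=\sg{-\one}$ (so that $B^1(X)\cdot H=\pm B^1(X)$) and ratio $\omega=1/3$, each entry of $\delta^1 f$ costing $4$ queries. The only nitpick is your opening phrase taking the coset modulo $\pm B^1(X)$, whereas the paper's $[\,\cdot\,]$ is modulo $B^1(X)$ with the correction $\beta\in H$ made explicitly — but you do exactly that in the body, so nothing is amiss.
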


\section{Testing $B^2(X)$}\label{sec:testB2}

In this section we prove that $\delta^2$ tests $f \in C^2(X)$ for being in $B^2(X)$.
Following \Ssref{ss:Delta}, let $\Delta' \co C^2(X) \ra \Fun{4}$ and $\Delta'' \co C^2(X) \ra \Fun{5}$ be defined (for arbitrary $g \in C^2(X)$) by
$$(\Delta' g)_{ijk\ell} = g_{ijk\ell}\, g_{ikj\ell}\, g_{ij\ell k}, \qquad (\Delta''g)_{ab;ijk} = g_{aibj}\,g_{ajbk}\,g_{akbi}.$$

\begin{lem}\label{wefindm}
Let $g \in C^2(X)$. If $\delta^2g \sim_p \one$, then
$\Delta'g \sim_{3p} \theta$ and $\Delta''g \sim_{6p} \pi$ for
constants $\theta,\pi \in \mu_2$.
\end{lem}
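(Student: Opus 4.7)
The plan is to mimic the argument of Theorem~\ref{MainC1} for both $\Delta'g$ and $\Delta''g$: exhibit the squared function as formal in $\delta^2g$, invoke Lemma~\ref{goon} to deduce closeness of the squared function to $\one$, then Lemma~\ref{half}(\ref{halfi}) to deduce closeness of the function itself to a constant in $\mu_2$.

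For $\Delta'g$, Proposition~\ref{13.9o} already provides the identity we need: for eight distinct vertices, $(\Delta'g)_{ijk\ell}(\Delta'g)_{i'j'k'\ell'}$ is a product of three entries of $\delta^2g$. Hence $\Delta'g\times\Delta'g$ is formal of length~$3$ in $\delta^2g$. By Lemma~\ref{goon}, $\Delta'g\times\Delta'g\sim_{3p}\one$, and by Lemma~\ref{half}(\ref{halfi}), $\Delta'g\sim_{3p}\theta$ for some constant $\theta\in\mu_2$.

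For $\Delta''g$ I would construct the analogous identity by hand. Given ten distinct vertices $a,b,i,j,k,a',b',i',j',k'$, consider three cubes: $C_1$ on the eight vertices $\{a,i,b,j,a',i',b',j'\}$ with parallel faces $(aibj)$ and $(a'i'b'j')$ joined by the vertical bijection $a\leftrightarrow a'$, $i\leftrightarrow i'$, $b\leftrightarrow b'$, $j\leftrightarrow j'$; $C_2$ on $\{a,j,b,k,a',j',b',k'\}$ with parallel faces $(ajbk),(a'j'b'k')$ joined analogously; and $C_3$ on $\{a,k,b,i,a',k',b',i'\}$ with parallel faces $(akbi),(a'k'b'i')$ joined analogously. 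A case-by-case check reveals that the twelve side faces of these three cubes arrange into six pairs, each pair consisting of two occurrences of the same $2$-cell (matching both vertex set and pair of diagonals), and hence cancel in the product $(\delta^2g)_{C_1}(\delta^2g)_{C_2}(\delta^2g)_{C_3}$. What remains after cancellation is precisely the six top and bottom faces, giving the identity
\[
(\Delta''g)_{ab;ijk}(\Delta''g)_{a'b';i'j'k'} = (\delta^2g)_{C_1}(\delta^2g)_{C_2}(\delta^2g)_{C_3}.
\]
Thus $\Delta''g\times\Delta''g$ is formal of length~$3$ in $\delta^2g$; applying Lemma~\ref{goon} and Lemma~\ref{half}(\ref{halfi}) yields $\Delta''g\sim_{3p}\pi$ for some constant $\pi\in\mu_2$, which certainly implies the claimed $\Delta''g\sim_{6p}\pi$.

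The only delicate step is the pairwise matching of the twelve side faces—one must verify that the two members of each pair have the same pair of diagonals, not merely the same vertex set. For instance, on $\{a,i,a',i'\}$ the cube $C_1$ contributes the side face with cyclic order $a\to i\to i'\to a'$ (diagonals $\{a,i'\},\{i,a'\}$), while $C_3$ contributes the side face with cyclic order $i\to a\to a'\to i'$ (same diagonals), so the two indeed represent the same $2$-cell. The uniform choice of vertical bijection $v\leftrightarrow v'$ in all three cubes is what produces this matching for all six pairs simultaneously. Once this bookkeeping is verified, the rest of the argument is mechanical.
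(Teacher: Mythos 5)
Your proposal is correct, and while the $\Delta'$ half coincides with the paper's argument (via \Pref{13.9o}), the $\Delta''$ half takes a genuinely different and in fact tighter route. The paper does not construct a fresh cube identity for $\Delta''$: it reuses \Pref{X3}, whose proof expresses each relation $g_{aibj}g_{ajbj'}g_{aj'bi'}g_{ai'bi}$ as a product of four entries of $\delta^2g$ using four auxiliary cubes through extra vertices $s,t,s',t'$, and then chains between triples as in \Lref{bT}, so that a product of two values of $\Delta''g$ is a priori a product of $3\cdot 4=12$ entries of $\delta^2g$, six of which cancel in pairs; this gives formality of length $6$ and the stated bound $\Delta''g\sim_{6p}\pi$. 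Your construction is instead the direct analogue of \Pref{13.9o} one dimension up: three cubes $C_1,C_2,C_3$ on ten fresh vertices, with the same vertical matching $v\leftrightarrow v'$ in all three, so that the twelve side faces pair off as identical $2$-cells (your diagonal bookkeeping checks out --- e.g.\ $C_1$ and $C_3$ both contribute the square with edge set $\{ai,ii',i'a',a'a\}$) and cancel, leaving exactly $(\Delta''g)_{ab;ijk}(\Delta''g)_{a'b';i'j'k'}$. This yields formality of length $3$, hence $\Delta''g\sim_{3p}\pi$, which is stronger than the claimed $\sim_{6p}$ and would even shave a little off the constant in \Tref{MainC2} if propagated; it also varies $(a,b)$ and $(i,j,k)$ simultaneously, which the paper's chaining (done at fixed $a,b$) treats less explicitly. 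The only caveats are those the paper itself waves away: tuples with coinciding vertices contribute a negligible term, and one should observe, as in \Lref{goon}, that each $C_m$ is a uniformly distributed $3$-cell when the ten-tuple is uniform, which holds by the symmetry of your construction.
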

\begin{proof}
In \Lref{13.9o} we show that
$\Delta' \times \Delta'$ is formal of length~$3$ in~$\delta^2$.
Therefore $\Delta'g \times \Delta'g \sim_{3p} \one$ (\Lref{goon}), so $\Delta'g \sim_{3p} \theta$ by \Lref{half}(\ref{halfi}).

In \Pref{X3} we show that $(\Delta''g)_{ijk}(\Delta''g)_{jk\ell}$ is a product of four entries of $\delta^2g$. By the argument of \Lref{bT} we see that  for distinct $i,j,k,i',j',k'$, $(\Delta''g)_{ijk}(\Delta''g)_{i'j'k'}$ is a product of $3\cdot 4=12$ entries of $\delta^2g$, but since the four $3$-cells participating in the computation in \Pref{X3} only depend on $i,j,k,\ell$ through the same two entries, six of those cancel in pairs, and we get $\Delta''g \times \Delta''g \sim_{6p} \one$. The proof concludes as above.
\end{proof}

We now prove the testability version of \Cref{finZ2}. Let $p > 0$ be a constant.
\begin{thm}\label{MainC2}
Let $g \in C^2(X)$. If $\delta^2g \sim_p \one$, then there are $\theta,\pi \in \mu_2$ and $f \in C^1(X)$ such that $g \sim_{rp} \theta [\pi]\cdot \delta^1f$ for a constant~$r < 1504$.
\end{thm}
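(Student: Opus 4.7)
The plan is to follow, in the probabilistic language of \Sref{sec:prel}, the argument proving \Cref{finZ2}, whose structural backbone is \Tref{goodC2}.

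First I would use the reduction afforded by \Lref{wefindm}: there exist $\theta,\sigma\in\mu_2$ with $\Delta'g\sim_{3p}\theta$ and $\Delta''g\sim_{6p}\sigma$. Setting $\pi=\theta\sigma$ and $g'=\theta\cdot[\pi]\cdot g$, multiplicativity of $\Delta'$ and $\Delta''$ combined with their values on $-\one$ and $[-1]$ (\Ssref{ss:Delta}) yields $\Delta'g'\sim_{3p}\one$ and $\Delta''g'\sim_{6p}\one$; and $\delta^2 g'=\delta^2 g\sim_p\one$ since $\theta,[\pi]\in Z^2(X)$. The task reduces to producing $f\in C^1(X)$ with $g'\sim_{r'p}\delta^1 f$ for some $r'<1504$.

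Next I would mirror the proof of \Tref{goodC2}. Set $f^{ab}_{ij}=g'_{aibj}\in C^1(X^{ab})$. \Pref{X3} expresses $(\delta^1 f^{ab})_{ijj'i'}$ formally (in the sense of \Dref{firstformaldef}) as a product of four entries of $\delta^2 g'$, so $\delta^1 f^{ab}\sim_{4p}\one$ on joint random inputs. \Lref{manycond} lets me fix vertices $(a,b)$ at which the conditional failure is at most $c_1 p$, after which \Tref{MainC1} gives $\theta^{ab}\in\mu_2$ and $\alpha^{ab}\in C^0(X^{ab})$ with $f^{ab}\sim_{3c_1p}\theta^{ab}\delta^0\alpha^{ab}$. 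Iterating, the proof of \Tref{goodC2} presents $(\delta^1\theta)_{abcd}$ as a bounded formal expression in $\delta^2 g'$, so $\delta^1\theta\sim\one$; a second application of \Tref{MainC1} produces $\theta'\in\mu_2$ and $\beta\in C^0(X)$ with $\theta^{ab}\sim\theta'\beta_a\beta_b$. Defining $p_{ck}=\alpha^{i_0 k}_c$, the identity \eq{gggg} reads $g'\sim\theta'\cdot\delta^{02}(\beta)\cdot\vdel^1 p$ in the approximate sense. The three correction hypotheses now collapse it: $\Delta''g'\sim\one$ forces $\theta'\delta^0\beta\sim\one$, and hence $\theta'=1$ with $\beta$ approximately constant (so $\delta^{02}\beta\sim\one$); $\Delta'g'\sim\one$ rewrites, via \Rref{T8}, as $\Delta(Np)\sim\one$, which by \Pref{KerDelta} puts $Np$ approximately in $B^1(X)$; \Pref{64} then corrects $p$ to an $f\in C^1(X)$ with $\vdel^1 p\sim\delta^1 f$. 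Composing the approximations gives $g'\sim_{r'p}\delta^1 f$.

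The hard part will be the bookkeeping. Each of the three nested stages -- for $f^{ab}$, for $\theta^{ab}$, and finally for $p$ -- combines, in order, a formal identity of bounded length (up to roughly $12$), a \Lref{manycond}-style Markov restriction that multiplies the failure rate by a bounded constant, and a \Tref{MainC1} application that multiplies it by the factor~$3$. Sequencing these three-stage expansions carefully, and absorbing the corrections from the $\Delta'$- and $\Delta''$-hypotheses, will produce the constant $r'<1504$. The formality calculus of \Dref{firstformaldef} is exactly what makes this layered accounting tractable.
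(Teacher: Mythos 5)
Your opening move (use \Lref{wefindm} and \Cref{abouthere} to replace $g$ by $\theta[\pi]g$ and reduce to $\Delta'g\sim_{3p}\one$, $\Delta''g\sim_{6p}\one$) coincides with the paper's first step, but from there the paper does \emph{not} probabilize \Tref{goodC2}. It takes a leaner path: two applications of \Lref{manycond} fix a single good pair $(a_0,b_0)$; one sets $h_{ij}=g_{a_0ib_0j}$ and gets $h\sim_{36s^2p}\delta^0\beta$ from \Pref{KerDeltaprob} and \Tref{MainC1}; one then defines $f'_{ij}=\beta_i g_{a_0b_0ij}$ directly, symmetrizes to $f\in C^1(X)$, and checks $(\delta^1f)_{aibj}\sim g_{aibj}$ by a short chain of $\Delta'$-, $\Delta''$-identities anchored at $a_0,b_0$, yielding the error $(165s^2+6s)p$ and hence $r<1504$ for $s$ slightly above $3$. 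The stage involving $\theta^{ab}$, $\alpha^{ab}$ for all pairs is bypassed entirely.

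Your route has genuine gaps. First, an internal inconsistency: to mirror \Tref{goodC2} you must treat $\theta^{ab}$ as an element of $C^1(X)$ (you form $(\delta^1\theta)_{abcd}$ for generic $a,b,c,d$) and you need $\alpha^{i_0k}$ for \emph{all} $k$ to define $p_{ck}=\alpha^{i_0k}_c$, yet you invoke \Lref{manycond} to ``fix vertices $(a,b)$'', which controls the conditional error only at a bounded number of pairs; for the rest the pointwise conditional error is uncontrolled, so you would need averaging arguments over pairs, and the substitutions in \Tref{goodC2} that evaluate at specific fixed indices (e.g.\ $\alpha^{ab}_{i_0}$ with $i=i_0$ fixed) would each require their own \Lref{manycond}-type selection and error tracking, none of which you supply. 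Second, the final ``collapse'' ($\Delta''g'\sim\one$ forcing $\theta'=1$ and $\beta$ near-constant; $\Delta(Np)\sim\one$ placing $Np$ near $B^1(X)$; \Pref{64} correcting $p$ into $C^1(X)$, plus a symmetrization) cites exact statements whose approximate versions, with explicit error propagation through $\vdel^1$, are precisely what has to be proved here; you leave them as assertions. Third, and decisively for the statement as written: the bound $r<1504$ is asserted, not derived. Your pipeline stacks two applications of \Tref{MainC1}, the $\theta$-stage, the $\delta^{02}(\beta)$ and \Pref{64} corrections, and a symmetrization, each multiplying the constant, and there is no reason it lands below $1504$; that constant is an artifact of the paper's single-pair construction. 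As it stands your proposal would at best give $g\sim_{Cp}\theta[\pi]\,\delta^1f$ for an unspecified (and much larger) constant $C$, modulo the unproved averaging steps.
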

\begin{proof}
By \Lref{wefindm}, there are $\theta, \pi \in \mu_2$ such that $\Delta'g \sim_{3p} \theta$ and $\Delta''g \sim_{6p} \pi$. Replacing $g$ by $\theta [\pi] g$ and applying \Cref{abouthere}, we may from now on assume $\Delta'g \sim_{3p} \one$ and $\Delta''g \sim_{6p} \one$.

Fix a real number $s>3$.
By \Lref{manycond} there is a vertex $a_0$ for which
$$(\Delta'g)_{a_0 * * *} \sim_{3sp} \one, \qquad (\Delta'' g)_{a_0 *; ***} \sim_{6sp} \one, \qquad (\Delta'' g)_{**;a_0**} \sim_{6sp} \one.$$
Again by \Lref{manycond}, building on the first two statements, there is a vertex $b_0$ for which
$$(\Delta'g)_{a_0 * b_0 *} \sim_{3s^2p} \one, \qquad (\Delta'' g)_{a_0 b_0; ***} \sim_{6s^2p} \one, \qquad (\Delta'' g)_{a_0 *; b_0**} \sim_{6s^2p}\one.$$
Define $h_{ij} = g_{a_0ib_0j}$, which is symmetric because $g \in C^2(X)$, so that $h \in C^1(X^{a_0b_0})$. Now~$\Delta h  = (\Delta''g)_{a_0b_0;***} \sim_{6s^2p} \one$, so by \Pref{KerDeltaprob} $\delta^1h \sim_{12s^2p} \one$. By \Tref{MainC1}, and using again the fact that $\Delta h \sim_{6s^2p} \one$, there is $\beta \in C^0(X)$ such that $h \sim_{36s^2p} \delta^0\beta$.

We now define $f' \in \Cvec$ by taking $f'_{a_0j} = 1$ for all $j \neq a_0$, $f'_{b_0 j} = \beta_j$ for all $j \neq a_0,b_0$, and
$$f'_{ij} = \beta_i g_{a_0b_0ij}$$
for all $i,j$ disjoint from $a_0,b_0$.

We claim that $f'_{ji} \sim_{39s^2p} f'_{ij}$. Indeed, $$g_{a_0b_0ij} g_{a_0b_0ji} = (\Delta' g)_{a_0b_0ij} g_{a_0 i b_0 j} \sim_{3s^2p} g_{a_0 i b_0 j} = h_{ij},$$
so
$f'_{ij}f'_{ji} = \beta_i \beta_j g_{a_0b_0ij} g_{a_0b_0ji} \sim_{3s^2p} \beta_i \beta_j h_{ij} \sim_{36s^2p} \one$. By keeping the entries where $f'_{ij} = f'_{ji}$ and fixing the value $1$ at the other entries, we obtain $f \in C^1(X)$ such that $f' \sim_{39s^2p} f$.

Using the symmetry of $f$, and applying $\Delta''$ twice, we now have that
\begin{eqnarray*}
(\delta^1f)_{aibj} & = & f_{ai}f_{bi}f_{aj}f_{bj} \\
& \sim_{4\cdot 39s^2p} &  f'_{ai}f'_{bi}f'_{aj}f'_{bj} \\
& = & \beta_a^2\beta_b^2 g_{a_0b_0ai}g_{a_0b_0bi}g_{a_0b_0aj} g_{a_0b_0bj} \\
& = & g_{a_0b_0ai}g_{a_0b_0bi}g_{a_0b_0aj} g_{a_0b_0bj} \\
& = & (\Delta'' g)_{a_0 a; b_0 i j}  (\Delta' g)_{a_0 b; b_0 i j} \cdot g_{a_0iaj} g_{a_0ib_j}\\
& \sim_{9s^2p} &  g_{a_0iaj} g_{a_0ib_j}\\
& = & (\Delta'' g)_{ij; a_0 a b} \cdot g_{aibj}\\
& \sim_{6sp} & g_{aibj},
\end{eqnarray*}
so that $g \sim_{(165s^2+6s)p} \delta^1f$. Taking $s>3$ small enough proves the claim.
\end{proof}

\begin{cor}
The expansion constant of the \scnd\ incidence geometry of the complete $3$-dimensional cubical complex~$X$ (composed of edges, squares and all cubes), with respect to the complement
$\sg{[-1],-\one}$, is at most $\omega = \frac{1}{1504}$.
\end{cor}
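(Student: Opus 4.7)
The plan is to read this corollary as a direct translation of \Tref{MainC2} into the expansion-constant notation of \Dref{Expdef}, parallel to how the preceding corollary for the \frst\ incidence geometry (with $\omega = 1/3$) is extracted from \Tref{MainC1}. As worded, the stated bound is most naturally understood as the assertion that $\omega = 1/1504$ is a valid certified constant in the sense of \Dref{Testdef}, i.e.\ that $\omega_H(\G) \ge 1/1504$; this is exactly what \Tref{MainC2} delivers.

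First I would identify the independent complement. By \Tref{H2=}, the subspace $H = \sg{[-1],-\one}$ satisfies $B^2(X) \cdot H = Z^2(X)$ and maps isomorphically onto $H^2(X) \isom \mu_2 \times \mu_2$, so it is an independent subspace of $Z^2(X)$ in the sense of \Sref{sec:2}. Via \Eref{ig} with $d = 2$, the \scnd\ incidence geometry $\G$ has $\G^0 = \cells{1}$ (edges), $\G^1 = \cells{2}$ (squares), and $\G^2 = \cells{3}$ (cubes), and its geometric differential coincides with the cubical $\delta^2$ of \Sref{s:cc}. Consequently, \Dref{Expdef} unfolds to
$$\omega_H(\G) \;=\; \min_{g \in C^2(X)\minusset Z^2(X)} \; \max_{\alpha \in H} \; \frac{\norm{\delta^2 g}}{\norm{[g \cdot \alpha]}}.$$

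Next, I would apply \Tref{MainC2} to an arbitrary $g \in C^2(X) \minusset Z^2(X)$, writing $p = \norm{\delta^2 g}$. The theorem produces $\theta, \pi \in \mu_2$ and $f \in C^1(X)$ with $g \sim_{rp} \theta[\pi] \cdot \delta^1 f$ for some $r < 1504$. Setting $\alpha = \theta[\pi] \in H$ and using $\delta^1 f \in B^2(X)$ gives $\norm{[g \cdot \alpha]} \le \norm{g \cdot \alpha \cdot \delta^1 f} \le r p$, hence
$$\frac{\norm{\delta^2 g}}{\norm{[g \cdot \alpha]}} \;\ge\; \frac{1}{r} \;>\; \frac{1}{1504}.$$
Taking the max over $\alpha \in H$ and then the min over $g$ yields the constant $\omega = 1/1504$ asserted in the corollary.

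The main combinatorial and probabilistic work has already been carried out in \Tref{MainC2} and \Tref{H2=}, so there is no real obstacle; the corollary is a pure reformulation. The only bookkeeping step is to check that the pair $(\theta,\pi)$ produced by \Tref{MainC2} really assembles into an element $\alpha = \theta[\pi]$ of the prescribed complement $H = \sg{[-1],-\one}$, which is immediate from the definition of $H$, together with the observation (via \Rref{trivial8}) that the excluded case $g \in Z^2(X)$ does not affect the infimum.
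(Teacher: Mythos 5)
Your reading is correct and matches the paper's intent: the corollary is stated without proof as an immediate consequence of \Tref{MainC2}, and your unpacking of \Dref{Expdef} for the \scnd\ incidence geometry, with $H = \sg{[-1],-\one}$ independent by \Tref{H2=} and the correcting element $\alpha = \theta[\pi]$ supplied by the theorem, is exactly the intended derivation. You are also right that what this actually certifies is the lower bound $\omega_H(\G) \ge 1/1504$ (i.e.\ $1/1504$ is a valid testing ratio in the sense of \Dref{Testdef}), notwithstanding the corollary's ``at most'' phrasing, which mirrors the analogous corollary after \Tref{MainC1}.
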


\begin{cor}\label{finally1}
Let $X$ be the complete $3$-dimensional cubical complex.
Then the differential $\delta^2 \co C^2(X) \ra C^3(X)$ tests $Z^2(X)$
(each entry of the test requires~$6$ queries).
\end{cor}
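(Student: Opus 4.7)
The plan is to deduce the corollary from \Tref{MainC2} via the general testability/expansion framework of \Sref{sec:2}, together with the explicit description of $H^2(X)$ provided by \Tref{H2=}.

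First, I would identify the correct independent subspace $H \leq Z^2(X)$ representing the cohomology. By \Tref{H2=} we have $Z^2(X) = \sg{-\one,[-1],B^2(X)}$ and $B^2(X)$ contains neither $-\one$, nor $[-1]$, nor $-[-1]$ (since these three classes are precisely the non-trivial cosets of $B^2(X)$ in $Z^2(X)$). Therefore the four-element subspace $H = \sg{-\one,[-1]} = \set{\one,-\one,[-1],-[-1]}$ intersects $B^2(X)$ trivially, so $H$ is independent in the sense of \Sref{sec:2} and the restriction of $\theta$ to $H$ gives an isomorphism $H \isom H^2(X) \isom \mu_2\times\mu_2$. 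In particular, $B^2(X)\cdot H = Z^2(X)$.

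Next, I would translate \Tref{MainC2} into the expansion inequality demanded by \Dref{Testdef}. For any $g \in C^2(X)$, set $p = \norm{\delta^2 g}$. Applying \Tref{MainC2} produces $\theta,\pi \in \mu_2$ and $f \in C^1(X)$ such that $g \sim_{rp} \theta[\pi]\cdot\delta^1 f$ for some absolute constant $r<1504$. Taking $\alpha = \theta[\pi] \in H$, we then have $g \cdot \alpha \sim_{rp} \delta^1 f \in B^1(X)\cdot \set{\one} = B^2(X)$, whence
\[
\norm{[g\cdot\alpha]} \;\leq\; r\,\norm{\delta^2 g} \;=\; \omega^{-1}\norm{\delta^2 g}
\]
with $\omega = 1/r$. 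This is precisely the estimate featuring in \Dref{Testdef} and \Dref{Expdef}, so $\omega_H(\G) \geq 1/1504 > 0$ for the \scnd\ incidence geometry $\G$ of $X$.

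Now \Tref{LK} (``expansion implies testability'') applied to this $\G$, $H$, and any $\omega \leq 1/1504$ yields that $\delta^2$ tests $B^2(X)\cdot H = Z^2(X)$ with positive ratio; equivalently, the randomized procedure that picks a uniformly random $3$-cell $c$ and verifies $(\delta^2 g)_c = 1$ constitutes a one-sided $(\e,\omega\e)$-test for $Z^2(X)$ for every $\e > 0$. Finally, the query-count assertion is immediate from the definition of $\delta^2$ in \Sref{s:cc}: each value $(\delta^2 g)_c$ is the product of $g$ over the $2d = 6$ walls of the $3$-cell $c$, so every invocation of the test reads exactly $6$ entries of $g$. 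The essential content of the argument has already been done inside \Tref{MainC2}; no further probabilistic or combinatorial obstacle remains here beyond bookkeeping $H$ and matching it against \Dref{Testdef}.
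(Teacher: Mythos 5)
Your proposal is correct and follows essentially the same route as the paper: Corollary~\ref{finally1} is deduced from Theorem~\ref{MainC2} through the expansion/testability framework of Section~\ref{sec:2} (Definition~\ref{Testdef} and Theorem~\ref{LK}), taking the independent complement $H=\sg{-\one,[-1]}$ so that $B^2(X)\cdot H=Z^2(X)$ by Theorem~\ref{H2=}, with the $6$-query count coming from the six walls of a cube. Only cosmetic quibbles: write $\delta^1f\in\delta^1(C^1(X))=B^2(X)$ rather than ``$B^1(X)\cdot\set{\one}$'', but the argument is the paper's.
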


We can now give a precise formulation of \Trefs{maincomb}{maincomb2}, based on \Dref{Testdef}. Since $Z^2(X) = \pm B^2(\vec{X}) = \sg{\pm \one, [\pm 1], B^2(X)}$, the proofs follow from \Cref{finally1}.
\begin{cor}\label{2.1forreal}
The differential $\delta^2 \co C^2(X) \ra C^3(X)$ is a $6$-query test on functions $g \in C^2(X)$ for being of the form $g_{ijk\ell} = \pm f_{ij}f_{kj}f_{k\ell}f_{i \ell}$ for some $f \co X \times X \ra \mu_2$. More explicitly, for every $g \in C^2(X)$ there is $f$ such that $\|\pm \vec{\delta}^1f \cdot g\| \leq 1504 \|\delta^2g\|$.
\end{cor}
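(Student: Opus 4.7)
The plan is to derive this corollary directly from \Tref{MainC2} by converting the normal form $\theta \cdot [\pi] \cdot \delta^1 f_0$ produced there into the shape $\pm \vec{\delta}^1 h$ that parametrizes $Z^2(X) = \pm B^2(\vec{X})$ (\Cref{Z2B2}). First I would set $p = \norm{\delta^2 g}$ and invoke \Tref{MainC2} to obtain scalars $\theta, \pi \in \mu_2$ and a symmetric function $f_0 \in C^1(X)$ with $g \sim_{rp} \theta \cdot [\pi] \cdot \delta^1 f_0$ for some constant $r < 1504$.

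Next I would translate the right-hand side into the required form. Viewing $f_0 \in C^1(X)$ as the corresponding symmetric element of $\CvecI$ via the forgetful embedding $C^1(X) \hookrightarrow \CvecI$, one has $\delta^1 f_0 = \vec{\delta}^1 f_0$, and $\vec{\delta}^1$ is a group homomorphism because its defining formula \eq{del1def} is a pointwise product in $\mu_2$. By \Rref{thisone}, $[-1] = -\vec{\delta}^1 \psi$, where $\psi$ is the order function in $\CvecI$ (it lies in $\CvecI$ since $N\psi = -\one \in Z^1(X)$). I then split on $\pi$: if $\pi = +1$ take $h = f_0$; if $\pi = -1$ take $h = \psi f_0$, which lies in $\CvecI$ since $N(\psi f_0) = N\psi \cdot N f_0 = -\one \in Z^1(X)$. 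In either case, $\theta \cdot [\pi] \cdot \delta^1 f_0 = \pm \vec{\delta}^1 h$ for an appropriate sign, hence $\norm{g \cdot (\pm \vec{\delta}^1 h)} \leq r p < 1504 \norm{\delta^2 g}$, which is the stated inequality after identifying the unordered pair $h \co X \times X \ra \mu_2$ with the function $f$ in the statement.

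Finally, the $6$-query count follows directly from the definition $(\delta^2 g)_c = \prod_{s \prec c} g_s$ ranging over the $6$ square-walls of the $3$-cube $c$, so reading a single entry of $\delta^2 g$ requires exactly $6$ queries of $g$. The main obstacle in the full argument is not in the present reduction, which is essentially bookkeeping, but lies inside the proof of \Tref{MainC2}: the constant $r < 1504$ emerges from a ladder of applications of \Lref{manycond} used to select two pivot vertices $a_0, b_0$ at which all restrictions of the noisy functions $\Delta' g$ and $\Delta'' g$ are simultaneously controlled, followed by an invocation of \Tref{MainC1} on the restricted cubical complex $X^{a_0 b_0}$ to assemble~$f_0$.
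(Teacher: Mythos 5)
Your proposal is correct and follows essentially the same route as the paper: invoke \Tref{MainC2} to get $g \sim_{rp} \theta\,[\pi]\,\delta^1 f_0$ with $r<1504$, and then use the structural identity $Z^2(X)=\pm B^2(\vec{X})$ (concretely, $[-1]=-\vec{\delta}^1\psi$ from \Rref{thisone} together with multiplicativity of $\vec{\delta}^1$ and $\psi f_0\in\CvecI$) to rewrite the approximant as $\pm\vec{\delta}^1 h$, the $6$-query count coming from the six walls of a cube. This is exactly the paper's derivation of \Cref{2.1forreal} from \Tref{MainC2} via \Cref{finally1} and \Cref{Z2B2}/\Tref{H2=}, only spelled out in more detail.
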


\begin{cor}\label{2.2forreal}
The differential $\delta^2 \co C^2(X) \ra C^3(X)$ is a $6$-query test on functions $g \in C^2(X)$ for being of the form $g_{ijk\ell} = \pm [\pm 1] f_{ij}f_{kj}f_{k\ell}f_{i \ell}$ for some (symmetric) $f \in C^1(X)$. More explicitly, for every $g \in C^2(X)$ there is a symmetric $f$ such that $\|\pm [\pm 1] \delta^1f \cdot g\| \leq 1504 \|\delta^2g\|$.
\end{cor}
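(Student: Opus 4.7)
The plan is to view \Cref{2.2forreal} as the explicit combinatorial reformulation of \Cref{finally1}, obtained by substituting in the description of $Z^2(X)$ given by \Tref{H2=}. By \Cref{finally1}, the map $\delta^2 \co C^2(X) \ra C^3(X)$ tests $Z^2(X)$ with ratio $\omega = 1/1504$; in particular, by \Dref{Testdef} applied with the independent complement $H = \sg{-\one,[-1]} \leq Z^2(X)$, which is isomorphic to $H^2(X) \isom \mu_2\times\mu_2$ by \Tref{H2=}, for every $g \in C^2(X)$ there is some $\alpha \in H$ with $\|[g\cdot\alpha]\| \leq 1504 \,\|\delta^2 g\|$.

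The next step is to unwind the coset norm according to \Dref{simdef}, namely $\|[g\cdot\alpha]\| = \min_{f \in C^1(X)} \|g\cdot\alpha\cdot\delta^1 f\|$, where $f$ ranges over $C^1(X)$ and is therefore symmetric by definition of that space. Writing $\alpha = \theta\cdot[\pi]$ with $\theta,\pi\in\mu_2$, and using the fact that multiplication in $C^2(X)$ is an involution, the bound becomes
$$\|\theta\cdot[\pi]\cdot\delta^1 f\cdot g\| \leq 1504\,\|\delta^2 g\|,$$
which is exactly the inequality asserted in the corollary, with $\pm$ and $[\pm 1]$ chosen according to $\theta$ and $\pi$.

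Finally, the $6$-query claim is immediate from the definition of the cubical differential in \Sref{s:cc}: each entry $(\delta^2 g)_c$ on a $3$-cell~$c$ is the product of the values of $g$ on the six square walls of~$c$, so one evaluation requires exactly six queries to~$g$. The whole argument is a reassembly of previously established ingredients, so I do not expect any new obstacle here; the genuine content lies upstream, in \Tref{H2=} (which identifies $\sg{-\one,[-1]}$ as a concrete independent complement to $B^2(X)$ inside $Z^2(X)$, so that the correcting factor can be taken in the explicit two-parameter family $\theta\cdot[\pi]$) and in the expansion estimate of \Tref{MainC2} packaged into \Cref{finally1}.
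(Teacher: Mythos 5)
Your proposal is correct and follows essentially the same route as the paper, which derives \Cref{2.2forreal} from \Cref{finally1} (i.e.\ from the quantitative bound of \Tref{MainC2}) together with the decomposition $Z^2(X) = \sg{\pm\one,[\pm 1],B^2(X)}$ of \Tref{H2=}, exactly as you do by taking the independent complement $H=\sg{-\one,[-1]}$ and unwinding \Dref{Testdef} and \Dref{simdef}. The $6$-query observation is likewise the paper's, so nothing is missing.
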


\section{Proving testability in general}\label{sec:9}

The explicit constant in \Tref{MainC2} relies on \Lref{wefindm}, which requires combinatorial analysis  special to that particular case. A soft version, without an explicit constant, can be proved through a lemma on formal functions (\Dref{firstformaldef}).

\begin{lem}\label{L1}
Assume that $\phi \co C^d(X) \ra \Fun{k}$ is formal in the identity operator $C^d(X) \ra C^d(X)$.
Assume $\phi f = \one$ for every $f \in Z^d(X)$.
Then $\phi$ is formal in $\delta^d$.
\end{lem}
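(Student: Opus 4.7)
\textbf{Proof plan for \Lref{L1}.}
The idea is to translate formality into a statement of $\F_2$-linear algebra and apply duality. Identifying $\mu_2$ with $\F_2$ and writing multiplication additively, the assumption that $\phi$ is formal in the identity supplies, for every tuple $(v_1,\dots,v_k)$ with underlying set $V=\set{v_1,\dots,v_k}$, a fixed list of $d$-cells $u_1,\dots,u_\ell$ on $V$ such that
\[
(\phi f)_{v_1,\dots,v_k} \;=\; \sum_{i=1}^{\ell} f_{u_i}
\]
for every $f \in C^d(X)$. The right-hand side is a linear functional $L \in C^d(X)^*$ that depends only on $f|_V$; the hypothesis that $\phi f = \one$ on $Z^d(X)$ is exactly that $L$ vanishes on $Z^d(X)$.

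Next I would pass to the subcomplex $X|_V$. The restriction $r\co C^d(X)\ra C^d(X|_V)$ carries $Z^d(X)$ onto $Z^d(X|_V)$: any coboundary $\delta^{d-1}h$ on $X|_V$ lifts to $\delta^{d-1}\tilde h$ on $X$ for an arbitrary extension $\tilde h$ of $h$, and the finitely many nontrivial cohomology classes on $X|_V$, pinned down explicitly by the computations of \Srefs{sec:4}{sec:5}, are represented by universal cocycles such as $-\one$ and $[-1]$ that are already defined on all of $X$. Thus $L$ descends to a functional $\bar L\in C^d(X|_V)^*$ vanishing on $\ker(\delta^d|_V)$. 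By standard duality, $\bar L$ is the pullback along $\delta^d|_V$ of a functional on $\Im(\delta^d|_V)\subseteq C^{d+1}(X|_V)$, which in turn extends to a sum of point-evaluations at some $(d+1)$-cells $c_1,\dots,c_m$ with vertices in $V$. This yields the identity
\[
L(f) \;=\; \sum_{j=1}^{m} (\delta^d f)_{c_j}
\]
in $f \in C^d(X)$.

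Finally, applying a permutation $\sigma$ of $\cells[X]{0}$ to both sides (which is valid because $\delta^d$ is $\sigma$-equivariant and the identity holds in $f$) gives $(\phi f)_{\sigma(v_1),\dots,\sigma(v_k)} = \sum_j (\delta^d f)_{\sigma(c_j)}$, which in multiplicative notation is precisely the desired formality of $\phi$ in $\delta^d$, of length $m$. The main technical obstacle is the surjectivity of $r\co Z^d(X)\ra Z^d(X|_V)$ that guarantees the cells $c_j$ can indeed be chosen with vertices inside $V$; this is automatic for complete simplicial complexes by acyclicity, and in the low-dimensional cubical settings used here it follows directly from the cohomology computations of \Srefs{sec:4}{sec:5}.
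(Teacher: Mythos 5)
Your core argument is the same one the paper uses: over $\F_2$, a functional on $C^d(X)$ vanishing on $Z^d(X)=\Ker(\delta^d)$ lies in the span of the coordinate functionals $f \mapsto (\delta^d f)_c$, and the permutation-equivariance of $\delta^d$ then upgrades a single identity at the base tuple to formality. Where you diverge is the detour through the full subcomplex $X|_V$ on $V=\set{v_1,\dots,v_k}$, introduced to force the cells $c_1,\dots,c_m$ to have vertices inside $V$, as \Dref{firstformaldef} literally requires. The paper does not localize: it applies duality in the dual of all of $C^d(X)$, writes $\phi_v$ as a product of entries of $\delta^d$ at cells that may involve outside vertices, and then permutes. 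That weaker conclusion is all that is needed downstream, because \Lref{goon} only requires each $\s(c_j)$ to be uniformly distributed when $\s$ is a uniformly random permutation of $\cells[X]{0}$, which holds whether or not $c_j$ lies inside $V$ (formally one may enlarge the tuple by the extra vertices).

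The genuine gap in your route is the surjectivity of the restriction $Z^d(X) \ra Z^d(X|_V)$, which you need so that $\bar L$ annihilates all of $Z^d(X|_V)$ and not merely the restrictions of global cocycles. This is not among the hypotheses of the lemma and can fail for a general complex $X$ (if $X|_V$ carries a cocycle that is not the restriction of a global one, your descent step breaks, while the paper's global argument still goes through); and even in the complete cubical case the appeal to \Srefs{sec:4}{sec:5} is only literally available when $\card{V}\geq 10$, the standing assumption of \Sref{sec:5} used e.g.\ in \Pref{X3}, whereas the tuples to which this lemma is applied in \Sref{sec:9} can have as few as $8$ vertices (the case of $\Delta'\times\Delta'$). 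So either add the cocycle-extension property as an explicit hypothesis and verify it in the relevant range of $\card{V}$, or drop the localization and argue globally as the paper does, accepting cells outside $V$; the latter costs nothing in the probabilistic applications.
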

\begin{proof}
Let $v_1,\dots,v_k$ be the vertices from \Dref{firstformaldef}.
Write $v = (v_1,\dots,v_k)$.

Let $\phi_v \co C^d(X) \ra \mu_2$ be the function defined by $\phi_vf = (\phi f)_v$. View $C^d(X)$ as a vector space over the field of two elements, and let $V^*$ be the subspace of the dual space of $C^d(X)$ spanned by the functionals
$f \mapsto (\delta^df)_c$, where $c$ ranges over the $d$-cells of $X$. By definition $\psi \in V^*$ if and only if $\psi f= 1$ for every
$f \in Z^d(X)$. Therefore, by assumption, $\phi_v \in V^*$. It follows that $\phi_v$ is a product of, say, $m$~entries of $\delta^d(\cdot)$. The desired expression is obtained by permuting the vertices.
\end{proof}

We now outline a proof for testability in complexes of higher dimension.
We say that a system of homomorphisms $\Delta_i \co C^d(X) \ra \Fun{k_i}$ ($i = 1,\dots,u$) {\bf{induce a map from $H^d(X)$}}, if each $\Delta_i g$ is a constant function for $g \in Z^d(X)$, and this constant function is $\one$ for $g \in B^d(X)$. Indeed in this case we obtain a
homomorphism $\tilde{\Delta} \co H^d(X) \ra H = (\mu_2)^{u}$.
\begin{thm}
Suppose there are formal functions $\Delta_i \co C^d(X) \ra \Fun{k_i}$
inducing an isomorphism $\tilde{\Delta} \co H^d(X) \ra H$,
and a map $\nabla \co H \ra Z^d(X)$ such that $\nabla \circ \tilde{\Delta}$ splits the short exact sequence
$$\xymatrix@C=10pt{1 \ar@{->}[rr] & & B^d(X) \ar@{^(->}[rr] & & Z^d(X) \ar@<0pt>@{->}[rr]|{\theta} &&  H^d(X) \ar@{->}[dl]|{\,\tilde{\Delta}}
\ar@{->}[rr] && 1, \\
{} && {} && {} & H \ar@{..>}[ul]|{\nabla} 
& &
}$$
in a way that for some constant~$r$, if $\delta^d g \sim_p \one$ and $\Delta_i g \sim_p \one$ for every~$i$ then there is $f \in C^{d-1}(X)$ such that $g \sim_{rp} \delta^{d-1}f$.
Then $B^d(X) \cdot \Delta(H) \leq C^d(X)$ is testable (with respect to the constant $\omega = r^{-1}$).
\end{thm}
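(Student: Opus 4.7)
The plan is to use the hypothesized formal cocycle detectors $\Delta_i$ to correct $g$ by an element of $\nabla(H)$ \emph{before} invoking the main hypothesis, so that the assumption ``$\Delta_i g \sim_p \one$ for every $i$'' is achieved after correction and the coset $[g\alpha]$ is then close to $B^d(X)$.

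First, I would verify that each map $\Delta_i \times \Delta_i$ is formal in $\delta^d$, of some length $m_i$. Since $\Delta_i$ is formal in the identity operator on $C^d(X)$, so is the map $f \mapsto (\Delta_i f) \times (\Delta_i f)$ (viewed as a function on distinct $2k_i$-tuples, which covers all but a negligible fraction of inputs). For $f \in Z^d(X)$, the $\Delta_i$ induce an isomorphism to $H$, so $\Delta_i f$ is a constant function in $\mu_2$, and its pointwise square is $\one$; hence $\Delta_i \times \Delta_i$ vanishes on $Z^d(X)$, and \Lref{L1} applies. Combining \Lref{goon} with \Lref{half}(\ref{halfi}), the assumption $\delta^d g \sim_p \one$ then yields $\Delta_i g \sim_{m_i p} \pi_i$ for some constants $\pi_i \in \mu_2$, one for each $i$.

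Next, I would set $\alpha = \nabla(\pi_1, \dots, \pi_u) \in Z^d(X)$. The splitting hypothesis, $\theta \circ (\nabla \circ \tilde\Delta) = \mathrm{id}_{H^d(X)}$, is equivalent to $\tilde\Delta \circ \theta \circ \nabla = \mathrm{id}_H$, and unpacks to the concrete identity $\Delta_i \alpha = \pi_i$ for every $i$ (viewed as constant functions). Since each $\Delta_i$ is a homomorphism and $\alpha \in Z^d(X)$, the corrected function $g' = g \cdot \alpha$ satisfies $\delta^d g' = \delta^d g \sim_p \one$ and $\Delta_i g' = (\Delta_i g) \cdot \pi_i \sim_{m_i p} \one$. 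Writing $m = \max_i m_i$, both sets of conditions hold at level $mp$, so by the theorem's hypothesis there is $f \in C^{d-1}(X)$ with $g' \sim_{rmp} \delta^{d-1} f$. Hence $\norm{[g\alpha]} \leq \norm{g\alpha \cdot \delta^{d-1}f} \leq rm \cdot \norm{\delta^d g}$, which establishes the expansion bound of \Dref{Expdef} for the independent subspace $\nabla(H)$ with constant $(rm)^{-1} > 0$. Testability of $B^d(X) \cdot \nabla(H)$ then follows from \Tref{LK}.

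The only real obstacle is bookkeeping of constants: the statement advertises the ratio $\omega = r^{-1}$, while the argument produces the slightly weaker $(rm)^{-1}$, where $m$ absorbs the formal lengths of the $\Delta_i \times \Delta_i$ in $\delta^d$. Since $m$ is determined by the (fixed) combinatorial presentation of the $\Delta_i$, this is a cosmetic adjustment, and the conclusion of testability, which only requires a positive lower bound on the expansion constant, is unaffected. The subtler point is the interpretation of ``$\nabla \circ \tilde\Delta$ splits the sequence,'' which one must read as providing a concrete lift whose $\Delta_i$-values reproduce the chosen element of $H$; this is precisely the identity $\Delta_i\alpha = \pi_i$ used in the correction step.
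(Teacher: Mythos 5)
Your proposal is correct and follows essentially the same route as the paper's own proof: formality of $\Delta_i\times\Delta_i$ via \Lref{L1}, the majority-vote constants $\pi_i$ from \Lref{goon} and \Lref{half}, correction by $\nabla(\pi_1,\dots,\pi_u)$, and then the main hypothesis followed by \Tref{LK}. Your bookkeeping remark is in fact a point the paper glosses over --- its proof also only yields $g \sim_{rmp}\delta^{d-1}f$ after the $\Delta_i$-conditions are established at level $mp$, so the honest ratio is $(rm)^{-1}$ rather than the advertised $r^{-1}$; this is harmless for the qualitative testability conclusion, exactly as you say.
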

\begin{proof}
Let $g \in C^d(X)$ be a function satisfying $\delta^d g \sim_p \one$.
By assumption each $\Delta_i$ is formal, and constant on $Z^d(X)$.
Therefore $\Delta_i \times \Delta_i$ satisfies the conditions of \Lref{L1}, where the entries of $\delta^d$ are uniformly random when the permutation is applied. So for suitable~$m$,  
$\Delta_ig \times \Delta_ig \sim_{mp} \one$. By \Lref{half}(\ref{halfi}), there are constants $\theta_i$ such that $\Delta_ig \sim_{mp} \theta_i$. Replacing $g$ by $g\cdot \nabla(\theta_1,\dots,\theta_u)$, we obtain a function satisfying $\delta^d g \sim_p \one$ and all the conditions $\Delta_i g \sim_{mp} \one$. By assumption, there is now $f \in C^{d-1}(X)$ such that $g \sim_{rp} \delta^{d-1}f$ for a constant~$r$.
\end{proof}

This is the method proving \Trefs{MainC1}{MainC2}.

\end{document}